\numberwithin{equation}{section}
\theoremstyle{plain}
\newtheorem{theorem}{Theorem}[section]
\newtheorem{lemma}[theorem]{Lemma}
\newtheorem{corollary}[theorem]{Corollary}
\theoremstyle{remark}
\newtheorem{remark}[theorem]{Remark}
\theoremstyle{definition}
\newcommand{\abs}[1]{\lvert#1\rvert}
\DeclareMathOperator{\Lapl}{\Delta}
\newcommand{\norm}[1]{\lVert#1\rVert}
\newcommand{\norma}[2]{\norm{#1}_{#2}}
\newcommand{\SpDim}{N}
\newcommand{\numberspacefont}{\boldsymbol}
\newcommand{\R}{\numberspacefont{R}}
\newcommand{\RN}{\R^{\SpDim}}
\newcommand{\Pposbase}[3][*]{\ifthenelse{\equal{#1}{*}}%
{(#2)_{#3}}{\left(#2\right)_{#3}}}
\newcommand{\ppos}[1]{\Pposbase[*]{#1}{+}}
\newcommand{\pneg}[1]{\Pposbase[*]{#1}{-}}
\newcommand{\di}{\,\textup{\textmd{d}}}
\newcommand{\eps}{\varepsilon}
\newcommand{\unk}{u}
\newcommand{\pder}[2]{\frac{\partial #1}{\partial #2}}
\newcommand{\bdr}[1]{\partial #1}
\newcommand{\grad}{\operatorname{\nabla}}
\DeclareMathOperator{\supp}{supp}
\newcommand{\der}[2]{\frac{\di #1}{\di #2}}
\newcommand{\vol}{V}
\newcommand{\dnf}{\rho}
\newcommand{\msr}{\mu}
\newcommand{\msd}{\mu_{\dnf}}
\newcommand{\ipf}{\omega}
\newcommand{\iso}{h}
\newcommand{\isoexp}{\frac{\SpDim-1}{\SpDim}}
\newcommand{\lov}{R}
\newcommand{\vold}{\vol_{\dnf}}
\newcommand{\dlov}{\lov_{\dnf}}
\newcommand{\measd}{\nu_{\dnf}}
\newcommand{\fspf}{\tilde Z}
\newcommand{\fpsf}{\psi}
\newcommand{\Ha}{\mathcal{H}}
\newcommand{\Ka}{\mathcal{K}}
\newcommand{\arn}{\omega_\SpDim}
\newcommand{\smo}{\textup{\textmd{o}}}
\begin{document}

\title
[Equations on inhomogeneous manifolds]{Asymptotic properties of solutions to the Cauchy problem for
  degenerate parabolic equations with inhomogeneous density on manifolds}%
\author{Daniele Andreucci}
\address{Department of Basic and Applied Sciences for Engineering\\Sapienza University of Rome\\via A. Scarpa 16 00161 Rome, Italy}
\email{daniele.andreucci@sbai.uniroma1.it}
\thanks{The first author is member of the Gruppo Nazionale
  per la Fisica Matematica (GNFM) of the Istituto Nazionale di Alta Matematica
  (INdAM)}
\author{Anatoli F. Tedeev}
\address{South Mathematical Institute of VSC RAS\\Vladikavkaz, Russian Federation}
\email{a\_tedeev@yahoo.com}
\thanks{The second author was supported by Sapienza Grant C26V17KBT3}
\thanks{Keywords: Doubly degenerate parabolic equation, noncompact Riemannian manifold, inhomogeneous density, interface blow up, optimal decay estimates.\\AMS Subject classification: 35K55, 35K65, 35B40.}


\date{\today}
\begin{abstract}
  We consider the Cauchy problem for doubly nonlinear degenerate
  parabolic equations with inhomogeneous density on noncompact
  Riemannian manifolds. We give a qualitative classification of the
  behavior of the solutions of the problem depending on the behavior
  of the density function at infinity and the geometry of the
  manifold, which is described in terms of its isoperimetric
  function. We establish for the solutions properties as:
  stabilization of the solution to zero for large times, finite speed
  of propagation, universal bounds of the solution, blow up of the
  interface. Each one of these behaviors of course takes place in a
  suitable range of parameters, whose definition involves a
  universal geometrical characteristic function, depending both on the
  geometry of the manifold and on the asymptotics of the density at infinity.
\end{abstract}

\maketitle

\section{Introduction}\label{s:intro}
\label{s:stat}
We consider the Cauchy problem
\begin{alignat}{2}
  \label{eq:pde}
  \dnf(x)
  \unk_{t}
  -
  \Lapl_{p,m}
  (\unk)
  &=
  0
  \,,
  &\qquad&
  x\in M
  \,,
  t>0
  \,,
  \\
  \label{eq:initd}
  \unk(x,0)
  &=
  \unk_{0}(x)
  \,,
  &\qquad&
  x\in M
  \,.
\end{alignat}
Here $M$ is a complete Riemannian manifold of topological dimension
$\SpDim$, with infinite volume. In local coordinates
$x^{i}$, we denote
\begin{equation*}
  \Lapl_{p,m}(u)
  =
  \frac{
    1
  }{
    \sqrt{\det(g_{ij})}
  }
  \sum_{i,j=1}^{\SpDim}
  \pder{}{{x^{i}}}
  \Big(
  \sqrt{\det(g_{ij})}
  g^{ij}
  \unk^{m-1}
  \abs{\grad\unk}^{p-2}
  \pder{\unk}{x^{j}}
  \Big)
  \,,
\end{equation*}
where $(g_{ij})$ denotes the Riemannian metric, $(g^{ij})=(g_{ij})^{-1}$,
$\grad \unk$ is the gradient with respect to $(g_{ij})$, and
\begin{equation*}
  \abs{\grad u}^{2}
  =
  \sum_{i,j=1}^{\SpDim}
  g^{ij}
  \pder{\unk}{{x^{j}}}
  \pder{\unk}{{x^{i}}}
  \,.
\end{equation*}

We always assume that $1<p<\SpDim$, and that either
\begin{equation}
  \label{eq:pde_deg}
  p+m-3>0
  \,,
\end{equation}
or
\begin{equation}
  \label{eq:pde_sin}
  p+m-3<0
  \,,
  \qquad
  m>0
  \,.
\end{equation}

In this paper we follow an approach ultimately based on the classical DeGiorgi estimates (\cite{DiB:dpe,LSU}); a new technical tool is the weighted Faber-Krahn type inequality of Lemmas \ref{l:fk} and \ref{l:fks}. This inequality takes into account the asymptotic behaviors both of the density function $\dnf$ and of the volume growth of the manifold at infinity. We use the isoperimetrical properties of the manifold, which also allow us to prove new embedding results which we think are of independent
interest.

We establish in the slow diffusion case \eqref{eq:pde_deg}, and also in the fast diffusion case \eqref{eq:pde_sin} under additional assumptions, the decay rate for large times of nonnegative solutions, for initial data of finite mass. In the degenerate case we also estimate the finite speed of propagation for the support of solutions with a bounded support. These results apply in a subcritical case where, roughly speaking, the density function decays not too fast at infinity. Where we have explicit solutions, that is in the Euclidean case, our estimates reduce to the known optimal ones.

Still in the slow diffusion case, but when the density function decays fast enough, we investigate the behavior of the solutions for large $t$; we obtain under different assumptions a universal bound for solutions and a result of interface blow up. The universal bound is the same as in the Euclidean case, as expected; see also Subsection~\ref{s:exam}.

The various cases recalled above are discriminated in terms of the
behavior of a universal function involving the density and the volume
growth of the manifold (see Remark~\ref{r:decayf}).

The interest of this problem appeared first in the case $M=\R^{3}$ with
the Euclidean metric, where \cite{Kamin:Rosenau:1981},
\cite{Rosenau:Kamin:1982} obtained the first surprising results, in symmetric
cases, on the qualitative properties of solutions to the
porous media equation with inhomogeneous density. The interface blow up in the Euclidean setting
was first shown in \cite{Kamin:Kersner:1993};
\cite{Tedeev:2007} extended it to a wide class of doubly degenerate parabolic
equations.

Let us here explicitly recall the behavior of solutions in the
Euclidean case, when $\dnf(x)=(1+\abs{x})^{-\alpha}$, $x\in\RN$, for a
given $0<\alpha\le \SpDim$. Such a behavior strongly depends on the
interplay between the nonlinearities appearing in the equation, at
least from a two-fold point of view, that is both as far as the sup
bounds for solutions are concerned, and then also if we consider the
property of finite speed of propagation, see \cite{Tedeev:2007} for
the following results; see also \cite{Dzagoeva:Tedeev:2018}.
\\
Concerning the first issue, the limiting threshold is $\alpha=p$: in
the range $\alpha\le p$ the sup estimates for solutions keep the same
structure as the ones known for the homogeneous doubly nonlinear
equation (say with $\alpha=0$ above), although of course the exact
dependence on the parameters of the problem is not the same. More
explicitly, the solution is bounded above by the product of a negative
power of time and a certain positive power of the initial
mass. Instead in the range $\alpha>p$ one can prove a universal bound:
a suitable negative power of time still is present, but the initial
mass disappears from the estimate.
\\
Concerning the second issue of finite speed of propagation, which is
connected to conservation of mass, a second explicit threshold
$\alpha_{*}\in(p,\SpDim)$ appears. We assume clearly that the initial
data has compact support. Then in the subcritical range
$\alpha<\alpha_{*}$ the property of finite speed of propagation is
preserved for all times, i.e., the solution is compactly supported for
$t>0$. As a result, also the property of conservation of mass is valid
for all times. In the supercritical range $\alpha>\alpha_{*}$ the
evolution of the support is quite different: finite
speed of propagation (that is boundedness of support) and
conservation of mass can not hold true for all $t>0$.

In Subsection~\ref{s:exam} below we present a class of examples and a
more detailed comparison with the Euclidean case. Before describing
the results of this paper, we recall that parabolic
problems in a Euclidean metric with
inhomogeneous density were studied in \cite{Martynenko:Tedeev:2007},
\cite{Martynenko:Tedeev:2008} (blow up phenomena); \cite{Reyes:Vazquez:2009},
\cite{Kamin:Reyes:Vazquez:2010} (asymptotic expansion of the
solution of the porous media equation); \cite{Nieto:Reyes:2013},
\cite{Iagar:Sanchez:2013} (critical case).  We
quote for results related to ours
\cite{Grillo:Muratori:2013,Grillo:Muratori:Porzio:2013,Punzo:2011} for
the porous media equation and \cite{Duzgun:Mosconi:Vespri:2019} for
anisotropic operators. Still on the subject of porous media like
equations on Riemannian manifolds, besides the seminal papers
\cite{Punzo:2012b, Punzo:2012c} on the properties of the support of
solutions, we also quote \cite{Grillo:Muratori:2016,Vazquez:2015}.

The main goal of the present paper is to investigate
the behavior of solutions around the threshold discriminating between
the cases described above for the sup estimates, in terms of the
density function $\dnf$, the nonlinearities in the equation, and of
course the Riemannian geometry of $M$.
\\
Our results on the subcritical
$L^{\infty}$ bound in fact apply when we, clearly, are in the
subcritical case, but not too far from the threshold. In the Euclidean
example just discussed, i.e., $\alpha$ should be not too small, which
is not restrictive in the light of the purpose of this paper. On the
other hand, we provide a unified approach to both the degenerate
\eqref{eq:pde_deg} and the singular \eqref{eq:pde_sin} cases. The case
where we are far below the threshold calls for a different approach;
this will be the subject of a forthcoming paper. 
\\
Such a requirement of closeness to the threshold is not needed by the other results.
\\
See also
\cite{Andreucci:Cirmi:Leonardi:Tedeev:2001} for the Euclidean case; we
borrow the energetic setting of
\cite{Andreucci:Tedeev:1999,Andreucci:Tedeev:2005,Andreucci:Tedeev:2015,Andreucci:Tedeev:2017}; see also \cite{Tedeev:1993}.

\subsection{Assumptions.}
\label{s:ass_main}

In what follows $d(x)$ denotes the geodesic distance of $x$ from a fixed point $x_{0}\in M$, $\abs{U}$ is the Riemannian volume of $U\subset M$, and $\abs{\bdr{U}}_{\SpDim-1}$ the corresponding area of its boundary.

An important role is played by the function
\begin{equation*}
  \vol(R)
  =
  \abs{B_{R}}
  \,,
  \qquad
  B_{R}
  =
  \{
  x\in M
  \mid
  d(x)\le R
  \}
  \,.
\end{equation*}
On the geometry of the manifold $M$ we need the following requirements, of isoperimetrical character. We assume that for all bounded and Lipschitz domains $U\subset M$
\begin{equation}
  \label{eq:M_iso}
  \abs{\bdr{U}}_{\SpDim-1}
  \ge
  \iso(\abs{U})
  \,,
  \qquad
  s\mapsto
  \ipf(s)
  =
  \frac{s^{\frac{\SpDim-1}{\SpDim}}}{\iso(s)}
  \text{ is nondecreasing in $(0,+\infty)$.}
\end{equation}
Here $h:[0,+\infty)\to[0,+\infty)$ is a continuos nondecreasing
function, with $h(0)=0$. Isoperimetrical inequalities are in some sense equivalent to embedding theorems, see \cite{Mazja:ss}.

We list here all the assumptions required in
the following; all of them are needed for the subcritical sup estimate, while the
other results employ only a subset of such hypotheses.

\subsubsection{Volume growth conditions.} We require the growth conditions
\begin{equation}
  \label{eq:M_grow}
  c
  \iso(\vol(R))
  \le
  \der{\vol}{R}(R)
  \,,
  \qquad
  R>0
  \,,
\end{equation}
and its counterpart
\begin{equation}
  \label{eq:M_growup}
  \der{\vol}{R}(R)
  \le
  c^{-1}
  \iso(\vol(R))
  \,,
  \qquad
  R>0
  \,,
\end{equation}
for a given $0<c<1$.

The following condition of non-parabolicity of the manifold is needed to prove global embedding results:
\begin{equation}
  \label{eq:M_phyp}
  \int_{0}^{k}
  \frac{\di t}{\vol^{(-1)}(t)^{p}}
  \le
  c^{-1}
  \frac{k}{\vol^{(-1)}(k)^{p}}
  \,,
  \qquad
  k>0
  \,.
\end{equation}

In some cases we need that $R^{\SpDim}/\vol(R)$ is nondecreasing, which is implied by the assumption
\begin{equation}
  \label{eq:M_inc}
  \der{\vol}{R}(R)
  \le
  \SpDim
  \frac{\vol(R)}{R}
  \,.
\end{equation}
In addition we require
\begin{equation}
  \label{eq:M_isoup}
  \frac{\vol(R)}{R}
  \le
  c^{-1}
  \iso(V(R))
  \,,
  \qquad
  R>0
  \,.
\end{equation}

\subsubsection{Density decay conditions.}
In order to get the subcritical sup estimate, the density function $\dnf$ is required to satisfy:
\begin{gather}
  \label{eq:dnf_dec}
  s\mapsto \dnf(s)s^{\alpha_{1}}
  \text{ is nonincreasing for $s\in(1,+\infty)$;}
  \\
  \label{eq:dnf_inc}
  s\mapsto \dnf(s)s^{\alpha_{2}}
  \text{ is nondecreasing for $s\in(1,+\infty)$,}
\end{gather}
where $0<\alpha_{1}<\alpha_{2}<p$ are given constants.

Note that \eqref{eq:M_phyp}, \eqref{eq:dnf_inc} imply that the function $\vold(R)=\dnf(R)\vol(R)$ is bounded from above and below by constant multiples of the same increasing function (see Lemma~\ref{l:aux_density} below); however we need state the more precise assumption
\begin{equation}
  \label{eq:dnf_vol}
  \vold(R)
  =
  \dnf(R)
  \vol(R)
  \text{ is increasing for $R\in(0,+\infty)$.}
\end{equation}
We denote the inverse function of $\vold$ by $\dlov$. This hypothesis will be assumed implicitly throughout.

Finally, as we remarked above, we need to be not too far from the threshold, in Theorem~\ref{t:sup};
exactly, this means that, on setting $p^{*}=\SpDim p/(\SpDim-p)$,
\begin{equation}
  \label{eq:close}
  s
  \mapsto
  \dnf(s)
  \ipf(V(s))^{p^{*}}
  \,,
  \text{ is nonincreasing in $s>s_{0}$,}
\end{equation}
for a suitably chosen $s_{0}>0$. 

\begin{remark}
  \label{r:phyp}
  One can see easily that assumption \eqref{eq:M_phyp} is a consequence of the following alternative assumption: $R\mapsto \vol(R)/R^{q}$ is nondecreasing for $R>0$ for some $q>p$.
  \\
  Assumption \eqref{eq:M_phyp} is not merely a technical device; if it fails we do not expect decay of solutions for large times, see \cite{Kamin:Rosenau:1981}.
\end{remark}

\begin{remark}
  \label{r:decayf}
  Consider the functions
  \begin{gather}
    \label{eq:decayf_b}
    \fpsf(R)
    =
    \vold(R)^{p+m-3}
    \dnf(R)
    R^{p}
    \,,
    \quad
    R>0
    \,;
    \\
    \label{eq:decayf_a}
    b_{1}(s)
    =
    \fpsf(\dlov(s))
    =
    s^{p+m-3}
    \dnf(\dlov(s))
    \dlov(s)^{p}
    \,,
    \quad
    s>0
    \,.
  \end{gather}
  Owing to our assumptions, $b_{1}$ is increasing if and only if $\fpsf$ is. In turn, this is automatically satisfied in the degenerate case \eqref{eq:pde_deg}, at least if \eqref{eq:dnf_inc} is assumed, but it is not necessarily valid in the singular case \eqref{eq:pde_sin}. However, when $\fpsf$ is increasing we denote its inverse by $\fspf$; the latter function provides in some cases an estimate of the finite speed of propagation of the support of a solution.
\end{remark}

The definition of weak solution to \eqref{eq:pde}--\eqref{eq:initd} is in fact standard; the problem is given the integral formulation
\begin{equation}
  \label{eq:weaksol}
  \int_{0}^{+\infty}
  \int_{M}
  \{
  -
  u
  \dnf
  \zeta_{t}
  +
  u^{m-1}
  \abs{\grad u}^{p-2}
  \grad u
  \grad \zeta
  \}
  \di\msr
  \di t
  =
  \int_{M}
  u_{0}
  \dnf
  \zeta(x,0)
  \di\msr
  \,,
\end{equation}
for all $\zeta\in C^{1}(M\times[0,+\infty))$, with bounded support.
In general, the existence can be proved following the methods of \cite{AubinNPRG,Bernis:1988a} in the setting of energy solutions, i.e., assuming $\sqrt{\dnf}u_{0}\in L^{2}(M)$. The solution obtained satisfies $u\in L^{\infty}_{\textup{loc}}(M\times(0,+\infty))$, $u\in C((0,T);L^{2}(M))$, $u^{m-1}\abs{\grad u}^{p}\in L^{1}(M\times (0,T))$ for all $T<+\infty$. In the subcritical cases where the assumptions of Theorem~\ref{t:sup} are in force, in fact we can prove existence even for initial data which are Radon measures, such that $\dnf u_{0}$ has finite mass in $M$. This follows from the estimate in Theorem~\ref{t:sup} and from a standard approximation procedure via a sequence of solutions to initial-value boundary problems, with vanishing Dirichlet data, in a sequence of invading compact domains. In the latter case however the solution satisfies $u\in L^{\infty}_{\textup{loc}}(M\times(0,+\infty))$, $u\in C((0,T);L^{2}_{\textup{loc}}(M))$, $u^{m-1}\abs{\grad u}^{p}\in L^{1}_{\textup{loc}}(M\times (0,T))$ for all $T<+\infty$. In addition, the proof requires an estimate of the $L^{1}$ norm of $u^{m-1}\abs{\grad u}^{p-1}$ up to time $t=0$ (sometimes called an entropy estimate); this bound can be achieved following \cite{Andreucci:Tedeev:1998,Andreucci:Tedeev:2000}, and again using Theorem~\ref{t:sup}.
\\
The uniqueness of solutions for problems of the kind we consider is well known to be a difficult problem in general; see e.g., the seminal paper \cite{Eidus:Kamin:1994}, and more recently \cite{Punzo:2012}.

Thus in the following theorems, we refer to a solution $u$ constructed by approximation as shown above. Also, in the proofs for simplicity we work sometimes with a strong almost everywhere formulation of the differential equation, to avoid the by now standard regularization arguments. We denote by $\gamma$, $\gamma_{0}$, \dots, constants (varying from line to line) depending only on the parameters of the problem.

\begin{remark}
  \label{r:mass_subcr}
  Since we can limit the $L^{1}(M)$ norm
  of each such approximation only in terms of the initial mass,
  passing to the limit we infer for the solution referred to just above
  \begin{equation}
    \label{eq:mass_subcr_a}
    \norma{\unk(t)\dnf}{L^{1}(M)}
    \le
    \gamma
    \norma{\unk_{0}\dnf}{L^{1}(M)}
    \,,
    \qquad
    0<t<+\infty
    \,.
  \end{equation}
  Here $\gamma$ depends on $M$ and $\dnf$, but not on $u$.
  Notice that this bound follows without assuming finite speed of
  propagation. However, it is easy to prove that if the support of the solution is bounded over $[0,T]$, mass is conserved exactly, that is
  \begin{equation}
    \label{eq:mass_subcr_ab}
    \norma{\unk(t)\dnf}{L^{1}(M)}
    =
    \norma{\unk_{0}\dnf}{L^{1}(M)}
    \,,
    \qquad
    0<t<T
    \,.
  \end{equation}
\end{remark}

\begin{remark}
  \label{r:dnf}
  It follows without difficulty from our arguments that the radial
  character and the assumptions on $\dnf$ can be replaced by
  analogous statements on a radial function $\tilde \dnf$ such that
  \begin{equation*}
    c
    \tilde\dnf(x)
    \le
    \dnf(x)
    \le
    c^{-1}
    \tilde \dnf(x)
    \,,
    \qquad
    x\in M
    \,,
  \end{equation*}
  for a given $0<c<1$.
\end{remark}

\subsection{Main results.}
\label{s:main}

We begin with the subcritical sup estimate.
\begin{theorem}
  \label{t:sup}
  Assume \eqref{eq:M_iso}--\eqref{eq:close}. We also assume one of the following: i) $p+m-3<0$, $m>0$,  $\fpsf$ as in \eqref{eq:decayf_b} is increasing with $\fpsf(0+)=0$, $\fpsf(R)\to+\infty$ as $R\to+\infty$, and 
  \begin{equation}
    \label{eq:sup_n}
    (\SpDim-\alpha_{1})(p+m-3)
    +
    p-\alpha_{2}
    >0
    \,;
  \end{equation}
  ii) $p+m-3>0$ (in this case the other conditions in i) are automatically satisfied).
  \\
  Let $\dnf u_{0}\in L^{1}(M)$. Then for $\fspf=\fpsf^{(-1)}$,
  \begin{equation}
    \label{eq:sup_m}
    \norma{u(t)}{\infty}
    \le
    \frac{
      \norma{\dnf u_{0}}{1}
    }{
      \vold\big(\fspf(\gamma_{0} t \norma{\dnf u_{0}}{1}^{p+m-3})\big)
    }
    \,,
    \qquad
    t>0
    \,,
  \end{equation}
  for a constant $\gamma_{0}>0$ independent of $u$.
\end{theorem}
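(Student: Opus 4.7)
The plan is to combine a De Giorgi / Moser truncation argument with the weighted Faber--Krahn inequality of Lemma~\ref{l:fk}, in the same style used by the authors in the Euclidean setting (cf.\ \cite{Andreucci:Cirmi:Leonardi:Tedeev:2001,Andreucci:Tedeev:2005}). Fix $t>0$. For a level $k>0$ to be chosen and integers $n\ge 0$, set $k_n=k(1-2^{-n})$, $t_n=t(1-2^{-n-1})/2$, and $\theta=(p+m-1)/p$, and define
\begin{equation*}
  I_n
  =
  \sup_{\tau\in[t_n,t]}\int_{M}\dnf\,(\unk-k_n)_+^{2}(\tau)\di\msr
  +
  \int_{t_n}^{t}\!\!\int_{M}\abs{\grad(\unk-k_n)_+^{\theta}}^{p}\di\msr\di\tau.
\end{equation*}
Testing~\eqref{eq:weaksol} by a regularization of $(\unk-k_n)_+$, multiplied by a smooth time cutoff vanishing before $t_{n-1}$ and equal to one after $t_n$, and using the identity $\unk^{m-1}\abs{\grad\unk}^{p}=\theta^{-p}\abs{\grad\unk^{\theta}}^{p}$, gives the standard caloric energy inequality
\begin{equation*}
  I_n
  \le
  \gamma\,\frac{2^{n}}{t}
  \int_{t_{n-1}}^{t}\!\!\int_{M}\dnf\,(\unk-k_{n-1})_+^{2}\di\msr\di\tau.
\end{equation*}

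The next step is to bound the right-hand side above by a power of $I_{n-1}$ using Lemma~\ref{l:fk}. Applied to $v=(\unk-k_{n-1})_+^{\theta}$, together with a weighted H\"older interpolation between the $L^{p}$ gradient bound and the $L^{2}_{\dnf}$ bound contained in $I_{n-1}$, and Chebyshev's inequality applied to the weighted volume of $\{\unk>k_n\}$ in the form
\begin{equation*}
  \abs{\{\unk>k_n\}}_{\dnf}
  \le
  (k\,2^{-n})^{-r}\int_{M}\dnf\,(\unk-k_{n-1})_+^{r}\di\msr,
\end{equation*}
the standard computation produces a recursive estimate $I_n\le\gamma\,b^{n}\,k^{-\lambda}\,I_{n-1}^{1+\sigma}$, with $b>1$, $\lambda>0$, $\sigma>0$ depending only on $p$, $m$, $\SpDim$, $\alpha_1$, $\alpha_2$ and the structural constants of the manifold. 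By the De Giorgi geometric iteration lemma, $I_n\to 0$ and hence $\unk(\cdot,t)\le k$ almost everywhere, provided $I_0\le\gamma^{-1}b^{-1/\sigma^{2}}k^{\lambda/\sigma}$. Running the iteration for the a priori bounded approximating solutions described after~\eqref{eq:weaksol}, and using the universal mass bound~\eqref{eq:mass_subcr_a} of Remark~\ref{r:mass_subcr} to control $I_0$ in terms of $M_0:=\norma{\dnf\unk_0}{1}$ and $t$, the smallness condition for $k$ translates, once one uses $\fpsf(R)=\vold(R)^{p+m-3}\dnf(R)R^{p}$ and $\fspf=\fpsf^{(-1)}$, exactly into the right-hand side of~\eqref{eq:sup_m}; passing to the limit in the approximation recovers the claimed bound.

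The main obstacle is to verify that the recursion closes with a positive exponent $\sigma>0$. This is precisely the role of the subcriticality hypothesis~\eqref{eq:close}: each invocation of Lemma~\ref{l:fk} on a function supported in $\{\unk>k_{n-1}\}$ contributes a Faber--Krahn constant scaling like $\dnf(\dlov(s))\ipf(s)^{p^{*}}$, with $s$ equal to the weighted volume of the support, and~\eqref{eq:close} is exactly the monotonicity required to absorb the dependence on $s$ uniformly along the iteration, by evaluating the constant at a single radius determined by the initial mass. Condition~\eqref{eq:sup_n} is only needed in case~i), where it guarantees the monotonicity of $\fpsf$ so that $\fspf$ is well defined and $\fpsf(0+)=0$, $\fpsf(\infty)=\infty$; in case~ii) the slow-diffusion assumption~\eqref{eq:pde_deg}, combined with~\eqref{eq:dnf_inc}--\eqref{eq:dnf_vol}, renders both monotonicities automatic, as already observed in Remark~\ref{r:decayf}.
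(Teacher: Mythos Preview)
Your outline has the right high-level shape---Caccioppoli plus the weighted Faber--Krahn of Lemma~\ref{l:fk}, fed into a De Giorgi iteration---but there are two genuine gaps.

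\medskip
\textbf{The recursion is not derived, and the one-step scheme hides the main difficulty.} The Faber--Krahn constant in Lemma~\ref{l:fk} is $\dnf(\dlov(\measd(k)))\,\dlov(\measd(k))^{p}$, a \emph{nonlinear} function of the weighted measure of the super-level set. Your Chebyshev step bounds $\measd(k_n)$ by a multiple of $I_{n-1}$, so the recursion you actually get carries a factor of the form $W(I_{n-1}/k^{r})$ (with $W$ as in Lemma~\ref{l:aux_function}), not a clean power of $I_{n-1}$. The paper handles this in two moves you omit: first, it proves the structural growth/monotonicity of $W$ (Lemma~\ref{l:aux_function}, based on \eqref{eq:dnf_dec}, \eqref{eq:dnf_inc}, \eqref{eq:M_inc}, \eqref{eq:M_phyp}); second, it runs a \emph{two-level} iteration. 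The inner $n$-loop, with levels decreasing from $k(1-\sigma)$ to $k(1-2\sigma)$, uses a recursion of the absorptive form $I_n\le\eps I_{n+1}+(\text{remainder})$ (not the superlinear form you claim) to collapse the $(1+\theta)$-norm to the $1$-norm; only then, in the outer $j$-loop (levels $h_j\uparrow k$), does one obtain a genuine De Giorgi recursion $Y_{i+1}\le\gamma b^{i}Y_i^{1+\theta(p-\alpha_2)/\eta}$, where $\measd$ is now bounded via Chebyshev by the \emph{mass} $k^{-1}U$, not by $Y_i$. Your ``standard computation'' does not produce $I_n\le\gamma b^n k^{-\lambda}I_{n-1}^{1+\sigma}$ without these ingredients.

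\medskip
\textbf{The roles of \eqref{eq:close} and \eqref{eq:sup_n} are misidentified.} Assumption~\eqref{eq:close} is consumed entirely inside the proof of Lemma~\ref{l:fk} (and Lemma~\ref{l:fks}), where it lets one pull $\dnf\,\ipf(\vol)^{p^{*}}$ outside the integral over $M\setminus B_R$; it plays no role in the iteration itself. What makes the iteration close with a positive gain is \eqref{eq:sup_n}: it is exactly the statement $\eta:=(\SpDim-\alpha_1)(p+m-3)+p-\alpha_2>0$, which gives the De Giorgi exponent $\theta(p-\alpha_2)/\eta>0$ in \eqref{eq:sup_jj} and, in the singular case, the correct sign on the H\"older/Young exponents (see \eqref{eq:sin_ii}--\eqref{eq:sin_iii}). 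It does \emph{not} guarantee monotonicity of $\fpsf$; that is a separate hypothesis in case~i). Also note that in case~i) the paper needs $\theta$ large (so that $s>p$ and Lemma~\ref{l:fks} applies), whereas your fixed choice $\theta_{\text{Cacc}}=1$ forces $s<p$ and would not close the argument there.
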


Next we deal with the finite speed of propagation.

\begin{theorem}
  \label{t:fsp}
  Assume that $\supp u_{0}\subset B_{R_{0}}$ and that we are in the degenerate case \eqref{eq:pde_deg}, with $\fpsf$ increasing, $\fpsf(0+)=0$. Assume further \eqref{eq:M_iso} and that for a suitable $C>0$
  \begin{equation}
    \label{eq:fsp_m}
    \dnf(r)
    \le C
    \dnf(2r)
    \,,
    \qquad
    r\ge 0
    \,.
  \end{equation}
  Let $\dnf u_{0}\in L^{1}(M)$. Then for all $t>0$, $\supp u(t)\subset B_{R}$ if $R$ satisfies
  \begin{equation}
    \label{eq:fsp_n}
    R
    =
    4R_{0}
    +
    \fspf\big(
    \gamma
    t
    \norma{\dnf u_{0}}{1}^{p+m-3}
    \big)
    \,,
  \end{equation}
  for a constant $\gamma>0$ independent of $u$.
\end{theorem}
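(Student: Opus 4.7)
The plan is to establish finite speed of propagation via the energy/cutoff method of Andreucci--Tedeev, combined with the sharp sup bound just proved in Theorem~\ref{t:sup}. The overall strategy is to localize mass outside a large ball and show that it cannot expand faster than the rate encoded by $\fspf$.

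\textbf{Step 1: Localized energy inequality.} I would fix a radius $R>2R_{0}$ and $\delta\in(0,R/2)$, and pick a radial cutoff $\eta=\eta_{R,\delta}$ with $\eta\equiv 0$ on $B_{R}$, $\eta\equiv 1$ outside $B_{R+\delta}$ and $\abs{\grad\eta}\le c/\delta$. Multiplying \eqref{eq:pde} (in the formulation \eqref{eq:weaksol}) by $u\eta^{p}$, integrating in space-time, and using Young's inequality to absorb the mixed cross term, yields
\begin{equation*}
  \sup_{0<\tau<t}\int_{M} u(\tau)^{2}\dnf\eta^{p}\di\msr
  +
  c\int_{0}^{t}\!\!\int_{M} u^{m-1}\abs{\grad u}^{p}\eta^{p}\di\msr\di\tau
  \le
  c\,\delta^{-p}\int_{0}^{t}\!\!\int_{B_{R+\delta}\setminus B_{R}} u^{m+p-2}\di\msr\di\tau.
\end{equation*}
Since $\supp u_{0}\subset B_{R_{0}}\subset B_{R}$, the initial contribution at $\tau=0$ vanishes.

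\textbf{Step 2: Reduction via the sup and mass bounds.} In the degenerate regime \eqref{eq:pde_deg} one has $p+m-3>0$, so I would factor $u^{m+p-2}=u\cdot u^{m+p-3}$, use the sup bound \eqref{eq:sup_m} from Theorem~\ref{t:sup} on the second factor, and use the mass control \eqref{eq:mass_subcr_a} on the first. This yields
\begin{equation*}
  \text{(LHS)}
  \le
  c\,\delta^{-p}\,\norma{\dnf u_{0}}{1}
  \int_{0}^{t}\vold\bigl(\fspf(\gamma_{0}\tau\norma{\dnf u_{0}}{1}^{p+m-3})\bigr)^{-(p+m-3)}\di\tau.
\end{equation*}
Computing this time integral by the change of variable $s=\fspf(\gamma_{0}\tau\norma{\dnf u_{0}}{1}^{p+m-3})$ and invoking the definition $\fpsf(R)=\vold(R)^{p+m-3}\dnf(R)R^{p}$ of \eqref{eq:decayf_b}, the right-hand side is controlled by a constant times $\delta^{-p}\rho^{p}\dnf(\rho)\,\norma{\dnf u_{0}}{1}^{-(p+m-3)}$ with $\rho:=\fspf(\gamma t\norma{\dnf u_{0}}{1}^{p+m-3})$.

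\textbf{Step 3: De Giorgi iteration on nested shells.} To extract vanishing of $u$ outside $B_{R}$ with $R$ as in \eqref{eq:fsp_n}, I would run a De Giorgi iteration on a geometric sequence of radii $R_{n}=2R_{0}+\rho(2-2^{-n})$ and widths $\delta_{n}=\rho\,2^{-n-1}$. Applying Step~1 with $(R_{n},\delta_{n})$ and plugging the resulting energy into a weighted Sobolev--Poincaré inequality derived from the isoperimetric assumption \eqref{eq:M_iso} (which is essentially the content of Lemmas~\ref{l:fk}--\ref{l:fks} applied on the shell), one obtains a recursion
\begin{equation*}
  y_{n+1}\le c\,b^{n}\,(\dnf(\rho))^{-\sigma}\,y_{n}^{1+\kappa},
\end{equation*}
for suitable $b>1$, $\sigma,\kappa>0$. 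The doubling hypothesis \eqref{eq:fsp_m} is what allows one to treat $\dnf$ as essentially constant, equal to $\dnf(\rho)$, on every shell involved. Choosing $\rho$ as above, the explicit form of Step~2 shows that $y_{0}$ is small enough for the standard fast-geometric-convergence lemma to force $y_{n}\to 0$, hence $u\equiv 0$ on $\{d(x)>4R_{0}+\rho\}$, which is exactly \eqref{eq:fsp_n}.

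\textbf{Main obstacle.} The delicate part is matching exponents so that the iteration closes at precisely the radius $\fspf(\gamma t\norma{\dnf u_{0}}{1}^{p+m-3})$. This requires using the weighted Faber--Krahn inequality with the correct scaling of $\dnf$ and $\vol$ on the shell, and relies essentially on the degeneracy condition $p+m-3>0$ (which makes the right-hand side of the energy inequality super-linear in $u$ after the $L^{\infty}$ reduction) together with the doubling bound \eqref{eq:fsp_m}; the corresponding singular case would not close.
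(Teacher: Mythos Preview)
Your approach has a genuine structural gap: you invoke Theorem~\ref{t:sup} in Step~2 and the weighted Faber--Krahn inequalities (Lemmas~\ref{l:fk}--\ref{l:fks}) in Step~3, but neither is available under the hypotheses of Theorem~\ref{t:fsp}. The sup estimate requires the full list \eqref{eq:M_iso}--\eqref{eq:close} (volume growth from above and below, $p$-non-parabolicity \eqref{eq:M_phyp}, the density monotonicity \eqref{eq:dnf_dec}--\eqref{eq:dnf_inc}, the threshold condition \eqref{eq:close}), and the weighted Faber--Krahn lemmas need the Hardy inequality and the weighted Sobolev inequality of Lemma~\ref{l:sob}, hence again \eqref{eq:M_phyp} and \eqref{eq:M_isoup}. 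Theorem~\ref{t:fsp}, by contrast, assumes only the isoperimetric inequality \eqref{eq:M_iso} and the doubling property \eqref{eq:fsp_m}. The paper makes this explicit right after the statement of the theorem: the reason it can drop the global hypotheses is precisely that it works on annuli and therefore never needs a weighted Sobolev inequality or non-parabolicity. Your outline, as written, proves at best a weaker theorem under the stronger assumptions of Theorem~\ref{t:sup}.

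There is also a slip in Step~2: the energy inequality produces $\int u^{p+m-2}\di\msr$ on the shell \emph{without} the weight $\dnf$, so after writing $u^{p+m-2}=u\cdot u^{p+m-3}$ and applying the mass bound \eqref{eq:mass_subcr_a} you pick up an extra factor $\dnf(R)^{-1}$ (from $\int_{\text{shell}} u\,\di\msr\le C\dnf(R)^{-1}\norma{\dnf u}{1}$), which is missing from your displayed bound. The paper avoids this altogether: it never uses the sup estimate, but instead applies the \emph{unweighted} embedding of Corollary~\ref{co:emb_old_p} (which needs only \eqref{eq:M_iso}) to the Caccioppoli inequality of Lemma~\ref{l:cacc2} on nested annuli, runs a first iteration to obtain \eqref{eq:fsp_v}, and then a second iteration on shrinking annuli $D_{n}$ to get the recursive inequality \eqref{eq:fsp_viii} directly in terms of the weighted mass $\int_{D_{n}}\dnf u\,\di\msr$. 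The closing condition \eqref{eq:fsp_ix} is then exactly $\fpsf(R)\ge \gamma^{-1}t\norma{\dnf u_{0}}{1}^{p+m-3}$, with the doubling \eqref{eq:fsp_m} used only to compare $\dnf$ on the annulus with $\dnf(R)$.
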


The reason why we can avoid in Theorem~\ref{t:fsp} several of the global assumptions of Theorem~\ref{t:sup} is that in the proof we work
in bounded sets, shaped like annuli. Thus we don’t need to
apply a weighted Sobolev inequality, which dispenses us from assuming non-parabolicity as in \eqref{eq:M_phyp}. By the same token, we need only a standard doubling property for $\dnf$.

In the supercritical case \eqref{eq:unb_nk} we can prove the following universal, or absolute, sup bound for the solution, which is in fact in its functional form independent of the initial data and of the geometry of $M$.
\begin{theorem}
  \label{t:unb}
  Assume that \eqref{eq:pde_deg}, \eqref{eq:M_iso}--\eqref{eq:M_isoup} hold true, and that for a suitable $c>0$
  \begin{equation}
    \label{eq:unb_nk}
    \dnf(\tau)
    \le
    c^{-1} \tau^{-\alpha}
    \,,
    \quad
    \tau>1
    \,,
  \end{equation}
  for some $\alpha>p$.
  \\
  Let $\dnf u_{0}\in L^{1}(M)$, $\sqrt{\dnf} u_{0}\in L^{2}(M)$. Then
  \begin{equation}
    \label{eq:unb_m}
    \norma{u(t)}{\infty}
    \le
    \gamma
    t^{-\frac{1}{p+m-3}}
    \,,
    \qquad
    t>0
    \,,
  \end{equation}
  where $\gamma$ does not depend on $u$.
\end{theorem}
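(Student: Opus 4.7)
The plan is to obtain the universal bound by combining a DeGiorgi-type $L^\infty$–$L^1_\dnf$ estimate with an interpolation that uses the supercritical density decay $\alpha>p$ to erase the dependence on initial data. First, I would derive a local sup-over-time estimate by the energy method: multiplying the equation by $u^{q-1}$ times a time cutoff vanishing at the initial time and integrating by parts produces an energy identity which, combined with the weighted Faber–Krahn inequality of Lemmas~\ref{l:fk}--\ref{l:fks} and a DeGiorgi iteration over decreasing level sets in the $\dnf$-weighted measure, yields
\begin{equation*}
  \norma{u(t)}{\infty}\le \gamma\,t^{-a}\Big(\sup_{0<s<t}\int_M u(s)^{q}\dnf\di\msr\Big)^{b}
\end{equation*}
for appropriate $q>1$ and balancing exponents $a,b$ reflecting the geometry of $M$ and the density decay. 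This is the same sort of estimate underlying Theorem~\ref{t:sup}, but now performed in the global space $M$ and with the freedom of choosing $q$ in terms of $\alpha$.

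Next, interpolating $\int u^q\dnf\le \norma{u(t)}{\infty}^{q-1}\int u\dnf$ together with the mass control of Remark~\ref{r:mass_subcr} reduces the estimate to an implicit inequality
\begin{equation*}
  \norma{u(t)}{\infty}^{1-b(q-1)}\le \gamma\,t^{-a}\norma{\dnf u_0}{1}^{b},
\end{equation*}
which still depends on the initial mass. To eliminate this dependence, I would exploit hypothesis \eqref{eq:unb_nk} combined with \eqref{eq:M_inc}, \eqref{eq:M_isoup} via a weighted splitting of $\int u\dnf$ over $B_R$ and its complement: on $B_R$ the integral is bounded by $\norma{u(t)}{\infty}\vold(R)$, while on $M\setminus B_R$ the decay $\alpha>p$ gives a weighted Hardy-type bound of the form $\int_{M\setminus B_R}u\dnf \le \gamma R^{-(\alpha-p)}\left(\int u^{m-1}|\grad u|^p\right)^{\theta}\norma{u(t)}{\infty}^{1-\theta}$, which in turn is absorbed into the dissipation term of the energy identity. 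Optimizing in $R$ and feeding the outcome back into the iteration collapses the implicit inequality into $M(t)^{p+m-3}\,t\le \gamma$ with $M(t)=\norma{u(t)}{\infty}$, hence \eqref{eq:unb_m}.

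The main difficulty lies in calibrating the weighted Sobolev/Hardy inequalities so that the exponent of $\norma{u(t)}{\infty}$ on the right-hand side of the implicit relation stays strictly below $1$ and the universal decay rate comes out as exactly $t^{-1/(p+m-3)}$: this is precisely where the threshold $\alpha>p$ is essential, since below it the tail $\int_{M\setminus B_R}u\dnf$ cannot be absorbed by the dissipation and only the subcritical estimate of Theorem~\ref{t:sup} survives. Routine but technical points include justifying the energy computations along the approximation scheme of the introduction — time-regularization by Steklov averages, spatial cutoffs made possible by the volume-growth assumptions, and passage to the limit — using $\sqrt{\dnf}\,u_0\in L^{2}(M)$ to initialize the $q=2$ energy at $t=0$ and $\dnf u_0\in L^{1}(M)$ to keep mass under control uniformly in the approximation.
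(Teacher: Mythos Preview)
There is a genuine gap in the mechanism you propose for erasing the dependence on the initial data.

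First, a smaller point: your step~1 invokes Lemmas~\ref{l:fk}--\ref{l:fks}, but those require assumptions \eqref{eq:dnf_inc} and \eqref{eq:close}, neither of which is hypothesized in Theorem~\ref{t:unb}. The paper replaces the Faber--Krahn machinery here by a different embedding (Lemma~\ref{l:unb_emb}): under $\alpha>p$ together with \eqref{eq:M_iso}--\eqref{eq:M_isoup} one has
\[
  \int_{M}\dnf\,|v|^{r}\di\msr
  \le
  \gamma\Big(\int_{M}|\grad v|^{p}\di\msr\Big)^{r/p},
  \qquad 0<r<p^{*},
\]
because the integral $\int_{M}\dnf^{p^{*}/(p^{*}-r)}\ipf(\vol(d))^{rp^{*}/(p^{*}-r)}\di\msr$ converges. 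This is the only place $\alpha>p$ enters, and it is the replacement for Lemmas~\ref{l:fk}--\ref{l:fks} in the DeGiorgi step.

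The decisive gap, however, is in how you try to remove $\norma{\dnf u_{0}}{1}$. Interpolating $\int\dnf u^{q}\le\norma{u}{\infty}^{q-1}\int\dnf u$ and then invoking mass control can never give a universal bound: the mass is controlled only by $\norma{\dnf u_{0}}{1}$ (and is in fact conserved while the support stays bounded), so dependence on $u_{0}$ persists. Your splitting $\int_{M}=\int_{B_{R}}+\int_{M\setminus B_{R}}$ does not cure this: the tail bound you write, $\int_{M\setminus B_{R}}u\dnf\le\gamma R^{-(\alpha-p)}(\int u^{m-1}|\grad u|^{p})^{\theta}\norma{u}{\infty}^{1-\theta}$, is not a consequence of Hardy (which controls $\int u^{p}/d^{p}$, not $\int u/d^{p}$), and even if it held, the dissipation $\int_{0}^{t}\int u^{m-1}|\grad u|^{p}$ is itself bounded only by the initial energy $\int\dnf u_{0}^{2}$, so the $u_{0}$-dependence reappears.

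The paper's route is different and short: the embedding above, applied with $r'=p(1+q)/(p+m+q-2)<p$ to $w=u^{(p+m+q-2)/p}$, turns the energy identity into the autonomous differential inequality
\[
  \der{}{t}\,E_{q+1}(t)\le -\gamma\,E_{q+1}(t)^{(p+m+q-2)/(1+q)},
\]
whose exponent exceeds $1$ because $p+m-3>0$. Integrating this ODE gives the \emph{universal} bound $E_{q+1}(t)\le\gamma t^{-(1+q)/(p+m-3)}$, with no trace of $u_{0}$. Feeding this into the DeGiorgi sup estimate $\norma{u(t)}{\infty}\le\gamma t^{-r/\Ka}\sup_{t/4<\tau<t}E_{q+1}(\tau)^{(r-p)/\Ka}$ (obtained via the same embedding inside the iteration) yields exactly $\norma{u(t)}{\infty}\le\gamma t^{-1/(p+m-3)}$. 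The key idea you are missing is this ODE step: the universality comes from a decay inequality for a weighted $L^{q+1}$ norm, not from manipulating the mass.
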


Finally, if the second threshold is exceeded, which we essentially
assume in \eqref{eq:ibl_n}, the finite speed of propagation property
fails.
\begin{theorem}
  \label{t:ibl}
  Assume \eqref{eq:pde_deg}, \eqref{eq:M_iso}, \eqref{eq:M_phyp}, \eqref{eq:M_inc}, \eqref{eq:M_isoup}.
  Let $\dnf u_{0}\in L^{1}(M)$, $\sqrt{\dnf} u_{0}\in L^{2}(M)$
  and $\unk_{0}$ with bounded support. Assume that 
  \begin{gather}
    \label{eq:ibl_n}
    \int_{1}^{+\infty}
    \big(
    \tau^{p}
    \dnf(\tau)
    \big)^{r}
    \fpsf(\tau)^{\frac{1}{p+m-3}}
    \frac{\di\tau}{\tau}
    <
    +\infty
    \,,
  \end{gather}
  for $r\in(-r_{0},r_{0})$ for some $r_{0}>0$.
  \\
  Then the boundedness
  of the support of $\unk(t)$ fails over $(0,\bar t)$ for
  a sufficiently large $\bar t>0$.
\end{theorem}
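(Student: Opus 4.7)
The plan is a contradiction argument combining mass conservation with a weighted energy/Faber–Krahn estimate. Assume, on the contrary, that $\supp \unk(\cdot, t) \subset B_{R_{*}}$ for every $t \in (0, \bar t)$ and every $\bar t > 0$, for some fixed $R_{*} \ge R_{0}$. Then Remark~\ref{r:mass_subcr} (equation \eqref{eq:mass_subcr_ab}) gives exact mass conservation,
\begin{equation*}
  \int_M \unk(x, t) \dnf(x) \di\msr
  = \int_M \unk_{0} \dnf \di\msr
  =: M_{0} > 0,
  \qquad t \in (0, \bar t).
\end{equation*}

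Next I would produce an ODE for a weighted energy, testing \eqref{eq:pde} against $\unk^{q}$ with a parameter $q > 0$ to be calibrated. Integration by parts in the manifold sense yields
\begin{equation*}
  \der{}{t} \int_M \unk^{q+1} \dnf \di\msr
  = -q(q+1) \int_M \unk^{q + m - 2} \abs{\grad \unk}^{p} \di\msr
  = -c_{q} \int_M \abs{\grad \unk^{\beta}}^{p} \di\msr,
\end{equation*}
with $\beta = (p + q + m - 2)/p$. Since $\supp \unk^{\beta}(\cdot, t) \subset B_{R_{*}}$, the weighted Faber–Krahn inequality of Lemma~\ref{l:fk} (or Lemma~\ref{l:fks}) together with the volume-growth assumptions \eqref{eq:M_iso}, \eqref{eq:M_phyp}, \eqref{eq:M_inc}, \eqref{eq:M_isoup} upgrades the right-hand side into a power of $E$ itself, giving
\begin{equation*}
  E'(t) \le -A(R_{*}) \, E(t)^{\sigma}, \qquad \sigma > 1, \qquad E(t) := \int_M \unk(\cdot,t)^{q+1} \dnf \di\msr,
\end{equation*}
whose integration forces $E(t) \to 0$ as $t \to \infty$.

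On the other hand, H\"older's inequality combined with mass conservation yields the uniform lower bound
\begin{equation*}
  M_{0} = \int_{B_{R_{*}}} \unk \dnf \di\msr
  \le E(t)^{1/(q+1)} \vold(R_{*})^{q/(q+1)},
\end{equation*}
i.e., $E(t) \ge M_{0}^{q+1} \vold(R_{*})^{-q} > 0$ uniformly in $t$. For $\bar t$ large these two bounds are incompatible, delivering the contradiction and hence the failure of bounded support.

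The main technical obstacle is the calibration of $q$ (equivalently $\beta$) so that the Faber–Krahn constant $A(R_{*})$ and decay exponent $\sigma > 1$ produce a decay rate strictly beating the H\"older lower bound; this requires the weighted Sobolev constants to remain controlled as $q$ is perturbed around the critical choice. This is exactly the role of assumption \eqref{eq:ibl_n}: the finiteness of $\int_{1}^{+\infty}(\tau^{p}\dnf(\tau))^{r}\fpsf(\tau)^{1/(p+m-3)} \di\tau/\tau$ for $r$ in a whole neighborhood of $0$ provides the admissible range of exponents and ensures that the associated weighted embedding constants, together with the interpolation terms of the form $(\tau^{p}\dnf(\tau))^{r}\fpsf(\tau)^{1/(p+m-3)}$ arising when one bounds $\abs{\grad \unk^{\beta}}^{p}$ by a power of $E$, stay uniformly bounded. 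Once this calibration is available the contradiction above closes, and boundedness of the support over $(0,\bar t)$ must fail for $\bar t$ large enough.
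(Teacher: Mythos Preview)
Your overall architecture---contradiction via mass conservation against decay of a weighted energy $E(t)=\int_M \dnf u^{q+1}\di\msr$---matches the paper's. But two concrete gaps prevent the argument from closing as written.

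First, you invoke Lemma~\ref{l:fk} (or Lemma~\ref{l:fks}) to turn $\int_M|\grad u^\beta|^p\di\msr$ into a power of $E$. Those lemmas require, through Lemma~\ref{l:sob}, the growth hypotheses \eqref{eq:M_grow}, \eqref{eq:M_growup}, and in addition \eqref{eq:dnf_inc} and \eqref{eq:close}. None of these is assumed in Theorem~\ref{t:ibl}, so the Faber--Krahn route is not available. The paper instead uses only the Hardy inequality of Theorem~\ref{t:hardy} (which needs just \eqref{eq:M_iso}, \eqref{eq:M_phyp}, \eqref{eq:M_isoup}) combined with H\"older: one obtains
\[
\int_M \dnf u^{1+\theta}\di\msr
\le \gamma\, J(\theta)^{\frac{p+m-3}{p+m+\theta-2}}
\Big(\int_M |\grad u^{\frac{p+m+\theta-2}{p}}|^p\di\msr\Big)^{\frac{1+\theta}{p+m+\theta-2}},
\quad
\int_M \dnf u\,\di\msr
\le \gamma\, I(\theta)^{\frac{p+m+\theta-3}{p+m+\theta-2}}
\Big(\cdots\Big)^{\frac{1}{p+m+\theta-2}},
\]
where $I(\theta),J(\theta)$ are \emph{global} integrals over $M$ that reduce, after using \eqref{eq:M_inc}, to the very integral in \eqref{eq:ibl_n} with $r=\pm\theta/[(p+m-3)(p+m+\theta-3)]$ and $r=\theta/(p+m-3)$ respectively. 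This is how \eqref{eq:ibl_n} actually enters: it makes $I(\theta)$ and $J(\theta)$ finite for small $\theta>0$, with no reference to the support of $u$.

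Second, your constants $A(R_*)$ and $\vold(R_*)$ depend on the support radius, but the negation of the theorem does not hand you a single $R_*$: it only says that for each $\bar t$ there is some $R(\bar t)$ bounding the support on $(0,\bar t)$. Since both your decay rate and your H\"older lower bound degrade as $R_*\to\infty$, the contradiction need not occur before $\bar t$. The paper sidesteps this entirely: the first displayed inequality above, integrated over $(t,t+1)$ and combined with the energy identity, gives $M_0\le \gamma E_{1+\theta}(t)^{1/(p+m+\theta-2)}$; the second yields $E_{1+\theta}(t)\le \gamma t^{-(1+\theta)/(p+m-3)}$. Both constants are independent of the support radius, so the contradiction $M_0\le \gamma t^{-\text{positive}}$ follows for every $t$ on whose horizon the support is bounded.
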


\subsection{Example}
\label{s:exam}

Let us consider the case of a model Riemannian manifold (see e.g., \cite{Grigoryan:2006}) with metric $\di r^{2}+ f(r)^{2} \di \xi^{2}$, $r\ge 0$, $\xi\in S^{\SpDim-1}$. We have
\begin{equation*}
  \vol(R)
  =
  \arn
  \int_{0}^{R}
  f(s)^{\SpDim-1}
  \di s
  \,,
  \qquad
  \arn=\abs{S^{\SpDim-1}}_{\SpDim-1}
  \,.
\end{equation*}
We define the continuous functions $f$, $\dnf$ for $\tau\ge 0$ as in
\begin{equation*}
  f(\tau)
  =
  \left\{
    \begin{alignedat}{2}
      &C(A)\tau
      \,,
      &\quad&
      \tau\le A
      \,,
      \\
      &\tau^{\beta}
      (\ln\tau)^{\nu}
      \,,
      &\quad&
      \tau>A
      \,;
    \end{alignedat}
  \right.
  \quad
    \dnf(\tau)
  =
  \left\{
    \begin{alignedat}{2}
      &B^{-\alpha}(\ln B)^{\mu}
      \,,
      &\quad&
      \tau\le B
      \,,
      \\
      &\tau^{-\alpha}(\ln \tau)^{\mu}
      \,,
      &\quad&
      \tau>B
      \,,
    \end{alignedat}
    \right.
\end{equation*}
where $B\ge A\ge e$ are suitably chosen. Here $(p-1)/(\SpDim-1)<\beta< 1$ and $\mu$, $\nu\in \R$, $\alpha>0$. By means of lengthy but straightforward calculations one can check that our assumptions are satisfied, thereby implying the following.

The proof and the result of Theorem~\ref{t:sup} apply, for large $t$, in the two cases:
\\
i) $p+m-3<0$, $m>0$: we have to assume
\begin{gather}
  \label{eq:exa_n}
  \SpDim(p+m-3)+p>0
  \,,
  \\
  \label{eq:exa_nn}
  \alpha<
  \frac{\big(\beta(\SpDim-1)+1\big)(p+m-3)+p}{p+m-2}
  =:
  \alpha^{*}
  \,,
  \\
  \label{eq:exa_nnn}
  p>\alpha>\frac{\SpDim-1}{\SpDim}(1-\beta)p^{*}
  \,.
\end{gather}
ii) $p+m-3>0$: in this case \eqref{eq:exa_n} is
automatically satisfied. We assume  \eqref{eq:exa_nn}, \eqref{eq:exa_nnn}.
\\
It can be computed that the function $\fpsf$ takes here the
form
\begin{equation*}
  \fpsf(s)
  =
  s^{\lambda}
  (\ln s)^{\sigma}
  (1+\smo(1))
  \,,
  \qquad
  s\to+\infty
  \,,
\end{equation*}
for
\begin{gather*}
  \lambda
  =
  p-\alpha
  +
  \big(
  1
  +
  \beta(\SpDim-1)
  -
  \alpha
  \big)
  (p+m-3)
  \,,
  \\
  \sigma
  =
  \nu
  (\SpDim-1)
  (p+m-3)
  +
  \mu
  (p+m-2)
  \,,
\end{gather*}
where $\lambda>0$ owing to our assumptions.
Then
\begin{equation*}
  \fpsf^{(-1)}(v)
  =
  \lambda^{\frac{\sigma}{\lambda}}
  v^{\frac{1}{\lambda}}
  (\ln v)^{-\frac{\sigma}{\lambda}}
  (1+\smo(1))
  \,,
  \qquad
  v\to+\infty
  \,.
\end{equation*}
Hence Theorem~\ref{t:sup} applies and yields an asymptotic decay rate of $t^{-\delta_{1}}(\ln t)^{\delta_{2}}$ with
\begin{equation*}
  \delta_{1}
  =
  \frac{
    \beta(\SpDim-1)+1-\alpha
  }{
    \lambda
  }
  \,,
  \qquad
  \delta_{2}
  =
  \sigma
  \delta_{1}
  -
  \mu
  -
  \nu(\SpDim-1)
  \,.
\end{equation*}
Also the estimate of finite speed of propagation in Theorem~\ref{t:fsp} is in force for large times, provided $p+m-3>0$, and \eqref{eq:exa_nn} is satisfied.

Concerning the universal bound result in Theorem~\ref{t:unb}, the necessary assumptions in this example are fulfilled if $\alpha>p$, $p+m-3>0$.

Finally, the interface blow up phenomenon of Theorem~\ref{t:ibl} takes place if $p+m-3>0$ and
\begin{equation}
  \label{eq:exa_mm}
  \alpha
  >
  \alpha^{*}
  \,,
\end{equation}
which guarantees \eqref{eq:ibl_n}. Note that $\alpha^{*}>p$.

If we take formally above $\beta=1$, $\mu=\nu=0$, we recover the results known in the setting of the Euclidean space (\cite{Tedeev:2007}).
Furthermore, the example is still admissible, under the same assumptions, if $f$ and $\dnf$ are modified by multiplying them by a factor $1+H(\tau)$ where $H$ is a sufficiently regular and fast decaying function as $\tau\to+\infty$.

\subsection{Plan of the paper.}
\label{s:plan}
We prove in Section~\ref{s:aux} several auxiliary inequalities. In
Section~\ref{s:sup} we prove Theorem~\ref{t:sup} in the degenerate
case, while the proof in the singular case, being a minor variant of the previous
one, is dealt with in the short Section~\ref{s:sin}. The finite speed
of propagation property is proved in Section~\ref{s:fsp}. The
supercritical universal sup bound is proved in Section~\ref{s:unb},
while finally the interface blow up is treated in Section~\ref{s:ibl}.

\section{Auxiliary results}
\label{s:aux}

We begin by stating the following trivial result.
\begin{lemma}
  \label{l:aux_elem}
  Let $f\in C([0,+\infty))$ be a nonnegative function such that for given $r_{0}$, $c_{0}$, $c_{1}$, $\beta>0$, $f$ is nondecreasing in $(r_{0},+\infty)$ and satisfies 
  \begin{equation}
    \label{eq:aux_elem_n}
    c_{0}s^{\beta}
    \le
    f(s)
    \le
    c_{1}
    s^{\beta}
    \,,
    \qquad
    0\le s\le r_{0}
    \,.
  \end{equation}
  Then for all $s>r>0$
  \begin{equation}
    \label{eq:aux_elem_nn}
    f(r)
    \le
    c_{1}c_{0}^{-1}
    f(s)
    \,.
  \end{equation}
\end{lemma}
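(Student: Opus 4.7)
The statement is elementary and reduces to a case analysis on the position of $r$ and $s$ relative to $r_{0}$. First, notice that the two-sided bound in \eqref{eq:aux_elem_n} forces $c_{0}\le c_{1}$ (by evaluating at any $s\in(0,r_{0}]$ where $s^{\beta}>0$), so in particular $c_{1}c_{0}^{-1}\ge 1$. Note also that continuity of $f$ at $r_{0}$ combined with monotonicity on $(r_{0},+\infty)$ yields $f(s)\ge f(r_{0})\ge c_{0}r_{0}^{\beta}$ for every $s\ge r_{0}$.

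I would then split the inequality $f(r)\le c_{1}c_{0}^{-1}f(s)$, for $0<r<s$, into three cases.

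\textbf{Case 1: $s\le r_{0}$.} Here both $r,s$ lie in the regime where the two-sided power bound \eqref{eq:aux_elem_n} applies. Since $r<s$, we have $f(r)\le c_{1}r^{\beta}\le c_{1}s^{\beta}\le c_{1}c_{0}^{-1}\cdot c_{0}s^{\beta}\le c_{1}c_{0}^{-1}f(s)$.

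\textbf{Case 2: $r_{0}\le r$.} Then $r,s$ both belong to the region of monotonicity; using $f(r)\le f(s)$ and $c_{1}c_{0}^{-1}\ge 1$ finishes the case.

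\textbf{Case 3: $r<r_{0}<s$ (or $r\le r_{0}\le s$).} Here we combine the two estimates: $f(r)\le c_{1}r^{\beta}\le c_{1}r_{0}^{\beta}$ from \eqref{eq:aux_elem_n}, and $f(s)\ge f(r_{0})\ge c_{0}r_{0}^{\beta}$ from the observation above. Dividing gives the desired constant $c_{1}c_{0}^{-1}$.

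There is no real obstacle here; the only subtle point is ensuring $f(s)\ge c_{0}r_{0}^{\beta}$ at the transition point $r_{0}$, which requires the continuity hypothesis $f\in C([0,+\infty))$ to pass the lower bound \eqref{eq:aux_elem_n} from the open interval across $r_{0}$ before invoking monotonicity beyond it.
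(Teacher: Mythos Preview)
Your proof is correct. The paper actually omits the proof entirely, simply introducing the lemma as a ``trivial result''; your three-case analysis is the natural and complete way to verify it, and even flags the only point requiring a moment's care (continuity at $r_{0}$ to bridge the power bound and the monotonicity).
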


A first consequence of Lemma~\ref{l:aux_elem} applied to
$f(s)=\dnf(s)s^{\alpha_{2}}$, and of \eqref{eq:dnf_inc}, is the
following one: for all $s>r>0$
\begin{gather}
  \label{eq:aux_dnf_ge1}
  \dnf(r)
  r^{\alpha_{2}}
  \le
  C
  \dnf( s)
  s^{\alpha_{2}}
  \,,
\end{gather}
where $C=\dnf(0)/\dnf(1)\ge 1$.

\begin{lemma}
  \label{l:aux_vold}
  Assume \eqref{eq:M_phyp}; then for the same constant $c$ as in \eqref{eq:M_phyp}
  \begin{equation}
    \label{eq:aux_vold_n}
    \vol(s)
    \ge
    c
    \Big(\frac{s}{r}\Big)^{p}
    \vol(r)
    \,,
    \qquad
    s>r>0
    \,.
  \end{equation}
  Assume further \eqref{eq:dnf_inc}; then
  a constant $\tilde c>0$ exists such that
  \begin{equation}
    \label{eq:aux_vold_nn}
    \vold(s)
    \ge
    \tilde c
    \Big(\frac{s}{r}\Big)^{p-\alpha_{2}}
    \vold(r)
    \,,
    \qquad
    s>r>0
    \,.
  \end{equation}
  As a consequence
  \begin{equation}
    \label{eq:aux_vold_nnn}
    \dlov(a)
    \le
    \tilde c^{-\frac{1}{p-\alpha_{2}}}
    \Big(\frac{a}{b}\Big)^{\frac{1}{p-\alpha_{2}}}
    \dlov(b)
    \,,
    \qquad
    a>b>0
    \,.
  \end{equation}
\end{lemma}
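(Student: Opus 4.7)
\medskip
\noindent\textbf{Proof plan.} The heart of the matter is \eqref{eq:aux_vold_n}: the non-parabolicity hypothesis \eqref{eq:M_phyp} must be translated into the pointwise lower bound $V(s)/s^{p}\ge c\,V(r)/r^{p}$. The natural move is to change variable $t=V(\tau)$ in \eqref{eq:M_phyp}, which rewrites it, with $R=V^{(-1)}(k)$, as
\begin{equation*}
  \int_{0}^{R}\frac{V'(\tau)}{\tau^{p}}\di\tau
  \le c^{-1}\frac{V(R)}{R^{p}}\,.
\end{equation*}
Integrating by parts on the left, and noting that the boundary contribution at $\tau=0$ vanishes because small geodesic balls are almost Euclidean so that $V(\tau)/\tau^{p}\to 0$ as $\tau\to 0^{+}$ (here we use $p<N$), gives
\begin{equation*}
  \frac{V(R)}{R^{p}}+p\int_{0}^{R}\frac{V(\tau)}{\tau^{p+1}}\di\tau
  \le c^{-1}\frac{V(R)}{R^{p}}\,.
\end{equation*}
For $s>r>0$, I would keep only the portion of the integral on $[r,s]$, use monotonicity $V(\tau)\ge V(r)$ to factor it out, evaluate $\int_{r}^{s}\tau^{-p-1}\di\tau=p^{-1}(r^{-p}-s^{-p})$, and rearrange. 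This yields $V(r)(r^{-p}-s^{-p})\le (c^{-1}-1)V(s)/s^{p}$, from which \eqref{eq:aux_vold_n} follows immediately with the same constant $c$.

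For \eqref{eq:aux_vold_nn} I would simply multiply \eqref{eq:aux_vold_n} by the density bound \eqref{eq:aux_dnf_ge1}, which is already in hand from Lemma~\ref{l:aux_elem} applied to $\dnf(s)s^{\alpha_{2}}$: namely $\dnf(r)r^{\alpha_{2}}\le C\dnf(s)s^{\alpha_{2}}$ for all $s>r>0$, with $C=\dnf(0)/\dnf(1)$. Rewriting it as $\dnf(s)/\dnf(r)\ge C^{-1}(r/s)^{\alpha_{2}}$ and combining with \eqref{eq:aux_vold_n} gives $\vold(s)/\vold(r)\ge C^{-1}c\,(s/r)^{p-\alpha_{2}}$, i.e., \eqref{eq:aux_vold_nn} with $\tilde c=c/C$.

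Finally \eqref{eq:aux_vold_nnn} is the pointwise inversion of \eqref{eq:aux_vold_nn}: setting $s=\dlov(a)$, $r=\dlov(b)$ (so $s>r$ since $a>b$ and $\vold$ is increasing), \eqref{eq:aux_vold_nn} reads $a\ge \tilde c\,(\dlov(a)/\dlov(b))^{p-\alpha_{2}}b$, and extracting the $(p-\alpha_{2})$-th root gives the claim.

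\medskip
\noindent\textbf{Main obstacle.} Nothing here is deep; the only point that deserves care is the vanishing of the boundary term at $\tau=0$ in the integration by parts, which uses the strict inequality $p<N$ and the Euclidean character of small balls in $M$. Once that is secured the remaining steps are algebraic.
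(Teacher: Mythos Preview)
Your argument is correct, and for \eqref{eq:aux_vold_nn} and \eqref{eq:aux_vold_nnn} it coincides exactly with the paper's. For \eqref{eq:aux_vold_n}, however, the paper takes a more direct path that avoids integration by parts altogether: taking $k=\vol(s)$ in \eqref{eq:M_phyp} and simply observing that the integrand $t\mapsto \vol^{(-1)}(t)^{-p}$ is decreasing, one gets in a single chain
\[
  c^{-1}\frac{\vol(s)}{s^{p}}
  \ge
  \int_{0}^{\vol(s)}\frac{\di t}{\vol^{(-1)}(t)^{p}}
  \ge
  \int_{0}^{\vol(r)}\frac{\di t}{\vol^{(-1)}(t)^{p}}
  \ge
  \frac{\vol(r)}{r^{p}}\,,
\]
which is \eqref{eq:aux_vold_n}. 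Your route---change of variable, integration by parts, then monotonicity of $\vol$ twice (once to pull $\vol(r)$ out of the integral, and implicitly once more to absorb the leftover $\vol(r)s^{-p}$ into $\vol(s)s^{-p}$ and recover the exact constant~$c$)---works just as well, but buys you the obligation to check that $\vol(\tau)\tau^{-p}\to 0$ as $\tau\to 0^{+}$. That check is fine under the standing hypothesis $p<\SpDim$, as you note; the paper's argument sidesteps it entirely by never leaving the original integral form of \eqref{eq:M_phyp}.
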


\begin{proof}
  The inequality \eqref{eq:aux_vold_n} follows from \eqref{eq:M_phyp}, since on taking there $k=\vol(s)$ we get
  \begin{equation}
    \label{eq:aux_vold_i}
    c^{-1}
    \frac{\vol(s)}{s^{p}}
    \ge
    \int_{0}^{\vol(s)}
    \frac{\di t}{\vol^{(-1)}(t)^{p}}
    \ge
    \int_{0}^{\vol(r)}
    \frac{\di t}{\vol^{(-1)}(t)^{p}}
    \ge
    \frac{\vol(r)}{r^{p}}
    \,,
  \end{equation}
  when we also exploit the fact that the integrand is decreasing.

  Next we compute appealing to \eqref{eq:aux_dnf_ge1} and to \eqref{eq:aux_vold_n}
  \begin{equation}
    \label{eq:aux_vold_ii}
    \vold(s)
    =
    \vol(s)\dnf(s)s^{\alpha_{2}}s^{-\alpha_{2}}
    \ge
    c
    \Big(\frac{s}{r}\Big)^{p}
    \vol(r)
    C^{-1}
    \dnf(r)
    r^{\alpha_{2}}
    s^{-\alpha_{2}}
    \,,
  \end{equation}
  whence \eqref{eq:aux_vold_nn}. Finally \eqref{eq:aux_vold_nnn} is a simple consequence of \eqref{eq:aux_vold_nn}.
\end{proof}

\begin{lemma}
  \label{l:aux_vol}
  Under the assumptions \eqref{eq:M_iso}, \eqref{eq:M_grow}, \eqref{eq:M_growup}
  we have
  \begin{gather}
    \label{eq:aux_vol_n}
    \gamma^{-1}
    \lambda
    \vol(R)
    \le
    \vol(\lambda R)
    \le
    \gamma
    \lambda^{\SpDim}
    \vol(R)
    \,,
    \qquad
    \lambda \ge 1
    \,,
    \\
    \label{eq:aux_vol_nn}
    \gamma^{-1}
    \lambda^{\SpDim}
    \vol(R)
    \le
    \vol(\lambda R)
    \le
    \gamma
    \lambda
    \vol(R)
    \,,
    \qquad
    0< \lambda \le 1
    \,.
  \end{gather}
\end{lemma}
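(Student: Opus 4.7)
The plan is to reduce all four inequalities of the lemma to a single sandwich estimate
\begin{equation*}
  \frac{c}{\SpDim}\,R \;\le\; \frac{\vol(R)}{\iso(\vol(R))} \;\le\; c^{-1} R,
  \qquad R>0,
\end{equation*}
obtained by integrating the two-sided ODE relation $c\,\iso(\vol(R))\le \vol'(R)\le c^{-1}\iso(\vol(R))$ of \eqref{eq:M_grow}--\eqref{eq:M_growup} and invoking the monotonicity of $\ipf$ encoded in \eqref{eq:M_iso}.

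To establish the sandwich I would set $F(v)=\int_{0}^{v}\di s/\iso(s)$. The monotonicity of $\ipf$ gives $\iso(t)\ge(t/v)^{(\SpDim-1)/\SpDim}\iso(v)$ for $0<t\le v$, so $F$ is well defined on $[0,+\infty)$ and satisfies the double inequality $v/\iso(v)\le F(v)\le \SpDim\,v/\iso(v)$: the lower bound uses only that $\iso$ is nondecreasing, the upper one a short explicit integration based on the pointwise inequality just displayed. On the other hand, $R\mapsto F(\vol(R))$ has derivative $\vol'(R)/\iso(\vol(R))\in[c,c^{-1}]$ and vanishes at $R=0$, so $cR\le F(\vol(R))\le c^{-1}R$. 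Combined with the double inequality for $F$, this yields the sandwich.

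Granted the sandwich, the lower bound in \eqref{eq:aux_vol_n} is immediate:
\begin{equation*}
  \vol(\lambda R)-\vol(R)=\int_{R}^{\lambda R}\vol'(s)\,\di s \ge c\,\iso(\vol(R))(\lambda-1)R\ge c^{2}(\lambda-1)\vol(R),
\end{equation*}
using monotonicity of $\iso\circ\vol$ followed by the sandwich; for $\lambda\ge 1$ this rearranges to $\vol(\lambda R)\ge c^{2}\lambda\,\vol(R)$. For the upper bound with exponent $\SpDim$, I would set $\phi(R)=\vol(R)^{1/\SpDim}$; using $\iso(\vol)=\vol^{(\SpDim-1)/\SpDim}/\ipf(\vol)$, relation \eqref{eq:M_growup} rewrites as $\phi'(R)\le c^{-1}/(\SpDim\,\ipf(\vol(R)))$. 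Integrating this over $[R,\lambda R]$, freezing $\ipf(\vol(\cdot))$ at the left endpoint thanks to its monotonicity, and inserting the sandwich in the form $\ipf(\vol(R))\ge cR/(\SpDim\,\phi(R))$, I obtain $\phi(\lambda R)\le c^{-2}\lambda\,\phi(R)$, hence $\vol(\lambda R)\le c^{-2\SpDim}\lambda^{\SpDim}\vol(R)$. The complementary estimates \eqref{eq:aux_vol_nn} then follow by applying the two bounds just established to $R'=\lambda R$ with $\lambda'=1/\lambda\ge 1$.

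The main technical point I expect to require care is recovering the sharp exponent $\SpDim$ in the upper bound: this requires integrating $\phi'$ directly on $[R,\lambda R]$ and using the monotonicity of $\ipf\circ\vol$ to freeze the profile factor, rather than applying Gronwall to the pointwise differential inequality $\phi'(R)\le \phi(R)/(c^{2}R)$ that one can also extract from the sandwich but which yields only the strictly weaker exponent $\SpDim/c^{2}$.
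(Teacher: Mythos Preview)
Your argument is correct. Both you and the paper hinge on the same core estimate, which the paper states as
\[
  \gamma^{-1} R \le F(\vol(R)) \le \gamma R,\qquad F(v)=\frac{v}{\iso(v)},
\]
and which you call the sandwich; you derive it via the auxiliary antiderivative $\int_{0}^{v}\di s/\iso(s)$, which the paper also computes and shows to be comparable to $v/\iso(v)$. The difference lies in the final step: the paper first establishes the scaling $\lambda F^{(-1)}(R)\le F^{(-1)}(\lambda R)\le \lambda^{\SpDim}F^{(-1)}(R)$ for $\lambda\ge1$ directly from the monotonicity of $\iso$ and of $\ipf$, then combines this with $\vol(R)\sim F^{(-1)}(R)$ to read off all four inequalities at once, whereas you bypass the inverse function and instead integrate the differential inequalities for $\vol$ and for $\vol^{1/\SpDim}$ over $[R,\lambda R]$, freezing the profile factor via monotonicity and closing with the sandwich. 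Your route is slightly more hands-on and makes the role of the exponent $\SpDim$ transparent through the substitution $\phi=\vol^{1/\SpDim}$; the paper's route is a bit more symmetric, since once the scaling of $F^{(-1)}$ is in hand the $\lambda\ge1$ and $0<\lambda\le1$ cases fall out together without a separate duality argument.
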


\begin{proof}
  Define $F(v)=v/\iso(v)$, for $v>0$.
  Owing to our assumption \eqref{eq:M_iso} we have for $k\ge1$
  \begin{equation}
    \label{eq:aux_vol_i}
    F(k v)
    =
    \frac{kv}{\iso(kv)}
    =
    \frac{kv}{(kv)^{\isoexp}}
    \frac{(kv)^{\isoexp}}{\iso(kv)}
    \ge
    k^{\frac{1}{\SpDim}}
    F(v)
    \,.
  \end{equation}
  Similarly for $0<k\le 1$ and $v>0$
  \begin{equation}
    \label{eq:aux_vol_ii}
    F(k v)
    \le
    k^{\frac{1}{\SpDim}}
    F(v)
    \,.
  \end{equation}
  It follows that $F$ is increasing and
  \begin{equation*}
    F(v)
    \to
    0
    \,,
    \quad
    v\to 0+
    \,,
    \qquad
    F(v)
    \to
    +\infty
    \,,
    \quad
    v\to+\infty
    \,.
  \end{equation*}
  In addition, from the monotonicity of $\iso$ we get
  \begin{equation}
    \label{eq:aux_vol_v}
    F(\lambda v)
    \le
    \lambda
    F(v)
    \quad
    \lambda\ge 1
    \,;
    \qquad
    F(\lambda v)
    \ge
    \lambda
    F(v)
    \,,
    \quad
    0<\lambda\le 1
    \,.
  \end{equation}
  Then for given $\lambda$, $R>0$ we let $v=F^{(-1)}(R)$, $k=\lambda^{\SpDim}$ and infer from \eqref{eq:aux_vol_i}--\eqref{eq:aux_vol_v},
  \begin{gather}
    \label{eq:aux_vol_iii}
    \lambda
    F^{(-1)}(R)
    \le
    F^{(-1)}(\lambda R)
    \le
    \lambda^{\SpDim}
    F^{(-1)}(R)
    \,,
    \qquad
    \lambda\ge 1
    \,;
    \\
    \label{eq:aux_vol_iv}
    \lambda^{\SpDim}
    F^{(-1)}(R)
    \le
    F^{(-1)}(\lambda R)
    \le
    \lambda
    F^{(-1)}(R)
    \,,
    \qquad
    0<\lambda\le 1
    \,.
  \end{gather}
  Next we invoke assumptions \eqref{eq:M_grow}, \eqref{eq:M_growup} to infer
  \begin{multline}
    \label{eq:aux_vol_vi}
    c^{-1}
    R
    =
    c^{-1}
    \int_{0}^{\vol(R)}
    \Big(
    \der{\vol}{R}(s)
    \Big)^{-1}
    \di s
    \ge
    \int_{0}^{\vol(R)}
    \frac{\di s}{\iso(s)}
    \\
    \ge
    c
    \int_{0}^{\vol(R)}
    \Big(
    \der{\vol}{R}(s)
    \Big)^{-1}
    \di s
    =
    c
    R
    \,.
  \end{multline}
  However, again from \eqref{eq:M_iso},
  \begin{equation}
    \label{eq:aux_vol_vii}
    \int_{0}^{\vol(R)}
    \frac{\di s}{\iso(s)}
    =
    \int_{0}^{\vol(R)}
    s^{-\isoexp}
    \frac{s^{\isoexp}}{\iso(s)}
    \di s
    \le
    \SpDim
    \frac{\vol(R)}{\iso(\vol(R))}
    =
    \SpDim
    F(\vol(R))
    \,,
  \end{equation}
  while invoking again the monotonicity of $\iso$
  \begin{multline}
    \label{eq:aux_vol_viii}
    \int_{0}^{\vol(R)}
    \frac{\di s}{\iso(s)}
    \ge
    \int_{\frac{\vol(R)}{2}}^{\vol(R)}
    \frac{\di s}{\iso(s)}
    =
    \int_{\frac{\vol(R)}{2}}^{\vol(R)}
    s^{-\isoexp}
    \frac{s^{\isoexp}}{\iso(s)}
    \di s
    \\
    \ge
    \frac{\SpDim}{2}
    (2^{\frac{1}{\SpDim}}-1)
    \frac{\vol(R)}{\iso(\vol(R)/2)}
    \ge
    \frac{\SpDim}{2}
    (2^{\frac{1}{\SpDim}}-1)
    F(
    \vol(R)
    )
    \,.
  \end{multline}
  On combining \eqref{eq:aux_vol_vi}--\eqref{eq:aux_vol_viii}, we get
  \begin{equation}
    \label{eq:aux_vol_j}
    \gamma_{\SpDim,c}^{-1}
    R
    \le
    F(\vol(R))
    \le
    \gamma_{\SpDim,c}
    R
    \,,
    \qquad
    R>0
    \,,
  \end{equation}
  for a suitable $\gamma_{\SpDim,c}>1$. Thus owing to \eqref{eq:aux_vol_iii}--\eqref{eq:aux_vol_iv} we have
  \begin{equation}
    \label{eq:aux_vol_jj}
    \gamma^{-1}
    F^{(-1)}
    (R)
    \le
    \vol(R)
    \le
    \gamma
    F^{(-1)}
    (R)
    \,,
  \end{equation}
  for a suitable $\gamma>1$.

  Our claims \eqref{eq:aux_vol_n}, \eqref{eq:aux_vol_nn} finally follow from \eqref{eq:aux_vol_jj} and again from \eqref{eq:aux_vol_iii}--\eqref{eq:aux_vol_iv}.
\end{proof}

\begin{lemma}
  \label{l:aux_density}
  Under assumptions \eqref{eq:dnf_inc}, \eqref{eq:M_phyp}
  we have for $R>0$
  \begin{equation}
    \label{eq:aux_density_n}
    \gamma^{-1}
    \int_{B_{R}}
    \dnf(d(x))
    \di \msr
    \le
    \dnf(R)
    \vol(R)
    \le
    \int_{B_{R}}
    \dnf(d(x))
    \di \msr
    \,.
  \end{equation}
\end{lemma}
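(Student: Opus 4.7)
The upper estimate $\dnf(R)\vol(R)\le\int_{B_R}\dnf(d(x))\di\msr$ is immediate from the monotonicity of $\dnf$. Being nonincreasing (a consequence of \eqref{eq:dnf_dec} with $\alpha_{1}>0$, and assumed on $[0,1]$ as well for the radial density), one has $\dnf(d(x))\ge\dnf(R)$ for every $x\in B_R$, so integrating over $B_R$ gives the bound.

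For the reverse inequality I would use a dyadic decomposition. Setting $R_k=2^{-k}R$, I write
\[
\int_{B_R}\dnf(d(x))\di\msr
=
\sum_{k\ge 0}\int_{B_{R_k}\setminus B_{R_{k+1}}}\dnf(d(x))\di\msr.
\]
On each shell with $R_{k+1}>1$, hypothesis \eqref{eq:dnf_inc} gives $\dnf(d(x))\le\dnf(R)(R/d(x))^{\alpha_{2}}\le\dnf(R)\,2^{(k+1)\alpha_{2}}$, while Lemma~\ref{l:aux_vold}, whose proof invokes \eqref{eq:M_phyp}, supplies the volume estimate $\vol(R_k)\le c^{-1}2^{-kp}\vol(R)$ via \eqref{eq:aux_vold_n}. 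Multiplying these and summing, the resulting geometric series $\sum_{k\ge 0}2^{k(\alpha_{2}-p)}$ converges precisely because $\alpha_{2}<p$, so the contribution of the outer shells is bounded by $\gamma\,\dnf(R)\vol(R)$.

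The only bit needing care is the contribution of the innermost shells, which together lie in $B_1$ and on which \eqref{eq:dnf_inc} does not directly apply. There $\dnf$ is locally bounded, so this contribution is at most a fixed constant $C_{0}$ independent of $R$. For $R\ge 1$, Lemma~\ref{l:aux_vold} applied with $r=1$ gives $\vold(R)\ge\tilde c\,R^{p-\alpha_{2}}\vold(1)$, so $C_{0}$ is absorbed into $\gamma\,\dnf(R)\vol(R)$; for $R$ in a bounded range the comparison is a triviality by continuity of both sides in $R$. This transition between the scaling regime (handled dyadically) and the bounded regime is essentially the only technical nuisance; the rest is direct geometric-series bookkeeping.
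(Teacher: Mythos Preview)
Your argument is correct. The upper bound is identical to the paper's. For the lower bound you use a dyadic shell decomposition together with the volume growth estimate \eqref{eq:aux_vold_n} (itself a consequence of \eqref{eq:M_phyp}), whereas the paper proceeds more directly: it writes
\[
\int_{B_R}\dnf(d(x))\di\msr=\int_0^R\dnf(\tau)\der{\vol}{\tau}(\tau)\di\tau
\le C\,\dnf(R)R^{\alpha_2}\int_0^R\tau^{-\alpha_2}\der{\vol}{\tau}(\tau)\di\tau,
\]
changes variable to $s=\vol(\tau)$, and bounds the resulting integral $\int_0^{\vol(R)}\vol^{(-1)}(s)^{-\alpha_2}\di s$ by inserting a factor $\vol^{(-1)}(s)^{p-\alpha_2}\le R^{p-\alpha_2}$ and then applying \eqref{eq:M_phyp} directly. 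The paper's route is a one-shot integral estimate that avoids your case split at scale $1$, because it relies on the extended inequality \eqref{eq:aux_dnf_ge1} (valid for all $0<r<s$, via Lemma~\ref{l:aux_elem}) rather than on \eqref{eq:dnf_inc} alone. Your dyadic method is slightly more hands-on but perfectly valid; had you quoted \eqref{eq:aux_dnf_ge1} instead of the raw hypothesis \eqref{eq:dnf_inc}, the separate treatment of the inner core would have been unnecessary.
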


\begin{proof}
  In fact simply by monotonicity of $\dnf$ we have that
  \begin{equation}
    \label{eq:aux_density_i}
    \int_{B_{R}}
    \dnf(d(x))
    \di \msr
    \ge
    \dnf(R)
    \vol(R)
    \,.
  \end{equation}
  Then we calculate, exploiting \eqref{eq:aux_dnf_ge1},
  \begin{multline}
    \label{eq:aux_density_ii}
    \int_{B_{R}}
    \dnf(d(x))
    \di \msr
    =
    \int_{0}^{R}
    \dnf(\tau)
    \der{\vol}{\tau}(\tau)
    \di\tau
    \le
    C
    \int_{0}^{R}
    \big(
    \dnf(R)
    R^{\alpha_{2}}
    \big)
    \tau^{-\alpha_{2}}
    \der{\vol}{\tau}(\tau)
    \di\tau
    \\
    =
    C
    \dnf(R)
    R^{\alpha_{2}}
    \int_{0}^{R}
    \tau^{-\alpha_{2}}
    \der{\vol}{\tau}(\tau)
    \di\tau
    \,.
  \end{multline}
  The integral in \eqref{eq:aux_density_ii} is handled by means of the change of variable $s=\vol(\tau)$, yielding
  \begin{equation}
    \label{eq:aux_density_iii}
    \begin{split}
      \int_{0}^{R}
      \tau^{-\alpha_{2}}
      \der{\vol}{\tau}(\tau)
      \di\tau
      &=
      \int_{0}^{\vol(R)}
      \frac{\di s}{\vol^{(-1)}(s)^{\alpha_{2}}}
      =
      \int_{0}^{\vol(R)}
      \frac{\vol^{(-1)}(s)^{p-\alpha_{2}}}{\vol^{(-1)}(s)^{p}}
      \di s
      \\
      &\le
      \vol^{(-1)}(\vol(R))^{p-\alpha_{2}}
      \int_{0}^{\vol(R)}
      \frac{\di s}{\vol^{(-1)}(s)^{p}}
      \\
      &
      \le
      c^{-1}
      R^{p-\alpha_{2}}
      \frac{\vol(R)}{\vol^{(-1)}(\vol(R))^{p}}
      =
      c^{-1}
      R^{-\alpha_{2}}
      \vol(R)
      \,.
    \end{split}
  \end{equation}
  Note that we used also \eqref{eq:M_phyp}.
  
  Collecting \eqref{eq:aux_density_ii}, \eqref{eq:aux_density_iii} we obtain the claim.
\end{proof}

\begin{lemma}
  \label{l:aux_function}
  Under assumptions \eqref{eq:M_phyp}, \eqref{eq:M_inc}, \eqref{eq:dnf_dec}, \eqref{eq:dnf_inc}, we have for all $s>r>0$
  \begin{equation}
    \label{eq:aux_function_n}
    W(r)
    :=
    \dnf(\lov_{\dnf}(r))
    \lov_{\dnf}(r)^{p}
    r^{-\frac{p-\alpha_{2}}{\SpDim-\alpha_{1}}}
    \le
    \gamma
    W(s)
    \,,
  \end{equation}
  for a suitable $\gamma>1$. In addition for $\lambda \ge 1$, $r>0$
  \begin{equation}
    \label{eq:aux_function_nn}
    W(\lambda r)
    \le
    \gamma_{1}
    \lambda^{d}
    W(r)
    \,,
  \end{equation}
  where actually $\gamma_{1}=\tilde c^{-(p-\alpha_{1})/(p-\alpha_{2})}$ for $\tilde c$ as in \eqref{eq:aux_vold_nn}, and
  \begin{equation*}
    d
    =
    \frac{p-\alpha_{1}}{p-\alpha_{2}}
    -
    \frac{p-\alpha_{2}}{\SpDim-\alpha_{1}}
    >0
    \,.
  \end{equation*}
\end{lemma}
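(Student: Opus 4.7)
The plan is to reduce both claims to estimates on the auxiliary function $\tilde W(R) := \dnf(R)^{1-\beta} R^{p} \vol(R)^{-\beta}$, where $\beta = (p-\alpha_{2})/(\SpDim-\alpha_{1})$, since $W(r) = \tilde W(\dlov(r))$ once we rewrite $\vold(R) = \dnf(R)\vol(R)$ inside the definition. Note that $\beta<1$ because $p-\alpha_{2} < p < \SpDim - \alpha_{1} + (\alpha_{2}-\alpha_{1})$, a fact I will use to keep the exponent $1-\beta$ positive.

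For \eqref{eq:aux_function_n}, fix $r<s$ and write $R=\dlov(r)$, $S=\dlov(s)$, so that $R<S$ by monotonicity of $\vold$. I would compute
\begin{equation*}
  \frac{\tilde W(R)}{\tilde W(S)}
  =
  \Big(\frac{\dnf(R)}{\dnf(S)}\Big)^{1-\beta}
  \Big(\frac{R}{S}\Big)^{p}
  \Big(\frac{\vol(S)}{\vol(R)}\Big)^{\beta}
  \,,
\end{equation*}
and bound each factor using the available assumptions. From \eqref{eq:dnf_dec} plus Lemma~\ref{l:aux_elem} (applied with $f(s)=\dnf(s)s^{\alpha_{1}}$), $\dnf(R)/\dnf(S) \le \gamma (S/R)^{\alpha_{1}}$; from \eqref{eq:M_inc}, $\vol(S)/\vol(R) \le (S/R)^{\SpDim}$. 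Substituting, the total exponent of $S/R$ becomes $\alpha_{1}(1-\beta)-p+\SpDim\beta$, which after using $(\SpDim-\alpha_{1})\beta = p-\alpha_{2}$ collapses to $\alpha_{1}-\alpha_{2}<0$. Since $S/R>1$, this yields $\tilde W(R)/\tilde W(S)\le \gamma$, i.e., the desired inequality.

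For \eqref{eq:aux_function_nn} I would instead expand $W(\lambda r)/W(r)$ directly in the variable $r$:
\begin{equation*}
  \frac{W(\lambda r)}{W(r)}
  =
  \frac{\dnf(\dlov(\lambda r))}{\dnf(\dlov(r))}
  \Big(\frac{\dlov(\lambda r)}{\dlov(r)}\Big)^{p}
  \lambda^{-\beta}
  \,.
\end{equation*}
The first factor is estimated once more through \eqref{eq:dnf_dec} and Lemma~\ref{l:aux_elem} by $\gamma[\dlov(r)/\dlov(\lambda r)]^{\alpha_{1}}$. Then \eqref{eq:aux_vold_nnn} from Lemma~\ref{l:aux_vold}, which uses \eqref{eq:M_phyp} and \eqref{eq:dnf_inc}, gives $\dlov(\lambda r)/\dlov(r)\le \tilde c^{-1/(p-\alpha_{2})}\lambda^{1/(p-\alpha_{2})}$. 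Putting the pieces together produces the factor $\lambda^{(p-\alpha_{1})/(p-\alpha_{2})-\beta} = \lambda^{d}$, with the explicit constant $\gamma_{1}=\tilde c^{-(p-\alpha_{1})/(p-\alpha_{2})}$ stated in the lemma.

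The only thing left to verify is $d>0$, which amounts to $(p-\alpha_{1})(\SpDim-\alpha_{1}) > (p-\alpha_{2})^{2}$; both factors on the left strictly exceed $p-\alpha_{2}$ (since $\alpha_{1}<\alpha_{2}<p<\SpDim$), so the inequality is immediate. The main obstacle is purely bookkeeping: keeping the exponents straight and making sure that the use of the monotonicity hypotheses \eqref{eq:dnf_dec}--\eqref{eq:dnf_inc}, which are stated only for $s>1$, is extended to all positive $s$ through Lemma~\ref{l:aux_elem}, at the cost of a multiplicative constant absorbed into $\gamma$.
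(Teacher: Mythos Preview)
Your argument for \eqref{eq:aux_function_nn} is correct and is exactly what the paper does (it says only that the claim ``is a direct consequence of \eqref{eq:dnf_dec} and of \eqref{eq:aux_vold_nnn}'').

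Your argument for \eqref{eq:aux_function_n}, however, contains a genuine error. You claim that \eqref{eq:dnf_dec} together with Lemma~\ref{l:aux_elem} applied to $f(s)=\dnf(s)s^{\alpha_{1}}$ gives $\dnf(R)/\dnf(S)\le\gamma(S/R)^{\alpha_{1}}$ for $R<S$. But \eqref{eq:dnf_dec} says $\dnf(s)s^{\alpha_{1}}$ is \emph{nonincreasing}, so Lemma~\ref{l:aux_elem} (which requires nondecreasing $f$) does not apply, and the inequality you state is false in general: for $\dnf(s)=s^{-\alpha}$ with $\alpha_{1}<\alpha<\alpha_{2}$ one has $\dnf(R)/\dnf(S)=(S/R)^{\alpha}>(S/R)^{\alpha_{1}}$ with no uniform constant. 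The fix is immediate: use \eqref{eq:dnf_inc} (via \eqref{eq:aux_dnf_ge1}) to get $\dnf(R)/\dnf(S)\le C(S/R)^{\alpha_{2}}$. The total exponent of $S/R$ then becomes
\[
\alpha_{2}(1-\beta)-p+\SpDim\beta
=(p-\alpha_{2})\,\frac{\alpha_{1}-\alpha_{2}}{\SpDim-\alpha_{1}}<0,
\]
and your ratio argument goes through.

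With that correction your route to \eqref{eq:aux_function_n} is a legitimate alternative to the paper's. The paper instead applies Lemma~\ref{l:aux_elem} directly to $W$: it factors
\[
W(s)=\big[\dnf(\dlov(s))\dlov(s)^{\alpha_{2}}\big]\,\big[\dlov(s)s^{-1/(\SpDim-\alpha_{1})}\big]^{p-\alpha_{2}},
\]
and shows each bracket is nondecreasing for $s>\vold(1)$ (the first by \eqref{eq:dnf_inc}, the second by rewriting it as $[\dnf(R)R^{\alpha_{1}}]^{-1/(\SpDim-\alpha_{1})}[R^{\SpDim}/\vol(R)]^{1/(\SpDim-\alpha_{1})}$ and invoking \eqref{eq:dnf_dec}, \eqref{eq:M_inc}); the small-$s$ regime is handled by the power estimate \eqref{eq:aux_elem_n}. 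Your direct ratio bound is arguably more transparent, while the paper's factorization makes the roles of the two density hypotheses more visible.
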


\begin{proof}
  We appeal to Lemma~\ref{l:aux_elem} with $f=W$, $r_{0}=\vold(1)>0$.

  Let us begin by checking that $W$ is nondecreasing in $(\vold(1),+\infty)$; in this case $\dlov(s)>1$.
  Write
  \begin{equation*}
    W(s)
    =
    [
    \dnf(\lov_{\dnf}(s))
    \lov_{\dnf}(s)^{\alpha_{2}}
    ]
    \,
    [
    \lov_{\dnf}(s)
    s^{-\frac{1}{\SpDim-\alpha_{1}}}
    ]^{p-\alpha_{2}}
    \,.
  \end{equation*}
  The first factor is nondecreasing by assumption \eqref{eq:dnf_inc}. As to the second factor, set $R=\lov_{\dnf}(s)$. Then the quantity in square brackets in such a factor equals
  \begin{equation*}
    R
    [
    \dnf(R)
    \vol(R)
    ]^{-\frac{1}{\SpDim-\alpha_{1}}}
    =
    [
    \dnf(R)
    R^{\alpha_{1}}
    ]^{-\frac{1}{\SpDim-\alpha_{1}}}
    \,
    \Big[
    \frac{R^{\SpDim}}{\vol(R)}
    \Big]^{\frac{1}{\SpDim-\alpha_{1}}}
    \,.
  \end{equation*}
  Here, the first factor is nondecreasing by assumption \eqref{eq:dnf_dec}; the second one is nondecreasing by assumption \eqref{eq:M_inc}.

  Next, we note that clearly two constants $C_{1}>C_{0}>0$ exist such that
  \begin{equation}
    \label{eq:aux_function_i}
    C_{0}
    s^{\frac{1}{\SpDim}}
    \le
    \dlov(s)
    \le
    C_{1}
    s^{\frac{1}{\SpDim}}
    \,,
    \qquad
    \text{for $\dlov(s)\le 1$.}
  \end{equation}
  Thus
  \begin{equation}
    \label{eq:aux_function_iii}
    \dnf(1)
    C_{0}
    s^{\frac{p}{\SpDim}-\frac{p-\alpha_{2}}{\SpDim-\alpha_{1}}}
    \le
    W(s)
    \le
    \dnf(0)
    C_{1}
    s^{\frac{p}{\SpDim}-\frac{p-\alpha_{2}}{\SpDim-\alpha_{1}}}
    \,.
  \end{equation}
  It is easy to check that
  \begin{equation*}
    \beta
    =
    \frac{p}{\SpDim}-\frac{p-\alpha_{2}}{\SpDim-\alpha_{1}}
    =
    \frac{
      \SpDim\alpha_{2}
      -
      p\alpha_{1}
    }{
      \SpDim(\SpDim-\alpha_{1})
    }
    >0
    \,,
  \end{equation*}
  owing to our assumptions $\SpDim>p>\alpha_{2}>\alpha_{1}$. Thus for such a $\beta$ and $c_{0}$, $c_{1}$ given in \eqref{eq:aux_function_iii} we have proved \eqref{eq:aux_elem_n}. Our claim \eqref{eq:aux_function_n} follows.

  Our second claim \eqref{eq:aux_function_nn} is a direct consequence of \eqref{eq:dnf_dec} and of \eqref{eq:aux_vold_nnn}.
\end{proof}

\begin{lemma}
  \label{l:sob}
  (\cite{Andreucci:Tedeev:2019,Minerbe:2009})
  Assume that $1<p<\SpDim$, and that \eqref{eq:M_iso}, \eqref{eq:M_grow}, \eqref{eq:M_growup}, \eqref{eq:M_phyp} hold true. Then
  for all $\unk\in W^{1,p}(M)$ we have
  \begin{equation*}
    \Big(
    \int_{M}
    \abs{\unk}^{p^{*}}
    \ipf\big(\vol(d(x))\big)^{-p^{*}}
    \di \msr
    \Big)^{\frac{\SpDim-p}{N}}
    \le
    C
    \int_{M}
    \abs{\grad \unk}^{p}
    \di \msr
    \,,
  \end{equation*}
  where $p^{*}=p\SpDim/(\SpDim-p)$ and $C>0$ is a suitable constant.
\end{lemma}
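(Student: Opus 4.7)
The plan is to combine a local Sobolev inequality on dyadic annuli — obtained from the isoperimetric hypothesis \eqref{eq:M_iso} via a Maz'ya-type co-area/capacity argument — with a Hardy-type inequality furnished by the non-parabolicity assumption \eqref{eq:M_phyp}. By density, one reduces to $\unk \in C_c^{\infty}(M)$ nonnegative, and sets $w(x) = \ipf(\vol(d(x)))^{-1}$, so that the target is $\Norma{\unk w}{L^{p^{*}}(M)}^{p} \le C \Norma{\grad \unk}{L^{p}(M)}^{p}$.

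For the local Sobolev estimate, fix $R_{0}>0$ and let $A_{k}=\{2^{k}R_{0}<d(x)<2^{k+2}R_{0}\}$ for $k\in\Z$. For $v\in C_{c}^{\infty}(A_{k})$, the co-area formula gives $\int_{M}\abs{\grad v}\di\msr=\int_{0}^{\infty}\abs{\bdr\{v>t\}}_{\SpDim-1}\di t$. The isoperimetric bound \eqref{eq:M_iso} controls the boundary measure from below by $\iso$ of the superlevel-set volume, and the factorization $\iso(s)=s^{(\SpDim-1)/\SpDim}/\ipf(s)$, combined with Maz'ya's capacitary principle, yields
\begin{equation*}
\Big(\int_{A_{k}}\abs{v}^{p^{*}}\di\msr\Big)^{p/p^{*}}
\le C\,\ipf(\vol(2^{k}R_{0}))^{p}\int_{A_{k}}\abs{\grad v}^{p}\di\msr,
\end{equation*}
with constant uniform in $k$ thanks to Lemma~\ref{l:aux_vol}, which ensures that $\ipf\circ\vol$ is essentially constant across each dyadic annulus.

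To globalize, take a smooth partition of unity $\{\eta_{k}\}$ subordinate to $\{A_{k}\}$ with $\abs{\grad\eta_{k}}\le C/(2^{k}R_{0})$ and apply the local inequality to $v=\unk\eta_{k}$. Using $w\sim\ipf(\vol(2^{k}R_{0}))^{-1}$ on $A_{k}$, the subadditivity $(\sum a_{k})^{p/p^{*}}\le \sum a_{k}^{p/p^{*}}$ (valid since $p/p^{*}\in(0,1)$), the expansion $\abs{\grad(\unk\eta_{k})}^{p}\le C\abs{\grad\unk}^{p}+C(2^{k}R_{0})^{-p}\abs{\unk}^{p}$, and the bound $(2^{k}R_{0})^{-p}\le C d(x)^{-p}$ on $A_{k}$, one obtains after summing
\begin{equation*}
\Norma{\unk w}{L^{p^{*}}(M)}^{p}
\le C\int_{M}\abs{\grad \unk}^{p}\di\msr
+ C\int_{M}\frac{\abs{\unk}^{p}}{d(x)^{p}}\di\msr.
\end{equation*}
The last term is absorbed via a Hardy inequality $\int_{M}\abs{\unk}^{p}/d(x)^{p}\di\msr\le C\int_{M}\abs{\grad \unk}^{p}\di\msr$ which, under the non-parabolicity hypothesis \eqref{eq:M_phyp}, follows from the classical one-dimensional Hardy inequality applied in polar-type coordinates; the decisive point is that \eqref{eq:M_phyp} makes $\vol(r)/r^{p}$ essentially increasing (cf.\ Lemma~\ref{l:aux_vold}), which is exactly the moment condition required.

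The main obstacle is Step 1: extracting the correct scaling constant $\ipf(\vol(2^{k}R_{0}))^{p}$ in the annular Sobolev estimate and verifying its uniformity in $k$. Once the doubling/reverse-doubling bounds of Lemma~\ref{l:aux_vol} are in place this becomes a careful bookkeeping of the capacitary argument, but the interplay between $\iso$, $\ipf$ and the annular volume must be tracked precisely. A secondary subtlety is controlling the overlap multiplicity of the partition of unity so that the implicit constant does not blow up when passing from $\sum_{k}$ to integrals over $M$. The full technical development is carried out in \cite{Andreucci:Tedeev:2019,Minerbe:2009}, to which we defer for the remaining details.
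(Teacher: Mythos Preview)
Your outline follows the dyadic-annulus plus Hardy strategy of \cite{Minerbe:2009}, whereas the paper takes a different and much shorter route: it does not reprove the inequality at all, but reduces to the weighted Sobolev inequality established in \cite{Andreucci:Tedeev:2019} by symmetrization, after verifying the single integral hypothesis \eqref{eq:sob_i}. That verification is immediate once one observes, via \eqref{eq:aux_vol_j} from Lemma~\ref{l:aux_vol}, that $s/\iso(s)\sim \vol^{(-1)}(s)$, so that \eqref{eq:sob_i} becomes precisely the non-parabolicity condition \eqref{eq:M_phyp}. Your approach is more self-contained in spirit but assembles more machinery (annular Sobolev, partition of unity bookkeeping, Hardy); the paper's approach trades all of this for a one-line hypothesis check plus the external input of \cite{Andreucci:Tedeev:2019}.

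One step in your sketch is not justified as written. The Hardy inequality $\int_{M}\abs{\unk}^{p}d(x)^{-p}\di\msr\le C\int_{M}\abs{\grad\unk}^{p}\di\msr$ does not follow from a one-dimensional Hardy inequality ``in polar-type coordinates'' on a general Riemannian manifold: there is no global polar chart, and the divergence of the natural radial vector field involves the Laplacian of $d$, which is uncontrolled without curvature hypotheses that you are not assuming. The paper proves exactly this Hardy inequality in Theorem~\ref{t:hardy} by rearrangement and the Polya--Szeg\H{o} principle; that proof formally lists \eqref{eq:M_isoup} among its hypotheses, but in fact \eqref{eq:M_isoup} is a consequence of \eqref{eq:M_iso}, \eqref{eq:M_grow}, \eqref{eq:M_growup} through \eqref{eq:aux_vol_j}, so it is available here. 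With the Hardy step replaced by that argument, your route is sound.
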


\begin{proof}
  Our assumptions match the ones made in \cite{Andreucci:Tedeev:2019}, when we show that the inequality
  \begin{equation}
    \label{eq:sob_i}
    \int_{0}^{k}
    s^{-p}
    h(s)^{p}
    \di s
    \le
    \gamma
    k^{-p+1}
    h(k)
    \,,
    \qquad
    k>0
    \,,
  \end{equation}
  follows from our assumptions. Indeed, setting $R=\vol^{(-1)}(s)$, we have, with the notation of the proof of Lemma~\ref{l:aux_vol},
  \begin{equation}
    \label{eq:sob_ii}
    s
    h(s)^{-1}
    =
    \vol(R)
    h(\vol(R))^{-1}
    =
    F(\vol(R))
    \,,
  \end{equation}
  and therefore by \eqref{eq:aux_vol_jj}
  \begin{equation}
    \label{eq:sob_iii}
    \gamma_{\SpDim,c}^{-1}
    \vol^{(-1)}(s)
    \le
    s
    h(s)^{-1}
    \le
    \gamma_{\SpDim,c}
    \vol^{(-1)}(s)
    \,,
    \qquad
    s>0
    \,.
  \end{equation}
  Then \eqref{eq:sob_i} follows from \eqref{eq:sob_iii} and \eqref{eq:M_phyp}.
\end{proof}

The weighted Sobolev inequality in \cite{Minerbe:2009} was proved under the assumption
that the Ricci curvature is nonnegative and the volume growth is such to guarantee
the hyperbolicity of the manifold. Instead in \cite{Andreucci:Tedeev:2019}
we applied, to the same end, a symmetrization technique relying on an
isoperimetrical inequality, as well as an assumption of $p$-hyperbolicity of $M$.

Next we prove the following Hardy inequality, needed below; see also the survey \cite{DAmbrosio:Dipierro:2014} on this subject.

\begin{theorem}[Hardy inequality]
  \label{t:hardy}
  Assume \eqref{eq:M_iso}, \eqref{eq:M_phyp}, \eqref{eq:M_isoup}.
  For any $\unk\in W^{1,p}(M)$ we have
  \begin{equation}
    \label{eq:hardy_n}
    \int_{M}
    \frac{\abs{\unk}^{p}}{d(x)^{p}}
    \di\msr
    \le
    \gamma(\SpDim,p)
    \int_{M}
    \abs{\grad \unk}^{p}
    \di\msr
    \,.
  \end{equation}
\end{theorem}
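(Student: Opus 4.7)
The plan is to reduce this Hardy inequality on $M$ to a one-dimensional Hardy-type inequality on $(0,+\infty)$ via symmetrization with respect to the Riemannian measure. By density we may assume $\unk\in C_c^\infty(M)$ and, on replacing $\unk$ by $|\unk|$, that $\unk\ge 0$. Let $\unk^*$ denote the decreasing rearrangement of $\unk$ with respect to $\msr$. Observe first that $\{1/d(x)^p>t\}=B_{t^{-1/p}}$, so that the $\msr$-rearrangement of $1/d(x)^p$ is $(1/d^p)^*(s)=1/\vol^{(-1)}(s)^p$. The Hardy--Littlewood rearrangement inequality then yields
\begin{equation*}
\int_M \frac{\unk^p}{d(x)^p}\di\msr \le \int_0^{+\infty}\frac{\unk^*(s)^p}{\vol^{(-1)}(s)^p}\di s,
\end{equation*}
while the isoperimetric inequality \eqref{eq:M_iso} together with the co-area formula and Hölder's inequality on level sets gives the Pólya--Szegő type lower bound
\begin{equation*}
\int_0^{+\infty}\big(-(\unk^*)'(s)\big)^p\iso(s)^p\di s \le \int_M |\grad \unk|^p\di\msr.
\end{equation*}

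It thus suffices to prove the following one-dimensional Hardy-type inequality for nonnegative decreasing $g$ of compact support (later applied to $g=\unk^*$):
\begin{equation*}
\int_0^{+\infty}\frac{g(s)^p}{\vol^{(-1)}(s)^p}\di s \le \gamma(\SpDim,p)\int_0^{+\infty}(-g'(s))^p\iso(s)^p\di s.
\end{equation*}
Setting $G(s)=\int_0^s\vol^{(-1)}(\tau)^{-p}\di\tau$, assumption \eqref{eq:M_phyp} gives $G(s)\le c^{-1}s/\vol^{(-1)}(s)^p$. Integrating $\int g^p G'\di s$ by parts (the boundary contributions $[g^p G]_0^{+\infty}$ vanish because $G(0)=0$ and $g$ has compact support) produces
\begin{equation*}
\int_0^{+\infty}\frac{g^p}{\vol^{(-1)}(s)^p}\di s = p\int_0^{+\infty}g^{p-1}(-g')G(s)\di s \le c^{-1}p\int_0^{+\infty}g^{p-1}(-g')\frac{s}{\vol^{(-1)}(s)^p}\di s.
\end{equation*}
An application of Hölder's inequality with conjugate exponents $p/(p-1)$ and $p$, combined with assumption \eqref{eq:M_isoup} in the equivalent form $s/\vol^{(-1)}(s)\le c^{-1}\iso(s)$, bounds the right-hand side by
\begin{equation*}
c^{-2}p\Big(\int_0^{+\infty}\frac{g^p}{\vol^{(-1)}(s)^p}\di s\Big)^{(p-1)/p}\Big(\int_0^{+\infty}(-g')^p\iso(s)^p\di s\Big)^{1/p},
\end{equation*}
and the standard absorption argument delivers the one-dimensional Hardy inequality with constant $(c^{-2}p)^p$. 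Combining the three steps yields \eqref{eq:hardy_n} for $\unk\in C_c^\infty(M)$; the extension to general $\unk\in W^{1,p}(M)$ follows by approximation and Fatou's lemma.

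The main obstacle is the one-dimensional Hardy step: the integration by parts against the auxiliary function $G$ must be set up so that the two manifold assumptions \eqref{eq:M_phyp} and \eqref{eq:M_isoup} enter in exactly the places needed to bound $G$ and to convert $s/\vol^{(-1)}(s)$ into $\iso(s)$, respectively, while simultaneously producing the $g^p/\vol^{(-1)}(s)^p$ factor on the right-hand side in order to absorb it on the left. The remaining ingredients (Hardy--Littlewood, Pólya--Szegő, density of $C_c^\infty$) are standard once the isoperimetric framework \eqref{eq:M_iso} is in place.
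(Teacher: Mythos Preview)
Your proof is correct and follows essentially the same route as the paper: Hardy--Littlewood rearrangement to reduce to the one-dimensional integral $\int_0^\infty u^*(s)^p/\vol^{(-1)}(s)^p\,ds$, integration by parts against $G(s)=\int_0^s \vol^{(-1)}(\tau)^{-p}\,d\tau$ together with \eqref{eq:M_phyp}, H\"older's inequality, the use of \eqref{eq:M_isoup} to convert $s/\vol^{(-1)}(s)$ into $h(s)$, and finally the P\'olya--Szeg\H{o} principle based on \eqref{eq:M_iso}. The only difference is expository ordering; the argument is the same.
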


\begin{proof}
  We may assume $\unk\ge 0$.
  Introduce for $\lambda\ge 0$ the standard rearrangement function
  \begin{equation}
    \label{eq:rearr}
    \unk^{*}(s)
    =
    \inf\{\lambda \mid \msr_{\lambda}<s\}
    \,,
    \quad
    \msr_{\lambda}
    =
    \abs{\{x\in M \mid \abs{\unk(x)}> \lambda\}}
    \,.
  \end{equation}
  We have
  \begin{equation}
    \label{eq:hardy_i}
    \int_{M}
    \frac{\unk^{p}}{d(x)^{p}}
    \di\msr
    \le
    \int_{0}^{+\infty}
    \unk^{*}(s)^{p}
    [d(\cdot)^{-p}]^{*}(s)
    \di s
    \,.
  \end{equation}
  On the other hand
  \begin{equation*}
    \abs{\{d(x)^{-p}>\lambda\}}
    =
    \abs{B_{\lambda^{-\frac{1}{p}}}}
    =
    \vol(\lambda^{-\frac{1}{p}})
    \,.
  \end{equation*}
  Therefore \eqref{eq:hardy_i} gives on integrating by parts
  \begin{equation}
    \label{eq:hardy_ii}
    \int_{M}
    \frac{\unk^{p}}{d(x)^{p}}
    \di\msr
    \le
    \int_{0}^{+\infty}
    \frac{\unk^{*}(s)^{p}}{\vol^{(-1)}(s)^{p}}
    \di s
    =
    p
    \int_{0}^{+\infty}
    \unk^{*}(s)^{p-1}
    [-\unk_{s}^{*}(s)]
    \int_{0}^{s}
    \frac{\di\tau}{\vol^{(-1)}(\tau)^{p}}
    \di s
    \,.
  \end{equation}
  Next we apply our assumption \eqref{eq:M_phyp} in \eqref{eq:hardy_ii} and
  after applying H\"older inequality we arrive at
  \begin{equation}
    \label{eq:hardy_iii}
    \int_{0}^{+\infty}
    \frac{\unk^{*}(s)^{p}}{\vol^{(-1)}(s)^{p}}
    \di s
    \le
    \gamma
    \Big(
    \int_{0}^{+\infty}
    \frac{\unk^{*}(s)^{p}}{\vol^{(-1)}(s)^{p}}
    \di s
    \Big)^{\frac{p-1}{p}}
    \Big(
    \int_{0}^{+\infty}
    [-\unk^{*}_{s}(s)]^{p}
    \frac{s^{p}}{\vol^{(-1)}(s)^{p}}
    \di s
    \Big)^{\frac{1}{p}}
    \,.
  \end{equation}
  This immediately yields when we invoke \eqref{eq:M_isoup}
  \begin{multline}
    \label{eq:hardy_iv}
    \int_{0}^{+\infty}
    \frac{\unk^{*}(s)^{p}}{\vol^{(-1)}(s)^{p}}
    \di s
    \le
    \gamma
    \int_{0}^{+\infty}
    [-\unk^{*}_{s}(s)]^{p}
    \frac{s^{p}}{\vol^{(-1)}(s)^{p}}
    \di s
    \\
    \le
    \gamma
    \int_{0}^{+\infty}
    [-\unk^{*}_{s}(s)]^{p}
    h(s)^{p}
    \di s
    \le
    \gamma
    \int_{M}
    \abs{\grad\unk}^{p}
    \di\msr
    \,,
  \end{multline}
  that is \eqref{eq:hardy_n}, by Polya-Szego principle (see \cite{Andreucci:Tedeev:2019}).
\end{proof}

We state first an estimate where the density function $\dnf$ does not appear.

\begin{lemma}
  \label{l:emb_old}
  Let $\unk\in W^{1,p}(M)$, $0<r<q\le \SpDim p/(\SpDim-p)$. Then
  \begin{equation}
    \label{eq:emb_old_n}
    \int_{M}
    \abs{\unk}^{q}
    \di\msr
    \le
    \gamma
    \ipf(S_{q})^{q}
    S_{q}^{1+\frac{q}{\SpDim}-\frac{q}{p}}
    \norma{\grad \unk}{L^{p}(M)}^{q}
    \,,
  \end{equation}
  where
  \begin{equation}
    \label{eq:emb_old_nn}
    S_{q}
    =
    \big(
    \int_{M}
    \abs{\unk}^{r}
    \di\msr
    \Big)^{\frac{q}{q-r}}
    \Big(
    \int_{M}
    \abs{\unk}^{q}
    \di\msr
    \Big)^{-\frac{r}{q-r}}
    \,.
  \end{equation}
\end{lemma}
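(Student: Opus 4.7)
The plan is to reduce to a one-dimensional problem by rearrangement, split the $L^{q}$ integral at a carefully chosen level, and absorb the tail. By the Polya--Szego principle associated with the isoperimetric function $\iso$ (a consequence of \eqref{eq:M_iso}; cf.\ \cite{Andreucci:Tedeev:2019}), one may replace $\unk$ by its nonincreasing rearrangement $v$ on $[0,+\infty)$. This preserves $\int_M|\unk|^r\di\msr$ and $\int_M|\unk|^q\di\msr$, and the weighted Sobolev inequality of Lemma~\ref{l:sob} translates into the one-dimensional form
\begin{equation*}
  \int_0^{+\infty} v(s)^{p^{*}}\ipf(s)^{-p^{*}}\di s
  \le
  C\,\norma{\grad \unk}{L^{p}(M)}^{p^{*}}.
\end{equation*}

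Setting $A:=\int_M|\unk|^r\di\msr$, $X:=\int_M|\unk|^q\di\msr$, and splitting $X=\int_0^{s_0}v^q\di s+\int_{s_0}^{+\infty}v^q\di s$ for a parameter $s_0>0$ to be chosen, I would bound the head by H\"older with conjugate exponents $p^{*}/q$ and $p^{*}/(p^{*}-q)$ (valid since $q\le p^{*}$) together with the monotonicity of $\ipf$:
\begin{equation*}
  \int_0^{s_0}\!\! v^{q}\di s
  \le
  \Bigl(\int_0^{s_0}\!\!v(s)^{p^{*}}\ipf(s)^{-p^{*}}\di s\Bigr)^{q/p^{*}}\!\!
  \Bigl(s_0\,\ipf(s_0)^{qp^{*}/(p^{*}-q)}\Bigr)^{1-q/p^{*}}
  \le
  \gamma_{1}\,\ipf(s_0)^q s_0^{1-q/p^{*}}\norma{\grad \unk}{L^{p}(M)}^{q}.
\end{equation*}
For the tail, the monotonicity of $v$ yields $\int_{s_0}^{+\infty}v^q\di s\le v(s_0)^{q-r}A$, and the Chebyshev-type inequality $s_0 v(s_0)^r\le\int_0^{s_0}v^r\di s\le A$ gives $\int_{s_0}^{+\infty}v^q\di s\le A^{q/r}/s_0^{(q-r)/r}$.

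A direct computation from \eqref{eq:emb_old_nn} shows $S_q^{(q-r)/r}=A^{q/r}/X$; thus choosing $s_0=cS_q$ with $c:=2^{r/(q-r)}$ makes the tail equal exactly $X/2$, which is absorbed into the left-hand side. This leaves $X\le 2\gamma_{1}\ipf(cS_q)^q(cS_q)^{1-q/p^{*}}\norma{\grad \unk}{L^{p}(M)}^{q}$. To remove the factor $c$ from the argument of $\ipf$, I write $\ipf(s)=s^{(\SpDim-1)/\SpDim}/\iso(s)$ and use the monotonicity of $\iso$ (from \eqref{eq:M_iso}) to obtain, for any $c\ge 1$,
\begin{equation*}
  \ipf(cs)
  =
  c^{(\SpDim-1)/\SpDim}\,\frac{\iso(s)}{\iso(cs)}\,\ipf(s)
  \le
  c^{(\SpDim-1)/\SpDim}\ipf(s),
\end{equation*}
which produces \eqref{eq:emb_old_n} with $\gamma$ depending only on $p$, $q$, $r$, $\SpDim$, and the Sobolev constant $C$.

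The main obstacle, I expect, is recognizing that $S_q$ as defined in \eqref{eq:emb_old_nn} is precisely the scale at which the split makes the tail a fixed fraction of $X$, independent of the particular $\unk$; once this balance is spotted, the argument reduces to routine H\"older bookkeeping together with the automatic doubling property of $\ipf$ that comes from its very definition.
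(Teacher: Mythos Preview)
Your argument is clean and the balance you identify (splitting at $s_{0}=cS_{q}$ so the tail is a fixed fraction of $X$) is exactly the right mechanism. However, there is a genuine gap in the hypotheses you are using. You invoke Lemma~\ref{l:sob} (in its one-dimensional form) to control the head $\int_{0}^{s_{0}}v^{q}\di s$, and that lemma requires the full package \eqref{eq:M_iso}--\eqref{eq:M_phyp}, in particular the non-parabolicity condition \eqref{eq:M_phyp}. Lemma~\ref{l:emb_old} is stated---and, more importantly, \emph{used}---under \eqref{eq:M_iso} alone: its Corollary~\ref{co:emb_old_p} is the key embedding in the proof of Theorem~\ref{t:fsp}, which explicitly avoids \eqref{eq:M_phyp} (see the paragraph following the statement of Theorem~\ref{t:fsp}). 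Under \eqref{eq:M_iso} only, the weighted Sobolev inequality $\int v^{p^{*}}\ipf^{-p^{*}}\le C\norma{\grad\unk}{p}^{p^{*}}$ can fail (take an isoperimetric profile for which $\int_{0}^{k}\vol^{(-1)}(t)^{-p}\di t$ diverges as $k\to\infty$), so your head bound is not available at the stated level of generality. A secondary imprecision: Lemma~\ref{l:sob} on $M$ does not ``translate'' to the rearranged inequality by Hardy--Littlewood, since the rearrangement inequality goes the wrong way; what you really need is the one-dimensional version proved in \cite{Andreucci:Tedeev:2019}, which again rests on \eqref{eq:M_phyp}.

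The paper avoids this by a different---and more elementary---treatment of the head. It splits at a \emph{level} $k$ rather than a point $s_{0}$, writing $P_{q}=I_{1}+I_{2}+I_{3}$ with $I_{1}=\int_{0}^{\msr_{k}}(\unk^{*}-k)^{q}\di s$; Chebyshev handles $I_{2}+I_{3}$ and fixes $\msr_{k}=\gamma S_{q}$ exactly as in your tail step. For $I_{1}$ (treating only $q\le p$, which is the case needed later) it uses H\"older to pass to $\int_{0}^{\msr_{k}}(\unk^{*}-k)^{p}\di s$ and then the pointwise Talenti-type bound together with the monotonicity of $s\mapsto s/\iso(s)$, which comes for free from \eqref{eq:M_iso}. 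This yields $P_{q}\le\gamma\,\msr_{k}^{1-q/p}[\msr_{k}/\iso(\msr_{k})]^{q}\norma{\grad\unk}{p}^{q}$ directly from Polya--Szego, with no appeal to a global weighted Sobolev inequality. Your route, by contrast, works uniformly in $q\in(r,p^{*}]$ and is arguably slicker, but it genuinely buys that extra range at the cost of the extra hypotheses; as written it does not prove the lemma.
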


\begin{proof}
  We confine ourselves to the case $q\le p$, which is the one of our interest here. The case $q>p$ can be proved reasoning as in \cite{Andreucci:Tedeev:2000}.
  \\
  Introduce the standard rearrangement function as in \eqref{eq:rearr}.
  Then write for convenience of notation
  \begin{equation*}
    P_{s}
    =
    \int_{M}
    \abs{\unk(x)}^{s}
    \di\msr
    \,,
    \qquad
    s>0
    \,.
  \end{equation*}
  We have for a $k>0$ to be selected presently
  \begin{multline}
    \label{eq:emb_old_i}
    P_{q}
    =
    \int_{0}^{\msr_{0}}
    \unk^{*}(s)^{q}
    \di s
    \le
    \gamma(q)
    \int_{0}^{\msr_{k}}
    (\unk^{*}(s)-k)^{q}
    \di s
    +
    \gamma(q)
    k^{q}
    \msr_{k}
    +
    \int_{\msr_{k}}^{\msr_{0}}
    \unk^{*}(s)^{q}
    \di s
    \\
    =:
    I_{1}+I_{2}+I_{3}
    \,.
  \end{multline}
  Next we invoke Chebychev inequality
  \begin{equation*}
    k^{r}
    \msr_{k}
    \le
    P_{r}
    \,,
  \end{equation*}
  to bound
  \begin{equation}
    \label{eq:emb_old_ii}
    I_{2}+I_{3}
    \le
    \gamma
    \msr_{k}^{1-\frac{q}{r}}
    P_{r}^{\frac{q}{r}}
    +
    k^{q-r}
    \int_{\msr_{k}}^{\msr_{0}}
    \unk^{*}(s)^{r}
    \di s
    \le
    \gamma
    \msr_{k}^{1-\frac{q}{r}}
    P_{r}^{\frac{q}{r}}
    =
    \frac{1}{2}
    P_{q}
    \,.
  \end{equation}
  The last equality in \eqref{eq:emb_old_ii} is our choice of
  $k$, which amounts to $\msr_{k}=\gamma S_{q}$.  
  Note that we may assume $\msr_{0}$ as large as necessary, by
  approximating $\unk$ while keeping all the involved integral quantities
  stable. Thus we can safely assume that such a value of $k$
  exists. Hence we absorb $I_{2}+I_{3}$ on the left hand side of
  \eqref{eq:emb_old_i}. We then reason as in \cite{Talenti:1976} to obtain
  \begin{multline}
    \label{eq:emb_old_iii}
    P_{q}
    \le
    \gamma
    \int_{0}^{\msr_{k}}
    (\unk^{*}(s)-k)^{q}
    \di s
    \le
    \gamma
    \msr_{k}^{1-\frac{q}{p}}
    \Big(
    \int_{0}^{\msr_{k}}
    (\unk^{*}(s)-k)^{p}
    \di s
    \Big)^{\frac{q}{p}}
    \\
    \le
    \gamma
    \msr_{k}^{1-\frac{q}{p}}
    \Big(
    \int_{0}^{\msr_{k}}
    [-\unk^{*}_{s}(s)]^{p}
    h(s)^{p}
    [s
    h(s)^{-1}]^{p}
    \di s
    \Big)^{\frac{q}{p}}
    \\
    \le
    \gamma
    \msr_{k}^{1-\frac{q}{p}}
    [\msr_{k} h(\msr_{k})^{-1}]^{q}
    \Big(
    \int_{M}
    \abs{\grad \unk}^{p}
    \di\msr
    \Big)^{\frac{q}{p}}
    \,.
  \end{multline}
  We have exploited here the fact that $t\mapsto t h(t)^{-1}$ is
  increasing as it follows from our assumption that $\ipf$ is nondecreasing.

  Finally \eqref{eq:emb_old_n} follows from \eqref{eq:emb_old_iii} and
  from our choice $\msr_{k}=\gamma S_{q}$.
\end{proof}

\begin{corollary}
  \label{co:emb_old_p}
  Let $\unk\in W^{1,p}(M)$ and $0<r<p$. Then
  \begin{multline}
    \label{eq:emb_old_p_n}
    \int_{M}
    \abs{\unk}^{p}
    \di\msr
    \le
    \gamma
    \ipf(\msr(\supp \unk))^{\frac{p\SpDim(p-r)}{\SpDim(p-r)+r p}}
    \Big(
    \int_{M}
    \abs{\unk}^{r}
    \di\msr
    \Big)^{\frac{p^{2}}{\SpDim(p-r)+r p}}
    \\
    \times
    \Big(
    \int_{M}
    \abs{\grad \unk}^{p}
    \di\msr
    \Big)^{\frac{\SpDim(p-r)}{\SpDim(p-r)+r p}}
    \,.
  \end{multline}
\end{corollary}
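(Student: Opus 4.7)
The plan is to apply Lemma~\ref{l:emb_old} with the choice $q=p$ and then eliminate $S_p$ by exploiting Hölder's inequality. With $q=p$ the exponent $1+q/\SpDim-q/p$ reduces to $p/\SpDim$, so Lemma~\ref{l:emb_old} yields
\begin{equation*}
  \int_M \abs{\unk}^p\di\msr
  \le
  \gamma\,\ipf(S_p)^{p}\,S_p^{p/\SpDim}\,\Big(\int_M\abs{\grad\unk}^p\di\msr\Big),
\end{equation*}
with
\begin{equation*}
  S_p
  =
  \Big(\int_M\abs{\unk}^r\di\msr\Big)^{\!p/(p-r)}
  \Big(\int_M\abs{\unk}^p\di\msr\Big)^{\!-r/(p-r)}.
\end{equation*}

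First I would bound $\ipf(S_p)$ in terms of the support measure. By Hölder's inequality applied on $\supp\unk$ with exponents $p/r$ and $p/(p-r)$,
\begin{equation*}
  \int_M\abs{\unk}^r\di\msr
  \le
  \Big(\int_M\abs{\unk}^p\di\msr\Big)^{\!r/p}\msr(\supp\unk)^{(p-r)/p},
\end{equation*}
which after rearrangement gives exactly $S_p\le \msr(\supp\unk)$. Since $\ipf$ is nondecreasing by \eqref{eq:M_iso}, we conclude $\ipf(S_p)\le \ipf(\msr(\supp\unk))$.

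Next I would substitute the explicit form of $S_p$ into the factor $S_p^{p/\SpDim}$ and collect powers of $P_p:=\int_M\abs{\unk}^p\di\msr$ on the left hand side. Writing $P_r:=\int_M\abs{\unk}^r\di\msr$ and $G:=\int_M\abs{\grad\unk}^p\di\msr$, the inequality becomes
\begin{equation*}
  P_p^{1+\frac{rp}{\SpDim(p-r)}}
  \le
  \gamma\,\ipf(\msr(\supp\unk))^{p}\,P_r^{\frac{p^2}{\SpDim(p-r)}}\,G.
\end{equation*}
Raising both sides to the power $\SpDim(p-r)/(\SpDim(p-r)+rp)$ produces precisely the exponents appearing in \eqref{eq:emb_old_p_n}, completing the derivation.

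There is no real obstacle here: the argument is purely algebraic once Lemma~\ref{l:emb_old} is in hand, and the only substantive observation is that Hölder's inequality forces $S_p\le \msr(\supp\unk)$, which permits us to replace the a priori unknown quantity $\ipf(S_p)$ by the geometric quantity $\ipf(\msr(\supp\unk))$ that appears in the statement.
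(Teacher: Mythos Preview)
Your proof is correct and follows essentially the same approach as the paper: apply Lemma~\ref{l:emb_old} with $q=p$, use H\"older's inequality on $\supp\unk$ to deduce $S_p\le\msr(\supp\unk)$ (hence $\ipf(S_p)\le\ipf(\msr(\supp\unk))$ by monotonicity of $\ipf$), and then carry out the algebraic rearrangement. The paper states this more tersely, but the substance is identical.
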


\begin{proof}
  We select $q=p$ in Lemma~\ref{l:emb_old}. The statement follows
  from an elementary computation, when we also bound by means of
  H\"older's inequality
  \begin{equation}
    \label{eq:emb_old_p_i}
    S_{q}
    \le
    \Big[
    \msr(\supp \unk)^{1-\frac{r}{p}}
    \Big(
    \int_{M}
    \abs{\unk}^{p}
    \di\msr
    \Big)^{\frac{r}{p}}
    \Big]^{\frac{p}{p-r}}
    \Big(
    \int_{M}
    \abs{\unk}^{p}
    \di\msr
    \Big)^{-\frac{r}{p-r}}
    =
    \msr(\supp \unk)
    \,.
  \end{equation}
\end{proof}

Next we state some weighted estimates where the estimated norms involve the weight $\dnf$.
We denote in the following for $k> 0$
\begin{equation}
  \label{eq:measd}
  \msd(A)
  =
  \int_{A}
  \dnf(d(x))
  \di \msr
  \,,
  \qquad
  \measd(k)
  =
  \msd(\{u>k\})
  \,.
\end{equation}

\begin{lemma}[Faber-Krahn type estimate]
  \label{l:fk}
  Assume that $1<p<\SpDim$, and that the assumptions of Lemma~\ref{l:sob} and of Theorem~\ref{t:hardy} hold true. We further assume  \eqref{eq:dnf_inc}, \eqref{eq:close}.
  \\
  Then
  for all $\unk\in W^{1,p}(M)$, $k>0$ we have
  \begin{equation}
    \label{eq:fk_n}
    \int_{\{u>k\}}
    \dnf(d(x))
    (u-k)^{p}
    \di \msr
    \le
    \gamma
    \dnf(\dlov(\measd(k)))
    \dlov(\measd(k))^{p}
    \int_{\{u>k\}}
    \abs{\grad u}^{p}
    \di \msr
    \,.
  \end{equation}
\end{lemma}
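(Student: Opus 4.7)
The plan is to combine H\"older's inequality with the weighted Sobolev estimate of Lemma~\ref{l:sob}, and then to evaluate the remaining density-volume integral by splitting the level set $\{\unk>k\}$ across the ball of weighted radius $R_{0}:=\dlov(\measd(k))$. Set $v=(\unk-k)_{+}$ and $A=\{\unk>k\}$, so that $\msd(A)=\vold(R_{0})\le\msd(B_{R_{0}})$ by Lemma~\ref{l:aux_density}.

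The first move is the factorisation
\begin{equation*}
  \dnf(d(x))v^{p}
  =
  \big[v\ipf(\vol(d(x)))^{-1}\big]^{p}
  \cdot
  \big[\dnf(d(x))\ipf(\vol(d(x)))^{p}\big],
\end{equation*}
followed by H\"older's inequality with exponents $p^{*}/p$ and $N/p$, which are conjugate since $p^{*}=Np/(N-p)$. The first factor raised to $p/p^{*}$ is exactly the left hand side of Lemma~\ref{l:sob} and is thus controlled by $\gamma\int_{A}\abs{\grad\unk}^{p}\di\msr$. The proof therefore reduces to showing
\begin{equation*}
  I
  :=
  \int_{A}\dnf(d(x))^{N/p}\ipf(\vol(d(x)))^{N}\di\msr
  \le
  \gamma\,\dnf(R_{0})^{N/p}R_{0}^{N}.
\end{equation*}

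The key algebraic identity for $I$ is $g(r):=\dnf(r)^{N/p}\ipf(\vol(r))^{N}=\phi(r)^{N/p}\ipf(\vol(r))^{-p^{*}}$, where $\phi(r):=\dnf(r)\ipf(\vol(r))^{p^{*}}$ is the function in~\eqref{eq:close}. Both $\phi$ and $\ipf(\vol(\cdot))^{-p^{*}}$ are nonincreasing (the former by~\eqref{eq:close}, past the threshold $s_{0}$), so $g$ is nonincreasing in $r$, and so is $g/\dnf=\phi^{N/p-1}$. Split $I=I_{1}+I_{2}$ along $B_{R_{0}}$ and its complement: trivially $I_{1}\le\int_{B_{R_{0}}}g(d(x))\di\msr$; for $I_{2}$, on $A\setminus B_{R_{0}}$ we have $d(x)>R_{0}$, hence $(g/\dnf)(d(x))\le(g/\dnf)(R_{0})$, yielding
\begin{equation*}
  I_{2}
  =
  \int_{A\setminus B_{R_{0}}}\frac{g}{\dnf}\di\msd
  \le
  \frac{g(R_{0})}{\dnf(R_{0})}\msd(A)
  \le
  \frac{g(R_{0})}{\dnf(R_{0})}\msd(B_{R_{0}})
  \le
  \int_{B_{R_{0}}}g(d(x))\di\msr,
\end{equation*}
where the final step applies the monotonicity of $g/\dnf$ once more. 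Consequently $I\le 2\int_{B_{R_{0}}}g(d(x))\di\msr$.

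It remains to bound $\int_{B_{R_{0}}}g(d(x))\di\msr$ in radial coordinates. Combining~\eqref{eq:M_growup} and~\eqref{eq:M_isoup} gives the pointwise bound $\ipf(\vol(r))^{N}\der{\vol}{r}(r)\le\gamma r^{N-1}$, while~\eqref{eq:dnf_inc} gives $\dnf(r)^{N/p}\le\dnf(R_{0})^{N/p}(R_{0}/r)^{\alpha_{2}N/p}$ for $r\le R_{0}$. Since $\alpha_{2}<p$, the resulting radial integral $\int_{0}^{R_{0}}r^{N-1-\alpha_{2}N/p}\di r$ converges and yields precisely the required constant $\gamma\,\dnf(R_{0})^{N/p}R_{0}^{N}$. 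The main technical nuisance I expect is the threshold $s_{0}$ in~\eqref{eq:close}: $\phi$ need not be monotone on $[0,s_{0}]$, but it is continuous and bounded there, so replacing $\phi$ by its nonincreasing envelope $\tilde\phi(r):=\sup_{t\ge r}\phi(t)$ introduces only a bounded multiplicative factor, which is absorbed into $\gamma$.
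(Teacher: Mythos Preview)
Your argument is correct, but it follows a genuinely different route from the paper's. The paper splits the left-hand integral itself across $B_{R}$ and its complement: on $A_{k}\cap B_{R}$ it writes $\dnf(d)(u-k)^{p}=\dnf(d)d^{p}\cdot d^{-p}(u-k)^{p}$, bounds $\dnf(d)d^{p}\le\gamma\dnf(R)R^{p}$ via~\eqref{eq:dnf_inc}, and then applies the Hardy inequality of Theorem~\ref{t:hardy}; on $A_{k}\setminus B_{R}$ it uses H\"older together with Lemma~\ref{l:sob} and~\eqref{eq:close}, and finally equates the two contributions to pick $R=\dlov(\measd(k))$. You instead apply H\"older and the weighted Sobolev inequality to the whole integral first, and only afterwards split the remaining density integral $I$; the outer piece of $I$ is handled by the monotonicity of $\phi^{(N-p)/p}$ from~\eqref{eq:close}, and the inner piece by a direct radial computation using~\eqref{eq:M_growup}, \eqref{eq:M_isoup} and~\eqref{eq:dnf_inc}. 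In particular you never invoke Hardy's inequality itself (though you do use its hypothesis~\eqref{eq:M_isoup}), which makes your proof slightly more self-contained; on the other hand, the paper's decomposition of the left-hand side, with Hardy on the inner ball, is the template that carries over directly to the $s>p$ companion result Lemma~\ref{l:fks}, where a second H\"older step is interleaved with Hardy in a way that your global-H\"older-first scheme would have to be reworked to accommodate.
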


\begin{proof}
  We split the integral between $B(R)$ and its complement; let $A_{k}=\{u>k\}$. We first have from Lemma~\ref{l:sob},
  \begin{equation}
    \label{eq:fk_i}
    \begin{split}
      &\int_{A_{k}\setminus B_{R}}
      \dnf(d(x))
      (u-k)^{p}
      \di \msr
      \le
      \Big(
      \int_{A_{k}}
      (u-k)^{p^{*}}
      \ipf\big(\vol(d(x))\big)^{-p^{*}}
      \di \msr
      \Big)^{\frac{\SpDim-p}{\SpDim}}
      \times
      \\
      &\qquad\Big(
      \int_{A_{k}\setminus B_{R}}
      \dnf(d(x))^{\frac{\SpDim}{p}}
      \ipf\big(\vol(d(x))\big)^{\SpDim}
      \di \msr
      \Big)^{\frac{p}{\SpDim}}
      \\
      &\quad
      \le
      \gamma
      \dnf(R)^{\frac{\SpDim-p}{\SpDim}}
      \ipf(\vol(R))^{p}
      \measd(k)^{\frac{p}{\SpDim}}
      \Big(
      \int_{A_{k}}
      \abs{\grad u}^{p}
      \di \msr
      \Big)
      \\
      &\quad
      \le
      \gamma
      \dnf(R)^{\frac{\SpDim-p}{\SpDim}}
      R^{p}
      \vol(R)^{-\frac{p}{\SpDim}}
      \measd(k)^{\frac{p}{\SpDim}}
      \Big(
      \int_{A_{k}}
      \abs{\grad u}^{p}
      \di \msr
      \Big)
      =:
      I_{1}
      \,.
    \end{split}
  \end{equation}
  We used assumption \eqref{eq:close} and the estimate \eqref{eq:aux_vol_j}, at least for $R>s_{0}$ as in \eqref{eq:close}. For $R\le s_{0}$ we simply note that
  \begin{equation*}
    c
    \le
    \dnf(s)
    \ipf(\vol(s))^{p^{*}}
    \le
    c^{-1}
    \,,
    \qquad
    0<s\le s_{0}
    \,,
  \end{equation*}
  for a suitable $0<c<1$, as $\ipf(0+)>0$ due to the locally Euclidean behavior of the Riemannian metric. 

  Next we have by means of Hardy inequality \eqref{eq:hardy_n}, and also taking into account \eqref{eq:aux_dnf_ge1},
  \begin{multline}
    \label{eq:fk_ii}
    \int_{A_{k}\cap B(R)}
    \dnf(d(x))
    (u-k)^{p}
    \di \msr
    =
    \int_{A_{k}\cap B(R)}
    \dnf(d(x))
    d(x)^{p}
    d(x)^{-p}
    (u-k)^{p}
    \di \msr
    \\
    \le
    \gamma
    \dnf(R)
    R^{p}
    \int_{A_{k}}
    \abs{\grad u}^{p}
    \di \msr
    =:
    I_{2}
    \,.
  \end{multline}
  Finally we select $R$ so that, essentially, $I_{1}$ and $I_{2}$ contribute the same quantity, i.e.,
  \begin{equation}
    \label{eq:fk_iii}
    \dnf(R)^{\frac{\SpDim-p}{\SpDim}}
    R^{p}
    \vol(R)^{-\frac{p}{\SpDim}}
    \measd(k)^{\frac{p}{\SpDim}}
    =
    \dnf(R)
    R^{p}
    \,,  
  \end{equation}
  or as one can immediately see
  \begin{equation}
    \label{eq:fk_iv}
    \vold(R)
    =
    \measd(k)
    \,.
  \end{equation}
  The claim follows.
\end{proof}

\begin{lemma}
  \label{l:fks}
  Assume that $1<p<\SpDim$, and that the assumptions of Lemma~\ref{l:sob}, Theorem~\ref{t:hardy} and \eqref{eq:close} hold true. Then
  for all $\unk\in W^{1,p}(M)$ and $k>0$ we have
  \begin{multline}
    \label{eq:fks_n}
    \int_{\{u>k\}}
    \dnf(d(x))
    (u-k)^{s}
    \di \msr
    \le
    \\
    \gamma
    [
    \dnf(\dlov(\measd(k)))
    \dlov(\measd(k))^{p}
    ]^{\frac{s}{p}}
    \measd(k)^{1-\frac{s}{p}}
    \Big(
    \int_{\{u>k\}}
    \abs{\grad u}^{p}
    \di \msr
    \Big)^{\frac{s}{p}}
    \,,
  \end{multline}
  provided $p<s<p^{*}$ and that, in addition, for a given $C>0$
  \begin{equation}
    \label{eq:fks_nn}
    \dnf(R)
    R^{\SpDim-\frac{s(\SpDim-p)}{p}}
    \le
    C
    \dnf(R_{1})
    R_{1}^{\SpDim-\frac{s(\SpDim-p)}{p}}
    \,,
    \qquad
    R_{1}>R>0
    \,.
  \end{equation}
\end{lemma}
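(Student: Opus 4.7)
The plan is to mimic the proof of Lemma~\ref{l:fk} by splitting $\int_{\{u>k\}}\dnf(d)(u-k)^{s}\di\msr$ as $I_{1}+I_{2}$, the contributions of $A_{k}\setminus B_{R}$ and $A_{k}\cap B_{R}$ with $A_{k}=\{u>k\}$, and $R=\dlov(\measd(k))$ chosen so that $\vold(R)=\measd(k)$ exactly as in \eqref{eq:fk_iv}. The plain Sobolev used for $I_{1}$ in Lemma~\ref{l:fk} will be replaced by H\"older combined with Sobolev; the Hardy inequality used for $I_{2}$ will be replaced by a Hardy--Sobolev interpolation together with the new density monotonicity \eqref{eq:fks_nn}. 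Write $E=\int_{A_{k}}\abs{\grad u}^{p}\di\msr$ throughout.

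For $I_{1}$, I would apply H\"older with exponents $p^{*}/s$ and $p^{*}/(p^{*}-s)$ after writing
\begin{equation*}
  \dnf(d)(u-k)^{s}
  =
  \bigl[(u-k)^{p^{*}}\ipf(\vol(d))^{-p^{*}}\bigr]^{s/p^{*}}
  \cdot
  \bigl[\dnf(d)^{p^{*}/(p^{*}-s)}\ipf(\vol(d))^{sp^{*}/(p^{*}-s)}\bigr]^{(p^{*}-s)/p^{*}}.
\end{equation*}
Lemma~\ref{l:sob} controls the first factor by $\gamma E^{s/p}$. In the second, I factor the integrand as $\dnf\cdot[\dnf\,\ipf(\vol)^{p^{*}}]^{s/(p^{*}-s)}$ and apply \eqref{eq:close} to replace $\dnf(d)\ipf(\vol(d))^{p^{*}}$ by $\gamma\dnf(R)\ipf(\vol(R))^{p^{*}}$ on $M\setminus B_{R}$ (with the same adjustment for $R\le s_{0}$ made in the proof of Lemma~\ref{l:fk}); the residual integral $\int_{A_{k}\setminus B_{R}}\dnf\,\di\msr$ is $\le\measd(k)$ by definition.

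For $I_{2}$, the key tool is a Hardy--Sobolev interpolation. Setting $\theta=(p^{*}-s)/(p^{*}-p)\in(0,1)$, H\"older with exponents $1/\theta$ and $1/(1-\theta)$ applied to the pointwise identity
\begin{equation*}
  \frac{(u-k)^{s}}{d^{p\theta}\,\ipf(\vol(d))^{p^{*}(1-\theta)}}
  =
  \Big[\frac{(u-k)^{p}}{d^{p}}\Big]^{\theta}
  \Big[\frac{(u-k)^{p^{*}}}{\ipf(\vol(d))^{p^{*}}}\Big]^{1-\theta},
\end{equation*}
together with Theorem~\ref{t:hardy} and Lemma~\ref{l:sob}, yields $\int(u-k)^{s}/[d^{p\theta}\ipf(\vol(d))^{p^{*}(1-\theta)}]\di\msr\le\gamma E^{s/p}$ (the relation $\theta+p^{*}(1-\theta)/p=s/p$ being immediate). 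On $B_{R}$ I pull out $\ipf(\vol(R))^{p^{*}(1-\theta)}$ by monotonicity of $\ipf\circ\vol$; then, noting that the exponent $\SpDim-s(\SpDim-p)/p$ in \eqref{eq:fks_nn} is exactly $p\theta$, I invoke \eqref{eq:fks_nn} to bound $\dnf(d)d^{p\theta}\le C\dnf(R)R^{p\theta}$ for $d\le R$.

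Finally, a direct computation using $p\theta+p^{*}(1-\theta)=s$, the relation $\ipf(\vol(R))\sim R\vol(R)^{-1/\SpDim}$ from \eqref{eq:aux_vol_j}, and $1/p=1/p^{*}+1/\SpDim$ collapses both $I_{1}$ and $I_{2}$ into the common form $\gamma\dnf(R)R^{s}\vol(R)^{1-s/p}E^{s/p}$; the identity $\dnf(R)\vol(R)=\measd(k)$ at $R=\dlov(\measd(k))$ then rewrites this as the right-hand side of \eqref{eq:fks_n}. The main obstacle is the interior estimate: identifying \eqref{eq:fks_nn} as precisely the monotonicity needed to balance the outer Sobolev-driven bound, and checking that the resulting exponents are admissible exactly in the range $p<s<p^{*}$ of the statement.
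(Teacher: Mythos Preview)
Your proposal is correct and follows essentially the same route as the paper: the same split into $A_{k}\setminus B_{R}$ and $A_{k}\cap B_{R}$, the same H\"older exponents $\theta=(p^{*}-s)/(p^{*}-p)$ and $1-\theta$, the same ingredients (weighted Sobolev, Hardy, \eqref{eq:close}, \eqref{eq:fks_nn}, monotonicity of $\ipf\circ\vol$), and the same choice $R=\dlov(\measd(k))$. The only cosmetic difference is in the $I_{2}$ estimate: you pull the weight $\dnf(d)d^{p\theta}\ipf(\vol(d))^{p^{*}(1-\theta)}$ out pointwise on $B_{R}$ \emph{before} applying H\"older to the remaining quotient, whereas the paper applies H\"older first to split off $\int d^{-p}(u-k)^{p}$ and $\int[\dnf d^{\SpDim-s(\SpDim-p)/p}]^{(p^{*}-p)/(s-p)}(u-k)^{p^{*}}$, and only then inserts the Sobolev weight and bounds via \eqref{eq:fks_nn}; the two orderings are equivalent and lead to the identical expression $\gamma\dnf(R)R^{s}\vol(R)^{1-s/p}E^{s/p}$.
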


\begin{proof}
  As in the proof of Lemma~\ref{l:fk}, we split the integral between
  $B(R)$ and its complement; let $A_{k}=\{u>k\}$ and $\measd(k)$ be defined as in \eqref{eq:measd}.
  \\
  We first have
  from H\"older inequality, Lemma~\ref{l:sob}, and from assumption \eqref{eq:close},
  \begin{equation}
    \label{eq:sin_k}
    \begin{split}
      &\int_{A_{k}\setminus B_{R}}
      \dnf(d(x))
      (u-k)^{s}
      \di\msr
      \le
      \Big(
      \int_{A_{k}\setminus B_{R}}
      \ipf(\vol(d(x)))^{-p^{*}}
      (u-k)^{p^{*}}
      \di\msr
      \Big)^{\frac{s}{p^{*}}}
      \\
      &\quad
      \times
      \Big(
      \int_{A_{k}\setminus B_{R}}
      \dnf(d(x))^{\frac{p^{*}}{p^{*}-s}}
      \ipf(\vol(d(x)))^{\frac{p^{*}s}{p^{*}-s}}
      \di\msr
      \Big)^{1-\frac{s}{p^{*}}}
      \\
      &\quad
      \le
      \gamma
      \dnf(R)^{\frac{s}{p^{*}}}
      \ipf(\vol(R))^{s}
      \measd(k)^{1-\frac{s}{p^{*}}}
      \Big(
      \int_{A_{k}}
      \abs{\grad u}^{p}
      \di\msr
      \Big)^{\frac{s}{p}}
      \\
      &\quad
      \le
      \gamma
      \dnf(R)^{\frac{s}{p^{*}}}
      R^{s}
      \vol(R)^{-\frac{s}{\SpDim}}
      \measd(k)^{1-\frac{s}{p^{*}}}
      \Big(
      \int_{A_{k}}
      \abs{\grad u}^{p}
      \di\msr
      \Big)^{\frac{s}{p}}
      =:
      I_{1}
      \,.
    \end{split}
  \end{equation}
  We have used in last inequality our assumption \eqref{eq:M_isoup}.
  
  Next we apply again H\"older inequality to get
  \begin{multline}
    \label{eq:fks_i}
    \int_{A_{k}\cap B_{R}}
    \dnf(d(x))
    (u-k)^{s}
    \di\msr
    \le
    \Big(
    \int_{A_{k}\cap B_{R}}
    d(x)^{-p}
    (u-k)^{p}
    \di\msr
    \Big)^{\frac{p^{*}-s}{p^{*}-p}}
    \\
    \times
    \Big(
    \int_{A_{k}\cap B_{R}}
    [
    \dnf(d(x))
    d(x)^{\SpDim-(\SpDim-p)\frac{s}{p}}
    ]^{\frac{p^{*}-p}{s-p}}
    (u-k)^{p^{*}}
    \di\msr
    \Big)^{\frac{s-p}{p^{*}-p}}
    \,.
  \end{multline}
  The second factor in \eqref{eq:fks_i} is majorized, owing to our assumption \eqref{eq:fks_nn}, by
  \begin{multline}
    \label{eq:sin_jj}
    \gamma
    \ipf(V(R))^{\frac{\SpDim(s-p)}{p}}
    \dnf(R)
    R^{\SpDim-(\SpDim-p)\frac{s}{p}}
    \Big(
    \int_{A_{k}\cap B_{R}}
    (u-k)^{p^{*}}
    \ipf(\vol(d(x)))^{-p^{*}}
    \di\msr
    \Big)^{\frac{s-p}{p^{*}-p}}
    \\
    \le
    \gamma
    \ipf(V(R))^{\frac{\SpDim(s-p)}{p}}
    \dnf(R)
    R^{\SpDim-(\SpDim-p)\frac{s}{p}}
    \Big(
    \int_{A_{k}}
    \abs{\grad u}^{p}
    \di\msr
    \Big)^{\frac{\SpDim(s-p)}{p^{2}}}
    \,,
  \end{multline}
  when we apply also Lemma~\ref{l:sob}. The first integral in \eqref{eq:fks_i} is bounded with the help of Hardy inequality \eqref{eq:hardy_n}, to obtain finally
  \begin{equation}
    \label{eq:sin_jjj}
    \begin{split}
    &\int_{A_{k}\cap B_{R}}
    \dnf(d(x))
    (u-k)^{s}
    \di\msr
    \\
    &\quad
    \le
    \gamma
    \ipf(V(R))^{\frac{\SpDim(s-p)}{p}}
    \dnf(R)
    R^{\SpDim-(\SpDim-p)\frac{s}{p}}
    \Big(
    \int_{A_{k}}
    \abs{\grad u}^{p}
    \di\msr
    \Big)^{\frac{s}{p}}
    \\
    &\quad
    \le
    \gamma
    R^{\frac{\SpDim(s-p)}{p}}
    \vol(R)^{-\frac{s-p}{p}}
    \dnf(R)
    R^{\SpDim-(\SpDim-p)\frac{s}{p}}
    \Big(
    \int_{A_{k}}
    \abs{\grad u}^{p}
    \di\msr
    \Big)^{\frac{s}{p}}
    =:
    I_{2}
    \,.
    \end{split}
  \end{equation}
  In last inequality we used again \eqref{eq:M_isoup}.
  
  We require that $I_{1}$ and $I_{2}$ contribute essentially the same quantity, that is we define $R$ by means of
  \begin{equation}
    \label{eq:sin_kk}
    \dnf(R)^{\frac{s}{p^{*}}}
    R^{s}
    \vol(R)^{-\frac{s}{\SpDim}}
    \measd(k)^{1-\frac{s}{p^{*}}}
    =
    R^{\frac{\SpDim(s-p)}{p}}
    \vol(R)^{-\frac{s-p}{p}}
    \dnf(R)
    R^{\SpDim-(\SpDim-p)\frac{s}{p}}
    \,.
  \end{equation}
  It is easily seen that this is equivalent to $\vold(R)=\measd(k)$, i.e., $R=\dlov(\measd(k))$. With this choice of $R$ it is trivial to check that, e.g., the right hand side of \eqref{eq:sin_kk} equals $\dnf(R)^{s/p}R^{s}\measd(k)^{1-s/p}$, proving the claim.
\end{proof}

\begin{remark}
  \label{r:fks}
  Concerning assumption \eqref{eq:fks_nn} we note that the exponent of $R$ in it, that is $\SpDim-s(N-p)/p$, tends to $p-$ as $s\to p+$. Hence for suitable $s$ in such a range, \eqref{eq:fks_nn} is a consequence of \eqref{eq:dnf_inc} and of Lemma~\ref{l:aux_elem}.

  Let us also note for later use the following consequence of \eqref{eq:fks_nn}: rewrite the right hand side of \eqref{eq:sin_kk}, that is $\dnf(R)R^{s}\vol(R)^{(p-s)/p}$ as
  \begin{equation*}
    \dnf(R)R^{\SpDim-\frac{s(\SpDim-p)}{p}}
    \Big(
    \frac{R^{N}}{\vol(R)}
    \Big)^{\frac{s-p}{p}}
    \,.
  \end{equation*}
  The last factor above is nondecreasing owing to our assumption \eqref{eq:M_inc}; thus, owing to \eqref{eq:fks_nn} we have
  \begin{equation}
    \label{eq:fks_a}
    \dnf(R)R^{s}\vol(R)^{\frac{p-s}{p}}
    \le
    C
    \dnf(R_{1})R_{1}^{s}\vol(R_{1})^{\frac{p-s}{p}}
    \,,
    \qquad
    0<R<R_{1}
    \,,
  \end{equation}
  for  $C>0$ as in \eqref{eq:fks_nn}.
\end{remark}

In the following we let for the sake of notational simplicity for
$p>r>0$
\begin{equation}
  \label{eq:ES_not}
  E_{r}
  =
  \int_{M}
  \dnf(d(x))
  \abs{u}^{r}
  \di \msr
  \,,
  \qquad
  S
  =
  \frac{E_{r}^{\frac{p}{p-r}}}{E_{p}^{\frac{r}{p-r}}}
  \,.
\end{equation}

\begin{lemma}[Sobolev-Gagliardo-Nirenberg]
  \label{l:sgn}
  Assume that $1<p<\SpDim$, and that the assumptions of Lemma~\ref{l:fk} hold true. Assume further that $0<r<p$. Then we have
  \begin{equation}
    \label{eq:sgn_n}
    E_{p}
    \le
    \gamma
    \dnf(\dlov( S))
    \dlov( S)^{p}
    \int_{M}
    \abs{\grad u}^{p}
    \di \msr
    \,.
  \end{equation}
\end{lemma}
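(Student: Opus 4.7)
The strategy is a standard truncation argument tailored to the weight $\dnf$: split $E_p$ via a cut-off level $k>0$, control the ``small'' part by $E_r$ through Chebychev, and reduce the ``large'' part to a weighted gradient integral by means of the Faber–Krahn estimate of Lemma~\ref{l:fk}. The level $k$ is then chosen so that the $E_r$-contribution is absorbed into $E_p$ in the left hand side.

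Concretely, I would begin by writing, for any $k>0$,
\begin{equation*}
E_{p}
=
\int_{\{\abs{u}\le k\}}
\dnf(d(x))\abs{u}^{p}
\di\msr
+
\int_{\{\abs{u}> k\}}
\dnf(d(x))\abs{u}^{p}
\di\msr
\,.
\end{equation*}
On $\{\abs{u}\le k\}$ one estimates $\abs{u}^{p}\le k^{p-r}\abs{u}^{r}$ so the first term is bounded by $k^{p-r}E_{r}$. On $\{\abs{u}>k\}$ one writes $\abs{u}^{p}\le 2^{p-1}(\abs{u}-k)^{p}+2^{p-1}k^{p}$ and combines the second summand with Chebychev's inequality $\measd(k)\le k^{-r}E_{r}$; this gives $k^{p}\measd(k)\le k^{p-r}E_{r}$. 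Collecting the estimates,
\begin{equation*}
E_{p}
\le
\gamma
k^{p-r}
E_{r}
+
\gamma
\int_{\{u>k\}}
\dnf(d(x))(u-k)^{p}
\di\msr
\,,
\end{equation*}
where for simplicity we treat the case $u\ge 0$.

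Next I would select $k$ by the condition $\gamma k^{p-r}E_{r}=E_{p}/2$, i.e.\ $k^{p-r}=E_{p}/(2\gamma E_{r})$. The $E_{r}$-term is absorbed on the left, yielding $E_{p}\le 2\gamma\int_{\{u>k\}}\dnf(u-k)^{p}\di\msr$. Applying Lemma~\ref{l:fk} to the right hand side,
\begin{equation*}
E_{p}
\le
\gamma
\dnf(\dlov(\measd(k)))
\dlov(\measd(k))^{p}
\int_{M}\abs{\grad u}^{p}\di\msr
\,.
\end{equation*}
With the above choice of $k$, Chebychev gives $\measd(k)\le k^{-r}E_{r}=\gamma' S$ with $S$ as in \eqref{eq:ES_not}.

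The last step is to replace $\measd(k)$ by $S$ in the coefficient. This uses the monotonicity and scaling of $\dnf(R)R^{p}$: by \eqref{eq:dnf_dec} the function $\dnf$ is nonincreasing, while \eqref{eq:dnf_inc} together with $p>\alpha_{2}$ makes $\dnf(R)R^{p}=[\dnf(R)R^{\alpha_{2}}]R^{p-\alpha_{2}}$ nondecreasing. Hence $R\mapsto \dnf(R)R^{p}$ is nondecreasing, and so is $t\mapsto\dnf(\dlov(t))\dlov(t)^{p}$ since $\dlov$ is increasing. The harmless constant $\gamma'$ in $\measd(k)\le\gamma'S$ is then handled by the doubling-type bound \eqref{eq:aux_vold_nnn}, which yields $\dlov(\gamma'S)\le C\dlov(S)$ and, together with monotonicity of $\dnf$, $\dnf(\dlov(\gamma'S))\dlov(\gamma'S)^{p}\le\gamma\dnf(\dlov(S))\dlov(S)^{p}$. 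This gives \eqref{eq:sgn_n}. The only non-routine point is the final coefficient comparison: one must verify that $R\mapsto\dnf(R)R^{p}$ has the required monotonicity and doubling properties, which is where the interplay of \eqref{eq:dnf_dec}, \eqref{eq:dnf_inc}, and \eqref{eq:aux_vold_nnn} plays its role.
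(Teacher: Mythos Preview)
Your argument is correct and follows essentially the same truncation--Faber--Krahn--Chebychev strategy as the paper; the only cosmetic difference is that you fix $k$ so that the $E_r$-term equals $E_p/2$ and absorb it, whereas the paper first applies Lemma~\ref{l:fk} and then balances the two right-hand terms in \eqref{eq:sgn_iii}--\eqref{eq:sgn_iv}. One small caveat: in your final coefficient comparison you invoke \eqref{eq:dnf_dec} to say $\dnf$ is nonincreasing, but \eqref{eq:dnf_dec} is not among the hypotheses of Lemma~\ref{l:sgn} (only the assumptions of Lemma~\ref{l:fk} are); the paper handles the passage from $\gamma S$ to $S$ in \eqref{eq:sgn_vii} citing only \eqref{eq:aux_vold_nnn}, and indeed one can avoid \eqref{eq:dnf_dec} by writing $\dnf(R)R^{p}=\vold(R)\,R^{p}/\vol(R)$ and using \eqref{eq:aux_vold_n}.
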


\begin{proof}
  We begin by splitting, for a $k>0$ to be chosen,
  \begin{equation}
    \label{eq:sgn_j}
    E_{p}
    =
    \int_{\{\abs{u}>k\}}
    \dnf(d(x))
    \abs{u}^{p}
    \di \msr
    +
    \int_{\{\abs{u}\le k\}}
    \dnf(d(x))
    \abs{u}^{p}
    \di \msr
    =:
    J_{1}
    +
    J_{2}
    \,.
  \end{equation}
  We first bound, on using \eqref{eq:fk_n} and Chebychev inequality,
  \begin{equation}
    \label{eq:sgn_i}
    \begin{split}
      J_{1}
      &\le
      2^{p-1}
      \int_{\{\abs{u}>k\}}
      \dnf(d(x))
      (\abs{u}-k)^{p}
      \di \msr
      +
      2^{p-1}
      k^{p}
      \measd(k)
      \\
      &\le
      \gamma
      \dnf(\dlov(\measd(k)))
      \dlov(\measd(k))^{p}
      \int_{M}
      \abs{\grad u}^{p}
      \di \msr
      +
      2^{p-1}
      k^{p-r}
      E_{r}
      \,.
    \end{split}
  \end{equation}
  Then we get obviously
  \begin{equation}
    \label{eq:sgn_ii}
    J_{2}
    \le
    k^{p-r}
    E_{r}
    \,.
  \end{equation}
  Collecting \eqref{eq:sgn_j}--\eqref{eq:sgn_ii} we obtain, after a further use of Chebyshev inequality,
  \begin{equation}
    \label{eq:sgn_iii}
    E_{p}
    \le
    \gamma
    \dnf(\dlov(k^{-r}E_{r}))
    \dlov(k^{-r}E_{r})^{p}
    \int_{M}
    \abs{\grad u}^{p}
    \di \msr
    +
    (2^{p-1}+1)
    k^{p-r}
    E_{r}
    \,.
  \end{equation}
  We select next $k$ so that
  \begin{equation}
    \label{eq:sgn_iv}
    \dnf(\dlov(k^{-r}E_{r}))
    \dlov(k^{-r}E_{r})^{p}
    \int_{M}
    \abs{\grad u}^{p}
    \di \msr
    =
    k^{p-r}
    E_{r}
    \,.
  \end{equation}
  Then we have first
  \begin{equation}
    \label{eq:sgn_v}
    E_{p}
    \le
    \gamma
    k^{p-r}
    E_{r}
    \,,
  \end{equation}
  yielding at once
  \begin{equation}
    \label{eq:sgn_vi}
    E_{r}
    k^{-r}
    \le
    \gamma S
    \,.
  \end{equation}
  On appealing again to \eqref{eq:sgn_iii}, \eqref{eq:sgn_iv}, together with \eqref{eq:sgn_vi} we obtain, recalling \eqref{eq:aux_vold_nnn},
  \begin{multline}
    \label{eq:sgn_vii}
    E_{p}
    \le
    \gamma
    \dnf(\dlov(\gamma S))
    \dlov(\gamma S)^{p}
    \int_{M}
    \abs{\grad u}^{p}
    \di \msr
    \\
    \le
    \gamma
    \dnf(\dlov( S))
    \dlov( S)^{p}
    \int_{M}
    \abs{\grad u}^{p}
    \di \msr
    \,.
  \end{multline}
\end{proof}

\begin{lemma}
  \label{l:sgns}
  Assume that $1<p<s<\SpDim$, and that the assumptions of Lemma~\ref{l:fks} hold true. Assume further that $0<r<p$.
  Then we have
  \begin{equation}
    \label{eq:sgns_n}
    E_{s}
    \le
    \gamma
    [
    \dnf(\dlov( \varSigma))
    \dlov( \varSigma)^{p}
    ]^{\frac{s}{p}}
    \varSigma^{1-\frac{s}{p}}
    \Big(
    \int_{M}
    \abs{\grad u}^{p}
    \di \msr
    \Big)^{\frac{s}{p}}
    \,,
  \end{equation}
  where $\varSigma=E_{r}^{s/(s-r)}E_{s}^{-r/(s-r)}$.
\end{lemma}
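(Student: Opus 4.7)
The plan is to mimic the proof of Lemma~\ref{l:sgn} almost line by line, with Lemma~\ref{l:fks} taking the role that Lemma~\ref{l:fk} played there. For a threshold $k>0$ to be chosen, I split
\[
  E_s
  =
  \int_{\{\abs{u}>k\}}\dnf(d(x))\abs{u}^s\di\msr
  +
  \int_{\{\abs{u}\le k\}}\dnf(d(x))\abs{u}^s\di\msr
  =:J_1+J_2.
\]

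For $J_1$, I would use the elementary inequality $\abs{u}^s \le 2^{s-1}(\abs{u}-k)^s+2^{s-1}k^s$ on $\{\abs{u}>k\}$, apply Lemma~\ref{l:fks} to the first piece, and control the second piece through Chebyshev's inequality $\measd(k)\le k^{-r}E_r$. For $J_2$, the trivial pointwise bound $\abs{u}^s\le k^{s-r}\abs{u}^r$ gives $J_2\le k^{s-r}E_r$. Combining these pieces yields
\begin{multline*}
  E_s
  \le
  \gamma
  \bigl[\dnf(\dlov(k^{-r}E_r))\dlov(k^{-r}E_r)^p\bigr]^{s/p}
  (k^{-r}E_r)^{1-s/p}
  \Bigl(\int_{M}\abs{\grad u}^p\di\msr\Bigr)^{s/p}
  \\
  +\gamma k^{s-r}E_r.
\end{multline*}

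I would then choose $k$ by equating the two terms on the right, in complete analogy with \eqref{eq:sgn_iv}. This immediately produces $E_s\le \gamma k^{s-r}E_r$, which after elementary manipulation forces $k^{-r}E_r\le \gamma \varSigma$. Substituting back into the Faber-Krahn contribution produces \eqref{eq:sgns_n} with $\dlov(\gamma\varSigma)$ in place of $\dlov(\varSigma)$; to clean this up I would invoke \eqref{eq:aux_vold_nnn} to obtain $\dlov(\gamma\varSigma)\le \gamma'\dlov(\varSigma)$, together with the fact that $\dnf$ is nonincreasing, so enlarging its argument can only decrease its value. Hence $\dnf(\dlov(\gamma\varSigma))\dlov(\gamma\varSigma)^p$ is bounded by a constant multiple of $\dnf(\dlov(\varSigma))\dlov(\varSigma)^p$, which gives the claim.

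The only step requiring attention is the existence of the balancing value $k>0$: the left side of the equation that defines $k$ is nonincreasing in $k$ (because $\dlov$ is increasing) while the right side is increasing in $k$, so continuity and an inspection of the two limiting regimes produce a solution; if necessary, one regularizes $u$ beforehand, as in the proof of Lemma~\ref{l:emb_old}, to ensure that the relevant quantities take all required values. All remaining steps are routine algebra parallel to those in Lemma~\ref{l:sgn}, so I do not expect any genuine obstacle beyond careful bookkeeping of the exponents $1-s/p$ and $s/p$ inherited from Lemma~\ref{l:fks}.
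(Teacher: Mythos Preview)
Your overall strategy is the paper's, but there is a real gap hiding in the bookkeeping you dismiss as routine. The key difference from Lemma~\ref{l:sgn} is that here $s>p$, so the exponent $1-\tfrac{s}{p}$ in the Faber--Krahn coefficient
\[
  v\longmapsto
  \bigl[\dnf(\dlov(v))\,\dlov(v)^{p}\bigr]^{s/p}\,v^{\,1-s/p}
\]
is \emph{negative}. Consequently, when you replace $\measd(k)$ by the larger quantity $k^{-r}E_{r}$ via Chebyshev, the factor $v^{1-s/p}$ decreases, and it is not at all obvious that the whole product increases; the same issue recurs when you substitute $k^{-r}E_{r}\le\gamma\varSigma$ at the end. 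Your cleanup argument treats only the factor $\dnf(\dlov(\cdot))\dlov(\cdot)^{p}$ and simply ignores $v^{1-s/p}$, so as written the displayed combined estimate and the final passage to $\varSigma$ are unjustified. (Your monotonicity claim for the existence of the balancing $k$ has the same defect: ``because $\dlov$ is increasing'' does not suffice once the negative power of $v$ is present.)

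The fix is to treat the product as a whole. Writing $R=\dlov(v)$, so $v=\vold(R)=\dnf(R)\vol(R)$, one computes
\[
  \bigl[\dnf(R)R^{p}\bigr]^{s/p}\,\bigl[\dnf(R)\vol(R)\bigr]^{1-s/p}
  =
  \dnf(R)\,R^{s}\,\vol(R)^{(p-s)/p},
\]
and this is nondecreasing in $R$ by \eqref{eq:fks_a} (a consequence of the extra hypothesis \eqref{eq:fks_nn} of Lemma~\ref{l:fks} together with \eqref{eq:M_inc}; see Remark~\ref{r:fks}). This is precisely what the paper invokes, both to obtain \eqref{eq:sgns_iii} after Chebyshev and again in the last substitution. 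With that monotonicity in hand, the rest of your outline goes through exactly as you wrote it.
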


\begin{proof}
  As in the proof of Lemma~\ref{l:sgn}, we begin by splitting for a $k>0$ to be chosen,
    \begin{equation}
    \label{eq:sgns_j}
    E_{s}
    =
    \int_{\{\abs{u}>k\}}
    \dnf(d(x))
    \abs{u}^{s}
    \di \msr
    +
    \int_{\{\abs{u}\le k\}}
    \dnf(d(x))
    \abs{u}^{s}
    \di \msr
    =:
    J_{1}
    +
    J_{2}
    \,.
  \end{equation}
  Then we bound $J_{1}$ essentially as we did in \eqref{eq:sgn_i}, but applying \eqref{eq:fks_n} rather than \eqref{eq:fk_n}, and $J_{2}$ following the direct approach of \eqref{eq:sgn_ii}. We get, collecting such estimates, after a further use of Chebyshev inequality,
  \begin{multline}
    \label{eq:sgns_iii}
    E_{s}
    \le
    \gamma
    [
    \dnf(\dlov(k^{-r}E_{r}))
    \dlov(k^{-r}E_{r})^{p}
    ]^{\frac{s}{p}}
    (k^{-r}E_{r})^{1-\frac{s}{p}}
    \Big(
    \int_{M}
    \abs{\grad u}^{p}
    \di \msr
    \Big)^{\frac{s}{p}}
    \\
    +
    (2^{s-1}+1)
    k^{s-r}
    E_{r}
    \,.
  \end{multline}
  We have made use of \eqref{eq:fks_a} here.
  \\
  Next we choose $k$ so that
  \begin{equation}
    \label{eq:sgns_iv}
    [
    \dnf(\dlov(k^{-r}E_{r}))
    \dlov(k^{-r}E_{r})^{p}
    ]^{\frac{s}{p}}
    (k^{-r}E_{r})^{1-\frac{s}{p}}
    \Big(
    \int_{M}
    \abs{\grad u}^{p}
    \di \msr
    \Big)^{\frac{s}{p}}
    =
    k^{s-r}
    E_{r}
    \,,
  \end{equation}
  implying first that
  \begin{equation}
    \label{eq:sgns_v}
    E_{s}
    \le
    \gamma
    k^{s-r}
    E_{r}
    \,,
  \end{equation}
  so that
  \begin{equation}
    \label{eq:sgns_vi}
    E_{r}
    k^{-r}
    \le
    \gamma \varSigma
    \,.
  \end{equation}
  The proof then is concluded by substituting \eqref{eq:sgns_vi} in \eqref{eq:sgns_iii}, on recalling \eqref{eq:sgns_iv}, and \eqref{eq:fks_a}, too.
\end{proof}

\begin{corollary}
  \label{co:sgn_W}
  Under the assumptions and with the notation of Lemma~\ref{l:sgn} we have for the function $W$ defined in \eqref{eq:aux_function_n}
  \begin{equation}
    \label{eq:sgn_W_n}
    E_{p}
    \le
    \gamma
    E_{r}^{\frac{p(p-\alpha_{2})}{\Ha(r)}}
    W( S)^{\frac{(p-r)(\SpDim-\alpha_{1})}{\Ha(r)}}
    \Big(
    \int_{M}
    \abs{\grad u}^{p}
    \di \msr
    \Big)^{\frac{(p-r)(\SpDim-\alpha_{1})}{\Ha(r)}}
    \,,
  \end{equation}
  where $\Ha(r)=(p-r)(\SpDim-\alpha_{1})+r(p-\alpha_{2})$ for $r>0$.
\end{corollary}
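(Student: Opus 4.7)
The plan is to start from the Sobolev--Gagliardo--Nirenberg estimate \eqref{eq:sgn_n} of Lemma~\ref{l:sgn} and simply ``rotate'' the factor $\dnf(\dlov(S))\dlov(S)^{p}$ into the form $W(S)$, moving the resulting power of $S$ to the left to be absorbed into $E_{p}$. So first I would apply \eqref{eq:sgn_n} to get
\begin{equation*}
  E_{p}
  \le
  \gamma
  \dnf(\dlov(S))\dlov(S)^{p}
  \int_{M}\abs{\grad u}^{p}\di\msr\,.
\end{equation*}
Setting $\theta=(p-\alpha_{2})/(\SpDim-\alpha_{1})$, the definition \eqref{eq:aux_function_n} gives
\begin{equation*}
  \dnf(\dlov(S))\dlov(S)^{p}
  =
  W(S)\,S^{\theta}\,.
\end{equation*}

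Next I substitute the definition $S=E_{r}^{p/(p-r)}\,E_{p}^{-r/(p-r)}$, which yields
\begin{equation*}
  E_{p}^{1+\frac{r\theta}{p-r}}
  \le
  \gamma\,W(S)\,
  E_{r}^{\frac{p\theta}{p-r}}
  \int_{M}\abs{\grad u}^{p}\di\msr\,.
\end{equation*}
The exponent on the left rewrites as
\begin{equation*}
  1+\frac{r\theta}{p-r}
  =
  \frac{(p-r)(\SpDim-\alpha_{1})+r(p-\alpha_{2})}{(p-r)(\SpDim-\alpha_{1})}
  =
  \frac{\Ha(r)}{(p-r)(\SpDim-\alpha_{1})}\,,
\end{equation*}
and raising both sides to the reciprocal gives the announced estimate, once one checks that
\begin{equation*}
  \frac{p\theta}{p-r}\cdot\frac{(p-r)(\SpDim-\alpha_{1})}{\Ha(r)}
  =
  \frac{p(p-\alpha_{2})}{\Ha(r)}\,.
\end{equation*}

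There is essentially no obstacle here: the statement is purely algebraic manipulation of the exponents starting from \eqref{eq:sgn_n}, the only thing to watch is keeping straight the three competing exponents $p$, $r$, and $\theta$, and using the monotonicity of $W$ (via Lemma~\ref{l:aux_function}) only insofar as it has already been exploited implicitly in the statement of Lemma~\ref{l:sgn}. Since $\Ha(r)>0$ for $0<r<p$, all exponents are well defined and positive, so the resulting inequality is meaningful.
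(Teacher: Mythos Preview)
Your proof is correct and follows exactly the approach indicated in the paper: multiply and divide the right-hand side of \eqref{eq:sgn_n} by $S^{(p-\alpha_{2})/(\SpDim-\alpha_{1})}$, recognise $W(S)$, then substitute the definition of $S$ and rearrange the powers of $E_{p}$. The only superfluous remark is your mention of the monotonicity of $W$ from Lemma~\ref{l:aux_function}; no monotonicity is used here, the argument is purely algebraic.
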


\begin{proof}
  We only need to multiply and divide the right hand side of \eqref{eq:sgn_n} by $S^{(p-\alpha_{2})/(\SpDim-\alpha_{1})}$; then \eqref{eq:sgn_W_n} follows after some elementary algebra, when we recall the definitions of $S$ and of $W$.
\end{proof}

Again by simple algebra we infer also the following result.

\begin{corollary}
  \label{co:sgns_W}
  Under the assumptions and with the notation of Lemma~\ref{l:sgns} we have for the function $W$ defined in \eqref{eq:aux_function_n}
  \begin{equation}
    \label{eq:sgns_W_n}
    E_{s}
    \le
    \gamma
    E_{r}^{\frac{\Ha(s)}{\Ha(r)}}
    W( \varSigma)^{\frac{(s-r)(\SpDim-\alpha_{1})}{\Ha(r)}}
    \Big(
    \int_{M}
    \abs{\grad u}^{p}
    \di \msr
    \Big)^{\frac{(s-r)(\SpDim-\alpha_{1})}{\Ha(r)}}
    \,,
  \end{equation}
  where $\Ha$ is as in Corollary~\ref{co:sgn_W}.
\end{corollary}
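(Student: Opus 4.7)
My plan is to mimic the strategy used in Corollary~\ref{co:sgn_W}, which consisted in multiplying and dividing by a suitable power of the ratio $S$ so as to replace the combination $\dnf(\dlov(\cdot))\dlov(\cdot)^{p}$ by the function $W$ from Lemma~\ref{l:aux_function}. The only new ingredient is that the starting inequality from Lemma~\ref{l:sgns} contains an extra factor of $\varSigma^{1-s/p}$, which has to be reconciled with the power of $\varSigma$ introduced by the substitution.

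Concretely, I would first apply Lemma~\ref{l:sgns} in the form
\begin{equation*}
  E_{s}
  \le
  \gamma
  [\dnf(\dlov(\varSigma))\dlov(\varSigma)^{p}]^{\frac{s}{p}}
  \varSigma^{1-\frac{s}{p}}
  \Big(
  \int_{M}
  \abs{\grad u}^{p}
  \di\msr
  \Big)^{\frac{s}{p}}
  \,,
\end{equation*}
and then use the very definition $W(\varSigma)=\dnf(\dlov(\varSigma))\dlov(\varSigma)^{p}\varSigma^{-(p-\alpha_{2})/(\SpDim-\alpha_{1})}$ to rewrite the bracketed term as $W(\varSigma)\varSigma^{(p-\alpha_{2})/(\SpDim-\alpha_{1})}$. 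Combining this power of $\varSigma$ with the pre-existing factor $\varSigma^{1-s/p}$ gives a single factor $\varSigma^{\Ha(s)/(p(\SpDim-\alpha_{1}))}$, after observing the elementary identity
\begin{equation*}
  \frac{s(p-\alpha_{2})}{p(\SpDim-\alpha_{1})}
  +
  1
  -
  \frac{s}{p}
  =
  \frac{s(p-\alpha_{2})+(p-s)(\SpDim-\alpha_{1})}{p(\SpDim-\alpha_{1})}
  =
  \frac{\Ha(s)}{p(\SpDim-\alpha_{1})}
  \,.
\end{equation*}

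Next I would substitute the defining formula $\varSigma=E_{r}^{s/(s-r)}E_{s}^{-r/(s-r)}$, which splits this $\varSigma$-power into a product of powers of $E_{r}$ and $E_{s}$. Moving the resulting $E_{s}$ factor to the left of the inequality, the exponent of $E_{s}$ there becomes
\begin{equation*}
  1+\frac{r\Ha(s)}{(s-r)p(\SpDim-\alpha_{1})}
  =
  \frac{(s-r)p(\SpDim-\alpha_{1})+r\Ha(s)}{(s-r)p(\SpDim-\alpha_{1})}
  \,.
\end{equation*}
The key algebraic step — which I view as the only nontrivial calculation in the whole proof — is to check that the numerator here collapses to $s\Ha(r)$; this is a direct expansion using $\Ha(s)=(p-s)(\SpDim-\alpha_{1})+s(p-\alpha_{2})$ and $\Ha(r)=(p-r)(\SpDim-\alpha_{1})+r(p-\alpha_{2})$, after which the $rp(\SpDim-\alpha_{1})$ terms cancel and a common factor of $s$ appears.

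Once that identity is in hand, raising both sides of the inequality to the reciprocal power $(s-r)p(\SpDim-\alpha_{1})/(s\Ha(r))$ redistributes the exponents and yields exactly
\begin{equation*}
  E_{s}
  \le
  \gamma
  E_{r}^{\Ha(s)/\Ha(r)}
  W(\varSigma)^{(s-r)(\SpDim-\alpha_{1})/\Ha(r)}
  \Big(\int_{M}\abs{\grad u}^{p}\di\msr\Big)^{(s-r)(\SpDim-\alpha_{1})/\Ha(r)}
  \,,
\end{equation*}
which is the claim. I do not anticipate any real obstacle: the argument is purely algebraic, and there is no need to appeal to any of the deeper embedding, Hardy, or Faber--Krahn inequalities beyond what is already packaged inside Lemma~\ref{l:sgns}; monotonicity properties of $W$ (Lemma~\ref{l:aux_function}) are likewise not required here, since $\varSigma$ is not being perturbed by a constant factor as in the proof of Lemma~\ref{l:sgn}.
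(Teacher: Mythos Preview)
Your proposal is correct and follows exactly the approach indicated in the paper, which simply states that the result follows ``by simple algebra'' from Lemma~\ref{l:sgns} in the same way Corollary~\ref{co:sgn_W} followed from Lemma~\ref{l:sgn}. Your explicit verification of the key identity $(s-r)p(\SpDim-\alpha_{1})+r\Ha(s)=s\Ha(r)$ is precisely the computation the paper leaves to the reader.
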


We'll use the following inequalities. The set $C^{1}_{0}(M)$ denotes the $C^{1}(M)$ functions with compact support.

\begin{lemma}
  \label{l:cacc}
  Let $\unk$ be a solution of \eqref{eq:pde}--\eqref{eq:initd}, and
  let $\theta>0$, with $\theta> 2-m$ if $m<1$, $k>h>0$, $0<\tau_{2}<\tau_{1}$ be given. Then
  \begin{multline}
    \label{eq:cacc_n}
    \sup_{\tau_{1}<\tau<t}
    \int_{M}
    \ppos{\unk-k}^{1+\theta}
    \dnf
    \di\msr
    +
    \int_{\tau_{1}}^{t}
    \int_{M}
    \abs{\grad \ppos{\unk-k}^{\frac{p+m+\theta-2}{p}}}^{p}
    \di\msr
    \di\tau
    \\
    \le
    \gamma
    \frac{H(h,k)}{\tau_{1}-\tau_{2}}
    \int_{\tau_{2}}^{t}
    \int_{M}
    \ppos{\unk-h}^{1+\theta}
    \dnf
    \di\msr
    \di\tau
    \,,
  \end{multline}
  provided the right hand side in \eqref{eq:cacc_n} is finite. Here $H(h,k)=(k/(k-h))^{\pneg{m-1}}$.
\end{lemma}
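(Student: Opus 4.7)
The argument is the classical Caccioppoli/energy estimate: I plan to multiply the weak formulation of \eqref{eq:pde} by the test function $\zeta(x,\tau)=\eta(\tau)^{p}(u(x,\tau)-k)_{+}^{\theta}$, where $\eta\in C^{1}([0,t])$ is a time cutoff with $\eta\equiv 0$ on $[0,\tau_{2}]$, $\eta\equiv 1$ on $[\tau_{1},t]$ and $0\le\eta'\le 2/(\tau_{1}-\tau_{2})$. Because $u$ has only the regularity described after \eqref{eq:weaksol}, the manipulation is actually carried out on a Steklov average of $u$ in time, with a passage to the limit at the end; these are standard and are not part of the real content of the lemma.

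For the time term, the chain-rule identity $u_{\tau}(u-k)_{+}^{\theta}=(1+\theta)^{-1}\partial_{\tau}(u-k)_{+}^{1+\theta}$ followed by integration by parts in $\tau$ produces a boundary contribution $(1+\theta)^{-1}\eta(t)^{p}\int_{M}\rho(u-k)_{+}^{1+\theta}(t)\,d\mu$, and a cutoff residual controlled by $\gamma(\tau_{1}-\tau_{2})^{-1}\int_{\tau_{2}}^{t}\int_{M}\rho(u-k)_{+}^{1+\theta}\,d\mu\,d\tau$. Varying $\eta$ over the admissible family on $[\tau_{1},t]$ recovers the supremum appearing as the first term on the LHS of \eqref{eq:cacc_n}, while the residual, since $(u-k)_{+}\le (u-h)_{+}$ on $\{u>k\}$, is absorbed in the RHS of \eqref{eq:cacc_n}.

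The spatial term equals $\theta\int_{0}^{t}\eta^{p}\int_{\{u>k\}}u^{m-1}(u-k)_{+}^{\theta-1}|\nabla u|^{p}\,d\mu\,d\tau$, and must be compared with a constant multiple of $\int\int \eta^{p}(u-k)_{+}^{m+\theta-2}|\nabla u|^{p}\,d\mu\,d\tau$, which is $|\nabla(u-k)_{+}^{(p+m+\theta-2)/p}|^{p}$ up to the factor $((p+m+\theta-2)/p)^{p}$ from the chain rule. For $m\ge 1$ the pointwise bound $u^{m-1}\ge(u-k)_{+}^{m-1}$ on $\{u>k\}$ is immediate and yields the conclusion with $H(h,k)=1$. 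For $0<m<1$ I plan to exploit the elementary bound
\[
\frac{u}{u-h}=1+\frac{h}{u-h}\le 1+\frac{h}{k-h}=\frac{k}{k-h}\qquad\text{on }\{u>k\},
\]
which, combined with $m-1<0$, gives $u^{m-1}\ge H(h,k)^{-1}(u-h)^{m-1}$ on $\{u>k\}$. Splitting $\{u>k\}$ according to whether $u-k\ge k-h$ (where $u-h\le 2(u-k)$, so one can trade $(u-h)^{m-1}$ for $(u-k)_{+}^{m-1}$ up to a constant) or $0<u-k<k-h$ (where the assumption $\theta>2-m$ guarantees that $(u-k)_{+}^{m+\theta-2}\le(k-h)^{m+\theta-2}$ is bounded and the contribution can be reabsorbed), one reproduces the gradient term on the LHS of \eqref{eq:cacc_n} with loss exactly $H(h,k)$.

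The main obstacle is precisely the singular case $0<m<1$: the pointwise lower bound $u^{m-1}\ge c(u-k)_{+}^{m-1}$ that is trivial in the degenerate case fails when $u$ approaches $k^{+}$, and it is the additional information carried by the intermediate level $h$ — quantified by $H(h,k)=(k/(k-h))^{1-m}$ — that restores the estimate. The technical assumption $\theta>2-m$ in the singular case is indispensable both to make $(u-k)_{+}^{m+\theta-2}$ non-singular at the truncation level and to close the dyadic split just described; no such restriction is needed in the degenerate case, consistently with the statement of the lemma.
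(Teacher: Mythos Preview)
The paper does not prove this lemma; the authors write ``The proofs of lemmas~\ref{l:cacc} and~\ref{l:cacc2} are standard and we omit them.'' Your choice of test function $\eta(\tau)^{p}(u-k)_{+}^{\theta}$ with a pure time cutoff is exactly the standard device they have in mind, and for $m\ge 1$ your argument is complete as written.

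In the singular range $0<m<1$ the dyadic split is the right idea and the region $\{u-k\ge k-h\}$ is handled correctly, producing the loss $H(h,k)$. Your treatment of the complementary strip $\{0<u-k<k-h\}$ is not quite closed, however: the phrase ``the contribution can be reabsorbed'' does not say \emph{into what}. Since the cutoff is purely temporal, the right-hand side of the energy identity at level $k$ contains no gradient term, so the quantity $(k-h)^{m+\theta-2}\int_{\{k<u<2k-h\}}|\nabla u|^{p}$ cannot be absorbed there directly, nor can it be controlled by the good term $\int u^{m-1}(u-k)_{+}^{\theta-1}|\nabla u|^{p}$ at level $k$, because the ratio $(u-k)^{m-1}/u^{m-1}=(u/(u-k))^{1-m}$ blows up as $u\to k^{+}$. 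The standard remedy is to run the same energy identity a second time at the lower level $h$: on $\{k<u<2k-h\}$ one has $u^{m-1}\ge(2k-h)^{m-1}$ and $(u-h)^{\theta-1}$ comparable to $(k-h)^{\theta-1}$, so the good term $\int u^{m-1}(u-h)_{+}^{\theta-1}|\nabla u|^{p}$ from that identity dominates $\int_{\{k<u<2k-h\}}|\nabla u|^{p}$ up to the factor $((2k-h)/(k-h))^{1-m}\le 2^{1-m}H(h,k)$, and is itself bounded by $\gamma(\tau_{1}-\tau_{2})^{-1}\iint\rho(u-h)_{+}^{1+\theta}$. With this step spelled out, your argument is correct and is precisely what the authors mean by ``standard''.
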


\begin{lemma}
  \label{l:cacc2}
  Let $\unk$ be a solution of \eqref{eq:pde}--\eqref{eq:initd}, and
  let $\theta\ge p-1$. Let
  $\zeta\in C^{1}_{0}(M)$, $0\le \zeta\le 1$. Then
  \begin{multline}
    \label{eq:cacc2_n}
    \sup_{0<\tau<t}
    \int_{M}
    (\unk\zeta)^{1+\theta}
    \dnf
    \di\msr
    +
    \int_{0}^{t}
    \int_{M}
    \abs{\grad (\unk\zeta)^{\frac{p+m+\theta-2}{p}}}^{p}
    \di\msr
    \di\tau
    \\
    \le
    \gamma
    \Big\{
    \int_{0}^{t}
    \int_{M}
    \abs{\grad \zeta}^{p}
    \unk^{p+m+\theta-2}
    \di\msr
    \di\tau
    +
    \int_{M}
    (\unk_{0}\zeta)^{1+\theta}
    \di\msr
    \Big\}
    \,,
  \end{multline}
  provided the right hand side in \eqref{eq:cacc2_n} is finite.
\end{lemma}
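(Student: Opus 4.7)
The plan is to test the equation \eqref{eq:pde} against $\unk^{\theta}\zeta^{1+\theta}$ (after the usual Steklov regularization in time, in the spirit of the convention announced after \eqref{eq:weaksol}, together with a truncation $\unk^{\theta}\mapsto(\unk+\delta)^{\theta}$ with $\delta\downarrow 0$ if $\theta<1$). Integrating over $M\times(0,\tau)$ for $\tau\in(0,t]$, the temporal part yields $\frac{1}{1+\theta}\int_{M}(\unk\zeta)^{1+\theta}\dnf\di\msr\big|_{0}^{\tau}$, since $\zeta$ does not depend on time, while the elliptic part, after integration by parts and using that $\zeta$ has compact support, splits into the good dissipation
\begin{equation*}
  \theta\int_{0}^{\tau}\int_{M}\unk^{m+\theta-2}\abs{\grad\unk}^{p}\zeta^{1+\theta}\di\msr\di s
\end{equation*}
plus the cross term
\begin{equation*}
  (1+\theta)\int_{0}^{\tau}\int_{M}\unk^{m+\theta-1}\abs{\grad\unk}^{p-2}\grad\unk\cdot\grad\zeta\,\zeta^{\theta}\di\msr\di s.
\end{equation*}

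The cross term is controlled by Young's inequality with exponents $p/(p-1)$ and $p$, applied to the factorization
\begin{equation*}
  \bigl[\unk^{(m+\theta-2)(p-1)/p}\abs{\grad\unk}^{p-1}\zeta^{(1+\theta)(p-1)/p}\bigr]\cdot \bigl[\unk^{(p+m+\theta-2)/p}\zeta^{(1+\theta-p)/p}\abs{\grad\zeta}\bigr].
\end{equation*}
A direct bookkeeping of the exponents shows that the $p/(p-1)$-power of the first factor reproduces the good dissipation $\unk^{m+\theta-2}\abs{\grad\unk}^{p}\zeta^{1+\theta}$, while the $p$-power of the second is $\unk^{p+m+\theta-2}\zeta^{1+\theta-p}\abs{\grad\zeta}^{p}$. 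The assumption $\theta\ge p-1$ is used precisely here: it ensures $1+\theta-p\ge 0$ so that $\zeta^{1+\theta-p}\le 1$, and no spurious negative power of $\zeta$ is created. Choosing $\epsilon$ small, absorbing $\epsilon$ times the good term into the dissipation on the left, and then taking the supremum over $\tau\in(0,t)$, we arrive at
\begin{equation*}
  \sup_{0<\tau<t}\int_{M}(\unk\zeta)^{1+\theta}\dnf\di\msr+\int_{0}^{t}\int_{M}\unk^{m+\theta-2}\abs{\grad\unk}^{p}\zeta^{1+\theta}\di\msr\di s\le\gamma\{\text{RHS of }\eqref{eq:cacc2_n}\}.
\end{equation*}

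It remains to convert the integrated dissipation into the quantity $\abs{\grad(\unk\zeta)^{(p+m+\theta-2)/p}}^{p}$ appearing in the statement. Setting $\alpha=(p+m+\theta-2)/p$, the chain rule gives
\begin{equation*}
  \abs{\grad(\unk\zeta)^{\alpha}}^{p}\le C\bigl[\unk^{m+\theta-2}\zeta^{p+m+\theta-2}\abs{\grad\unk}^{p}+\unk^{p+m+\theta-2}\zeta^{m+\theta-2}\abs{\grad\zeta}^{p}\bigr].
\end{equation*}
In the degenerate regime \eqref{eq:pde_deg} the inequalities $p+m+\theta-2\ge 1+\theta$ and $m+\theta-2\ge 0$ (both using $\theta\ge p-1$ and $p+m-3>0$) combine with $\zeta\le 1$ to give $\zeta^{p+m+\theta-2}\le\zeta^{1+\theta}$ and $\zeta^{m+\theta-2}\le 1$; the first summand is then dominated by the dissipation already estimated and the second by the right-hand side of \eqref{eq:cacc2_n}.

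The main technical obstacle is not the algebra above but the rigorous justification of the test-function step, since $\unk^{\theta}$ may be singular at $\unk=0$ when $\theta<1$ and the weak formulation \eqref{eq:weaksol} admits only $C^{1}$ test functions; one handles this by Steklov averages in time together with the regularization $(\unk+\delta)^{\theta}$, passing to the limit by monotone and dominated convergence, relying on the local integrability properties $\unk^{m-1}\abs{\grad\unk}^{p}\in L^{1}_{\mathrm{loc}}$ recorded after \eqref{eq:weaksol}. These arguments are by now standard and are the reason for the simplifying convention of the Introduction.
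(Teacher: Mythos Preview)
The paper omits the proof of this lemma entirely, stating only that it is standard; your argument is precisely the standard Caccioppoli calculation (test with $u^{\theta}\zeta^{1+\theta}$, split the cross term via Young, then rewrite the gradient) and is correct. One minor remark: your conversion step is carried out only in the degenerate regime \eqref{eq:pde_deg}, where indeed $m+\theta-2\ge p+m-3>0$ and $p+m+\theta-2\ge 1+\theta$; this suffices, since the lemma is invoked in the paper only in Section~\ref{s:fsp}, under the standing hypothesis \eqref{eq:pde_deg}.
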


The proofs of lemmas \ref{l:cacc} and \ref{l:cacc2} are standard and we omit them.

\section{Proof of the sup estimate in the case $p+m-3>0$.}
\label{s:sup}

For $k>0$ to be selected later, and for $\theta>0$ as in Lemma~\ref{l:cacc}, we define for $n\ge 0$ 
\begin{gather*}
  k_{n}
  =
  k
  (
  1
  -
  2\sigma
  +
  2^{-n}
  \sigma
  )
  \,,
  \quad
  r
  =
  \frac{p}{p+m+\theta-2}
  \,,
  \\
  s
  =
  r(1+\theta)
  <
  p
  \,,
  \quad
  v_{n}
  =
  \ppos{u-k_{n}}^{\frac{1}{r}}
  \,,
  \quad
  \tau_{n}
  =
  \frac{t}{2}
  (1-2\sigma+2^{-n}\sigma)
  \,.
\end{gather*}
Here $\sigma\in(1,1/4)$ is fixed. From Lemma~\ref{l:cacc} we infer
\begin{equation}
  \label{eq:sup_i}
  \sup_{\tau_{n}<\tau<t}
  \int_{M}
  \dnf
  v_{n}^{s}
  \di\msr
  +
  \int_{\tau_{n}}^{t}
  \int_{M}
  \abs{\grad v_{n}}^{p}
  \di\msr
  \di \tau
  \le
  \gamma
  \frac{2^{n\ell}}{\sigma^{\ell}t}
  \int_{\tau_{n+1}}^{t}
  \int_{M}
  \dnf
  v_{n+1}^{s}
  \di \msr
  \di \tau
  \,,
\end{equation}
for $\ell=1+\pneg{m-1}$. From H\"older inequality it follows
\begin{equation}
  \label{eq:sup_ii}
  \int_{M}
  \dnf
  v_{n+1}^{s}
  \di\msr
  \le
  \Big(
  \int_{M}
  \dnf
  v_{n+1}^{p}
  \di\msr
  \Big)^{\frac{s-r}{p-r}}
  \Big(
  \int_{M}
  \dnf
  v_{n+1}^{r}
  \di\msr
  \Big)^{\frac{p-s}{p-r}}
  \,,
\end{equation}
whence, on applying Corollary~\ref{co:sgn_W} to bound the first integral on the right hand side of \eqref{eq:sup_ii}, we get
\begin{multline}
  \label{eq:sup_iii}
  \int_{M}
  \dnf
  v_{n+1}^{s}
  \di\msr
  \le
  \gamma
  W(S)^{\frac{(s-r)(\SpDim-\alpha_{1})}{\Ha(r)}}
  \Big(
  \int_{M}
  \dnf
  v_{n+1}^{r}
  \di\msr
  \Big)^{\frac{p-s}{p-r}+\frac{(s-r)p(p-\alpha_{2})}{(p-r)\Ha(r)}}
  \\
  \times
  \Big(
  \int_{M}
  \abs{\grad v_{n+1}}^{p}
  \di\msr
  \Big)^{\frac{(s-r)(\SpDim-\alpha_{1})}{\Ha(r)}}
  \,.
\end{multline}
here $S$ is defined as in \eqref{eq:ES_not}. Note that by appealing again to H\"older inequality we obtain, with the notation introduced in \eqref{eq:measd}
\begin{equation}
  \label{eq:sup_iv}
  S
  \le
  \int_{\{v_{n+1}>0\}}
  \dnf
  \di\msr
  \le
  \measd(k_{\infty})
  =
  \msd(\{u(\tau)>k_{\infty}\})
  \,,
  \quad
  k_{\infty}
  =
  k(1-2\sigma)
  \,.
\end{equation}
Next we integrate in time the estimate \eqref{eq:sup_iii}, and after a further application of H\"older inequality we bound the right hand side of \eqref{eq:sup_i} by
\begin{equation}
  \label{eq:sup_v}
  \begin{split}
    &\gamma
    \frac{2^{n\ell}}{\sigma^{\ell}t}
    \int_{\tau_{n+1}}^{t}
    \Big(
    \int_{M}
    \dnf
    v_{n+1}^{r}
    \di\msr
    \Big)^{\frac{p-s}{p-r}+\frac{(s-r)p(p-\alpha_{2})}{(p-r)\Ha(r)}}
    \Big(
    \int_{M}
    \abs{\grad v_{n+1}}^{p}
    \di\msr
    \Big)^{\frac{(s-r)(\SpDim-\alpha_{1})}{\Ha(r)}}
    \di \tau
    \\
    &\quad
    \times
    \sup_{\tau_{n+1}<\tau<t}
    W(\{u(\tau)>k_{\infty}\})^{\frac{(s-r)(\SpDim-\alpha_{1})}{\Ha(r)}}
    \\
    &\le
    \frac{2^{n\ell}}{\sigma^{\ell}}
    t^{-\frac{(s-r)(\SpDim-\alpha_{1})}{\Ha(r)}}
    \Big(
    \int_{\tau_{n+1}}^{t}
    \int_{M}
    \abs{\grad v_{n+1}}^{p}
    \di\msr
    \di\tau
    \Big)^{\frac{(s-r)(\SpDim-\alpha_{1})}{\Ha(r)}}
    \\
    &\quad
    \times
    \sup_{\tau_{n+1}<\tau<t}
    \Big(
    \int_{M}
    \dnf
    v_{n+1}^{r}
    \di\msr
    \Big)^{\frac{\Ha(s)}{\Ha(r)}}
    \sup_{\tau_{n+1}<\tau<t}
    W(\{u(\tau)>k_{\infty}\})^{\frac{(s-r)(\SpDim-\alpha_{1})}{\Ha(r)}}
    \,.
  \end{split}
\end{equation}
Let us define for the sake of notational simplicity $\xi=\Ha(r)/[(s-r)(\SpDim-\alpha_{1})]$.
Next we invoke Young inequality to bound the right hand side of \eqref{eq:sup_v} by
\begin{equation}
  \label{eq:sup_vi}
  \begin{split}
    &\eps
    \int_{\tau_{n+1}}^{t}
    \int_{M}
    \abs{\grad v_{n+1}}^{p}
    \di\msr
    \di\tau
    +
    \gamma
    \frac{b^{n}}{\sigma^{\frac{\ell\xi}{\xi-1}}}
    \eps^{-\frac{1}{\xi-1}}
    t^{-\frac{(s-r)(\SpDim-\alpha_{1})}{(p-s)(\SpDim-\alpha_{1})+r(p-\alpha_{2})}}
    \\
    &\quad
    \times
    \sup_{\tau_{n+1}<\tau<t}
    W(\{u(\tau)>k_{\infty}\})^{\frac{(s-r)(\SpDim-\alpha_{1})}{(p-s)(\SpDim-\alpha_{1})+r(p-\alpha_{2})}}
    \\
    &\quad
    \times
    \sup_{\tau_{n+1}<\tau<t}
    \Big(
    \int_{M}
    \dnf
    v_{n+1}^{r}
    \di\msr
    \Big)^{\frac{\Ha(s)}{(p-s)(\SpDim-\alpha_{1})+r(p-\alpha_{2})}}
    \,,
  \end{split}
\end{equation}
where $b=2^{\ell\xi/(\xi-1)}$. On combining \eqref{eq:sup_i}--\eqref{eq:sup_vi}, and denoting
\begin{equation}
  \label{eq:sup_In}
  I_{n}
  =
  \sup_{\tau_{n}<\tau<t}
  \int_{M}
  \dnf
  v_{n}(\tau)^{s}
  \di\msr
  +
  \int_{\tau_{n}}^{t}
  \int_{M}
  \abs{\grad v_{n}}^{p}
  \di\msr
  \di\tau
  \,,
\end{equation}
we get the recursive inequality
\begin{equation}
  \label{eq:sup_kj}
  \begin{split}
    I_{n}
    &\le
    \eps
    I_{n+1}
    +
    \gamma
    \frac{b^{n}}{\sigma^{\frac{\ell\xi}{\xi-1}}}
    \eps^{-\frac{1}{\xi-1}}
    t^{-\frac{(s-r)(\SpDim-\alpha_{1})}{(p-s)(\SpDim-\alpha_{1})+r(p-\alpha_{2})}}
    \\
    &\quad
    \times
    \sup_{\tau_{\infty}<\tau<t}
    W(\{u(\tau)>k_{\infty}\})^{\frac{(s-r)(\SpDim-\alpha_{1})}{(p-s)(\SpDim-\alpha_{1})+r(p-\alpha_{2})}}
    \\
    &\quad
    \times
    \sup_{\tau_{\infty}<\tau<t}
    \Big(
    \int_{M}
    \dnf
    v_{\infty}^{r}
    \di\msr
    \Big)^{\frac{\Ha(s)}{(p-s)(\SpDim-\alpha_{1})+r(p-\alpha_{2})}}
    \,,
  \end{split}
\end{equation}
where $\tau_{\infty}=t(1-2\sigma)/2$, $v_{\infty}=\ppos{u-k_{\infty}}^{1/r}$.

A standard iteration process then implies that, when we select e.g., $\eps=1/(2b)$,
\begin{equation}
  \label{eq:sup_j}
  \begin{split}
    &\sup_{\tau_{0}<\tau<t}
    \int_{M}
    \dnf
    \ppos{u(\tau)-k_{0}}^{1+\theta}
    \di\msr
    \le
    \gamma
    \sigma^{-\frac{\ell\xi}{\xi-1}}
    t^{-\frac{\theta(\SpDim-\alpha_{1})}{\eta}}
    \\
    &\quad
    \times
    \sup_{\tau_{\infty}<\tau<t}
    W(\{u(\tau)>k_{\infty}\})^{\frac{\theta(\SpDim-\alpha_{1})}{\eta}}
    \\
    &\quad
    \times
    \sup_{\tau_{\infty}<\tau<t}
    \Big(
    \int_{M}
    \dnf
    \ppos{u(\tau)-k_{\infty}}
    \di\msr
    \Big)^{
      1
      +
      \frac{\theta(p-\alpha_{2})}{\eta}
    }
    \,,
  \end{split}
\end{equation}
where we have computed
\begin{gather*}
  \frac{(s-r)(\SpDim-\alpha_{1})}{(p-s)(\SpDim-\alpha_{1})+r(p-\alpha_{2})}
  =
  \frac{\theta(\SpDim-\alpha_{1})}{\eta}
  \,,
  \\
  \frac{\Ha(s)}{(p-s)(\SpDim-\alpha_{1})+r(p-\alpha_{2})}
  =
  1
  +
  \frac{\theta(p-\alpha_{2})}{\eta}
  \,,
  \\
  \eta
  =
  (\SpDim-\alpha_{1})(p+m-3)
  +
  p-\alpha_{2}
  \,.
\end{gather*}
Next we set in \eqref{eq:sup_j}
\begin{gather*}
  \tau_{\infty}
  =
  t_{j}
  =
  \frac{t}{2}
  (1-2^{-j-2})
  \,,
  \quad
  \tau_{0}
  =
  t_{j+1}
  \,,
  \quad
  \sigma
  =
  2^{-j-3}
  \,,
  \\
  k_{\infty}
  =
  h_{j}
  =
  k(1-2^{-j-2}
  )
  \,,
  \quad
  k_{0}
  =
  h_{j+1}
  \,,
  \\
  y_{j}
  =
  \sup_{t_{j}<\tau<t}
  \int_{M}
  \dnf
  \ppos{u(\tau)-h_{j}}
  \di\msr
  \,,
  \qquad
  j\ge 0
  \,.
\end{gather*}
Note that by Chebyshev inequality we obtain there
\begin{equation}
  \label{eq:sup_che1}
  W(\msd(\{u(\tau)>h_{j}\}))
  \le
  W\Big(
  2h_{j}^{-1}
  \int_{M}
  \dnf
  u(\tau)
  \di\msr
  \Big)
  \le
  \gamma
  W(k^{-1}U)
  \,,
\end{equation}
when we employ also Lemma~\ref{l:aux_function} and 
\begin{equation*}
  U
  :=
  \sup_{\frac{t}{2}<\tau<t}
  \int_{M}
  \dnf
  u(\tau)
  \di\msr
  \ge
  y_{j}
  \,,
  \qquad
  j\ge 0
  \,.
\end{equation*}
By the same token, the left hand side of \eqref{eq:sup_j} is bounded from below by
\begin{equation}
  \label{eq:sup_che2}
  k^{\theta}
  2^{-(j+4)\theta}
  y_{j+2}
  \,.
\end{equation}
Thus we infer for $Y_{i}=y_{2i}$, $i\ge 0$,
\begin{equation}
  \label{eq:sup_jj}
  Y_{i+1}
  \le
  \gamma
  4^{(\frac{\ell\xi}{\xi-1}+\theta)i}
  k^{-\theta}
  t^{-\frac{\theta(\SpDim-\alpha_{1})}{\eta}}
  W(k^{-1}U)^{\frac{\theta(\SpDim-\alpha_{1})}{\eta}}
  Y_{i}^{1+\frac{\theta(p-\alpha_{2})}{\eta}}
  \,.
\end{equation}
According to the classical result \cite[Lemma 5.6, Chapt. II]{LSU}, we get that $Y_{i}\to 0$ as $i\to+\infty$ provided
\begin{equation}
  \label{eq:sup_jv}
  k^{-1}
  t^{-\frac{\SpDim-\alpha_{1}}{\eta}}
  W(k^{-1}U)^{\frac{\SpDim-\alpha_{1}}{\eta}}
  U^{\frac{p-\alpha_{2}}{\eta}}
  \le
  \gamma_{0}
  \,,
\end{equation}
for a suitably small constant $\gamma_{0}>0$ depending on the parameters in \eqref{eq:sup_jj}.

When we recall the definition of the function $W$ we see that \eqref{eq:sup_jv} reduces to, when we use the bound for mass in \eqref{eq:mass_subcr_a},
\begin{equation}
  \label{eq:sup_vj}
  k^{-(p+m-3)}
  t^{-1}
  \dnf\big(\dlov(k^{-1}\norma{\dnf u_{0}}{1})\big)
  \dlov(k^{-1}\norma{\dnf u_{0}}{1})^{p}
  =
  \gamma_{0}
  \,.
\end{equation}
We conclude by invoking Remark~\ref{r:decayf}.

\section{Proof of the sup estimate in the case $p+m-3<0$.}
\label{s:sin}
We borrow the notation from Section~\ref{s:sup}, assuming also $\theta>3-p-m$ is so large that $r<p$, and that $s>p$ satisfies \eqref{eq:fks_nn}: see Remark~\ref{r:fks}. Let us also note explicitly that assumption \eqref{eq:sup_n}, with some elementary algebra, yields
\begin{equation}
  \label{eq:sin_iii}
  (\SpDim-\alpha_{1})(p-s)
  +
  r(p-\alpha_{2})
  >0
  \,.
\end{equation}
which in turn implies immediately $\Ha(s)>0$ ($\Ha$ has been defined in Corollary~\ref{co:sgn_W}).

We start again from \eqref{eq:sup_i}; on the right hand side there by means of \eqref{eq:sgns_W_n}, we get
\begin{multline}
  \label{eq:sin_i}
  \int_{M}
  \dnf
  v_{n+1}^{s}
  \di\msr
  \le
  \gamma
  \Big(
  \int_{M}
  \dnf
  v_{n+1}^{r}
  \di\msr
  \Big)^{\frac{\Ha(s)}{\Ha(r)}}
  \\
  \times
  W(\varSigma)^{\frac{(s-r)(\SpDim-\alpha_{1})}{\Ha(r)}}
  \Big(
  \int_{M}
  \abs{\grad v_{n+1}}^{p}
  \di\msr
  \Big)^{\frac{(s-r)(\SpDim-\alpha_{1})}{\Ha(r)}}
  \,,
\end{multline}
where $\varSigma$ is then bounded as in \eqref{eq:sup_iv}. From now on, the proof proceeds formally unchanged as in Section~\ref{s:sup}, with the remarks below.

We use H\"older inequality to bound the right hand side of
\eqref{eq:sup_i} exactly with the right hand side of \eqref{eq:sup_v}
(with the current values of $s$, $r$).  Note that the condition
\begin{equation}
  \label{eq:sin_ii}
  \frac{(s-r)(\SpDim-\alpha_{1})}{\Ha(r)}
  <
  1
  \,,
\end{equation}
required in order to apply H\"older (and, next, Young) inequality, is not
automatically satisfied as in the case of Section~\ref{s:sup}; anyway,
it is easily seen by a direct elementary calculation that
\eqref{eq:sin_ii} is equivalent to \eqref{eq:sin_iii}.

Then we apply Young inequality to arrive at \eqref{eq:sup_kj}. Here we specifically note that all exponents have the expected sign owing to \eqref{eq:sin_iii}, and that in \eqref{eq:sup_j} $\eta$ is positive by assumption \eqref{eq:sup_n}.

At the end of the proof, in order to define $k$ by means of \eqref{eq:sup_vj}, we exploit our assumption that $\fpsf$ is increasing.

\section{Proof of the finite speed of propagation}
\label{s:fsp}

Here we introduce the notation
\begin{gather*}
  A_{n}
  =
  \{
  x\in\ M
  \mid
  R'_{n}<d(x)<R''_{n}
  \}
  \,,
  \quad
  0<\eta\,,\sigma\le \frac{1}{4}
  \,,
  \quad
  R\ge 4 R_{0}
  \,,
  \\
  R'_{n}
  =
  \frac{R}{2}
  (
  1-\eta-\sigma+\sigma 2^{-n}
  )
  \,,
  \quad
  R''_{n}
  =
  \frac{R}{2}
  (
  1+\eta+\sigma-\sigma 2^{-n}
  )
  \,.
\end{gather*}
We also introduce a standard cutoff function $\zeta_{n}\in C^{1}_{0}(A_{n+1})$ such that
\begin{equation*}
  0\le\zeta_{n}\le 1
  \,;
  \quad
  \zeta_{n}(x)
  =
  1
  \,,
  \quad
  x\in A_{n}
  \,;
  \quad
  \abs{\grad \zeta_{n}(x)}
  \le
  \gamma
  2^{n}
  (\sigma R)^{-1}
  \,.
\end{equation*}
Then for $\theta>0$ as in Section~\ref{s:sup} we define
\begin{equation*}
  r
  =
  \frac{p}{p+m+\theta-2}
  \,,
  \quad
  s
  =
  (1+\theta)r
  <p
  \,,
  \quad
  v_{n}
  =
  (u\zeta_{n})^{\frac{p+m+\theta-2}{p}}
  \,.
\end{equation*}
Note that no $A_{n}$ intersects $\supp u_{0}$. Then from Lemma~\ref{l:cacc2} we get
\begin{multline}
  \label{eq:fsp_i}
  J_{n}
  :=
  \sup_{0<\tau<t}
  \int_{M}
  v_{n}^{s}
  \dnf
  \di\msr
  +
  \int_{0}^{t}
  \int_{M}
  \abs{\grad v_{n}}^{p}
  \di\msr
  \di\tau
  \\
  \le
  \gamma
  \frac{2^{np}}{\sigma^{p}R^{p}}
  \int_{0}^{t}
  \int_{M}
  v_{n+1}^{p}
  \di\msr
  \di\tau
  \,.
\end{multline}
Next we apply Corollary~\ref{co:emb_old_p} to $v_{n+1}$ and get for the present choice of $r<p$ (remember that the support of $v_{n+1}$ is contained in an annulus), and for all times, 
\begin{multline}
  \label{eq:fsp_ii}
  \int_{M}
  v_{n+1}^{p}
  \di \msr
  \le
  \gamma
  \ipf(\vol(R))^{\frac{p\SpDim(p-r)}{\SpDim(p-r)+rp}}
  \Big(
  \int_{M}
  v_{n+1}^{r}
  \di \msr
  \Big)^{\frac{p^{2}}{\SpDim(p-r)+rp}}
  \\
  \times
  \Big(
  \int_{M}
  \abs{\grad v_{n+1}}^{p}
  \di\msr
  \Big)^{\frac{\SpDim(p-r)}{\SpDim(p-r)+rp}}
  \,.
\end{multline}
We integrate in time this estimate, and apply Young inequality, to bound above the right hand side of \eqref{eq:fsp_i} with
\begin{multline}
  \label{eq:fsp_iii}
  \eps
  \int_{0}^{t}
  \abs{\grad v_{n+1}}^{p}
  \di\msr
  \\
  +
  \gamma
  \eps^{-\frac{\SpDim(p-r)}{rp}}
  t
  \Big(
  \frac{2^{n}}{\sigma R}
  \Big)^{\frac{\SpDim(p-r)+rp}{r}}
  \ipf(\vol(R))^{\frac{\SpDim(p-r)}{r}}
  \sup_{0<\tau<t}
  \Big(
  \int_{M}
  v_{n+1}^{r}
  \di\msr
  \Big)^{\frac{p}{r}}
  \,.
\end{multline}
From \eqref{eq:fsp_i} and \eqref{eq:fsp_iii} we infer, on substituting the definition of $r$, the recursive inequality
\begin{multline}
  \label{eq:fsp_iv}
  J_{n}
  \le
  \eps
  J_{n+1}
  +
  \gamma
  \eps^{-\frac{\SpDim}{p}(p+m+\theta-3)}
  \Big(
  \frac{2^{n}}{\sigma}
  \Big)^{\SpDim(p+m+\theta-3)+p}
  \\
  \times
  t
  \vol(R)^{-(p+m+\theta-3)}
  R^{-p}
  \sup_{0<\tau<t}
  \Big(
  \int_{A_{\infty}}
  u
  \di\msr
  \Big)^{p+m+\theta-2}
  \,,
\end{multline}
where
\begin{equation*}
  A_{\infty}
  =
  \Big\{
  x\in\ M
  \mid
  \frac{R}{2}
  (
  1-\eta-\sigma
  )
  <d(x)<
  \frac{R}{2}
  (
  1+\eta+\sigma
  )
  \Big\}
  \,.
\end{equation*}
A standard iterative argument, relying on a suitable choice of $\eps$, then yields the estimate
\begin{multline}
  \label{eq:fsp_v}
  \sup_{0<\tau<t}
  \int_{A_{0}}
  \dnf
  u^{1+\theta}
  \di\msr
  \le
  \gamma
  \sigma^{-\SpDim(p+m+\theta-3)-p}
  \\
  \times
  t
  \vol(R)^{-(p+m+\theta-3)}
  R^{-p}
  \sup_{0<\tau<t}
  \Big(
  \int_{A_{\infty}}
  u
  \di\msr
  \Big)^{p+m+\theta-2}
  \,.
\end{multline}
Define next a sequence of shrinking annuli
\begin{gather*}
  D_{n}
  =
  \{
  x\in M
  \mid
  \bar R'_{n}
  <d(x)<
  \bar R''_{n}
  \}
  \,,
  \\
  \bar R'_{n}
  =
  \frac{R}{2}(1-2^{-n-1})
  \,,
  \qquad
  \bar R''_{n}
  =
  \frac{R}{2}(1+2^{-n-1})
  \,.
\end{gather*}
We apply inequality \eqref{eq:fsp_v} with $A_{0}=D_{n+1}$, $A_{\infty}=D_{n}$, $\sigma=\eta=2^{-n-2}$, $n\ge 0$, and obtain, using the the fact that $\dnf(d(x))\ge \dnf(R)$ in $D_{n}$,
\begin{multline}
  \label{eq:fsp_vi}
  \sup_{0<\tau<t}
  \int_{D_{n+1}}
  \dnf
  u^{1+\theta}
  \di\msr
  \le
  \gamma
  (2^{n})^{\SpDim(p+m+\theta-3)-p}
  \\
  \times
  t
  \vol(R)^{-(p+m+\theta-3)}
  R^{-p}
  \dnf(R)^{-(p+m+\theta-2)}
  \sup_{0<\tau<t}
  \Big(
  \int_{D_{n}}
  \dnf
  u
  \di\msr
  \Big)^{p+m+\theta-2}
  \,.
\end{multline}
We get from H\"older inequality that
\begin{equation}
  \label{eq:fsp_vii}
  \begin{split}
    Y_{n}
    &:=
    \sup_{0<\tau<t}
    \int_{D_{n}}
    \dnf
    u
    \di\msr
    \le
    \Big(
    \int_{D_{n}}
    \dnf
    \di\msr
    \Big)^{\frac{\theta}{1+\theta}}
    \\
    &\quad
    \times
    \Big(
    \sup_{0<\tau<t}
    \int_{D_{n}}
    \dnf
    u^{1+\theta}
    \di\msr
    \Big)^{\frac{1}{1+\theta}}
    \\
    &\le
    \gamma
    \big(
    \vol(R)\dnf(R)
    \big)^{\frac{\theta}{1+\theta}}
    \Big(
    \sup_{0<\tau<t}
    \int_{D_{n}}
    \dnf
    u^{1+\theta}
    \di\msr
    \Big)^{\frac{1}{1+\theta}}
    \,,
  \end{split}
\end{equation}
where we also took into account \eqref{eq:fsp_m}.

From \eqref{eq:fsp_vi} and \eqref{eq:fsp_vii} we infer at once
\begin{equation}
  \label{eq:fsp_viii}
  Y_{n+1}
  \le
  \gamma
  b^{n}
  \Big(
  \frac{
    t
  }{
    \vol(R)^{p+m-3}
    R^{p}
    \dnf(R)^{p+m-2}
  }
  \Big)^{\frac{1}{1+\theta}}
  Y_{n}^{1+\frac{p+m-3}{1+\theta}}
  \,,
\end{equation}
where $b$ is a suitable power of $2$. It follows from \cite[Lemma 5.6, Chapt. II]{LSU} that $Y_{n}\to 0$ as $n\to\infty$ if $R$ is chosen so that
\begin{equation}
  \label{eq:fsp_ix}
  \frac{
    t
  }{
    \vol(R)^{p+m-3}
    R^{p}
    \dnf(R)^{p+m-2}
  }
  \Big(
  \int_{M}
  \dnf
  u_{0}
  \di\msr
  \Big)^{p+m-3}
  \le
  \gamma_{0}
  \,,
\end{equation}
for a suitable constant $\gamma_{0}>0$ depending on the parameters of the problem. We also use here the bound in \eqref{eq:mass_subcr_a}.

Finally, note that according to the definition of $Y_{n}$ we proved that $u(x,t)=0$ for $x\in M\setminus B_{R}$ if $R$ satisfies \eqref{eq:fsp_ix} and of course the condition $\supp u_{0}\subset B_{R/4}$ stated at the beginning of the proof. We have thus proved the sought after result.

\section{Proof of the universal bound}
\label{s:unb}

We need the following

\begin{lemma}
  \label{l:unb_emb}
  Assume that the assumptions of Lemma~\ref{l:sob}, \eqref{eq:M_inc}, \eqref{eq:M_isoup} hold true, and that for $\alpha>0$, $\beta\in(0,\SpDim)$ we have for a suitable $c>0$
  \begin{equation}
    \label{eq:unb_emb_dnf}
    \dnf(\tau)
    \le
    c^{-1} \tau^{-\alpha}
    \,,
    \quad
    \tau>1
    \,,
  \end{equation}
  and
  \begin{equation}
    \label{eq:unb_emb_vol}
    \vol(\tau)
    \ge
    c
    \tau^{\beta}
    \,,
    \quad
    \tau>1
    \,.
  \end{equation}
  In addition we require that
  one of the following holds:
  \begin{gather*}
    p<\beta
    \,,
    \quad
    \alpha \ge \beta
    \,,
    \quad
    0<r<p^{*}
    \,,
    \\
    p<\beta
    \,,
    \quad
    p\frac{\SpDim-\beta}{\SpDim-p}
    <\alpha < \beta
    \,,
    \quad
    p\frac{\beta-\alpha}{\beta-p}
    <
    r<p^{*}
    \,,
    \\
    p=\beta
    \,,
    \quad
    \alpha > \beta
    \,,
    \quad
    0<r<p^{*}
    \,,
    \\
    p>\beta
    \,,
    \quad
    0<r<\min \Big(
    p^{*}
    ,
    p
    \frac{\alpha-\beta}{p-\beta}
    \Big)
    \,.
  \end{gather*}
  Then
  \begin{equation}
    \label{eq:unb_emb_n}
    \int_{M}
    \dnf
    \abs{u}^{r}
    \di\msr
    \le
    \gamma
    \Big(
    \int_{M}
    \abs{\grad u}^{p}
    \di\msr
    \Big)^{\frac{r}{p}}
    \,.
  \end{equation}
\end{lemma}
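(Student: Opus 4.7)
The plan is a H\"older inequality combined with the weighted Sobolev inequality of Lemma~\ref{l:sob}, reducing matters to the integrability of an explicit radial weight. Since $r<p^{*}$ in every regime, H\"older's inequality with conjugate exponents $p^{*}/r$ and $p^{*}/(p^{*}-r)$, paired with the Sobolev weight $\iso(\vol(d(x)))^{-r}$, gives
\begin{equation*}
  \int_{M}\dnf\abs{\unk}^{r}\di\msr
  \le
  \Big(\int_{M}\abs{\unk}^{p^{*}}\iso(\vol(d(x)))^{-p^{*}}\di\msr\Big)^{r/p^{*}}
  \Big(\int_{M}\dnf^{A}\iso(\vol(d(x)))^{rA}\di\msr\Big)^{(p^{*}-r)/p^{*}}
\end{equation*}
with $A=p^{*}/(p^{*}-r)$. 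By Lemma~\ref{l:sob} the first factor is dominated by $\gamma\norma{\grad \unk}{p}^{r}$, so the claim reduces to finiteness of the second factor, call it $J$.

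Since the hypotheses of Lemma~\ref{l:sob} include those of Lemma~\ref{l:aux_vol}, the relation \eqref{eq:aux_vol_j} yields $\iso(\vol(\tau))\sim \vol(\tau)/\tau$, while \eqref{eq:M_grow}--\eqref{eq:M_growup} give $\vol'(\tau)\sim \vol(\tau)/\tau$; by coarea,
\begin{equation*}
  J\sim\int_{0}^{\infty}\dnf(\tau)^{A}\frac{\vol(\tau)^{rA+1}}{\tau^{rA+1}}\di\tau.
\end{equation*}
The contribution near $\tau=0$ is finite by the locally Euclidean behaviour of $M$. For the tail $\tau>1$ I would insert the decay bound $\dnf(\tau)\le c^{-1}\tau^{-\alpha}$ together with the assumed lower bound $\vol(\tau)\ge c\tau^{\beta}$ and the upper bound $\vol(\tau)\le \vol(1)\tau^{\SpDim}$ coming from \eqref{eq:M_inc}. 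In regimes where the Sobolev-based H\"older does not produce a sharp enough exponent, I would instead use H\"older with conjugate exponents $p/r$ and $p/(p-r)$ (valid when $r<p$), paired with the Hardy weight $d(x)^{-1}$, and invoke Theorem~\ref{t:hardy} in place of Lemma~\ref{l:sob}; this effectively replaces $\iso(\vol(d(x)))^{-p^{*}}$ by $d(x)^{-p}$ in the Sobolev factor, and is the better choice when $\beta$ is small.

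The four parameter windows in the statement are exactly those in which one of these two inequalities (or a direct interpolation between them) produces a tail integrand which, after inserting the appropriate volume and density bounds, becomes an integrable strictly negative power of $\tau$. The principal obstacle is bookkeeping: for each regime one must pick the correct Sobolev-type inequality and the correct volume estimate (the $\beta$-lower bound versus the $\SpDim$-upper bound) so that the resulting exponent is $<-1$. The delicate endpoints are the thresholds $r=p(\beta-\alpha)/(\beta-p)$ in case ii) and $r=p(\alpha-\beta)/(p-\beta)$ in case iv), where the interpolation is at its sharpness; there the scaling $\alpha\sim\SpDim-r(\SpDim-p)/p$ of classical Hardy--Sobolev appears, but with $\beta$ in place of $\SpDim$, reflecting that the effective dimension for the weighted inequality is dictated by the polynomial volume growth.
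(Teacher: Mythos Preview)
Your overall plan---H\"older plus the weighted Sobolev inequality of Lemma~\ref{l:sob}, reducing to a radial integrability check---is exactly the paper's approach. But you have confused the isoperimetric profile $\iso$ with the Sobolev weight $\ipf$. The weight in Lemma~\ref{l:sob} is $\ipf\big(\vol(d(x))\big)^{-p^{*}}$, where $\ipf(s)=s^{(\SpDim-1)/\SpDim}/\iso(s)$, not $\iso\big(\vol(d(x))\big)^{-p^{*}}$. Consequently your first H\"older factor is not the one Lemma~\ref{l:sob} controls, so the step ``By Lemma~\ref{l:sob} the first factor is dominated by $\gamma\norma{\grad \unk}{p}^{r}$'' is invalid as written. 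Moreover, while \eqref{eq:aux_vol_j} indeed gives $\iso(\vol(\tau))\sim \vol(\tau)/\tau$, the correct weight satisfies $\ipf(\vol(\tau))\sim \tau\,\vol(\tau)^{-1/\SpDim}$; hence your formula for $J$ carries the wrong power of $\vol$: you obtain $\vol^{rA+1}$, whereas the correct exponent is $1-rA/\SpDim$, which changes sign at $r=p$ and is precisely what drives the four-case dichotomy.

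With the correct weight, the paper needs only the single H\"older--Sobolev step. One arrives at
\begin{equation*}
J=\int_{M}\dnf^{A}\,\ipf\big(\vol(d(x))\big)^{rA}\di\msr
\le \gamma+\gamma\int_{1}^{+\infty}\dnf(\tau)^{A}\,\tau^{rA-1}\,\vol(\tau)^{1-rA/\SpDim}\di\tau,
\end{equation*}
using \eqref{eq:M_inc}, \eqref{eq:M_isoup}; substituting \eqref{eq:unb_emb_dnf}, \eqref{eq:unb_emb_vol} yields a pure power of $\tau$ whose integrability is checked directly against the four parameter windows. No appeal to the Hardy inequality of Theorem~\ref{t:hardy}, nor any interpolation between Sobolev and Hardy, is needed; your proposed detour through Hardy in ``some regimes'' is a symptom of the wrong weight, not a genuine alternative route.
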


\begin{proof}
  From H\"older inequality and Lemma~\ref{l:sob} we infer
  \begin{multline}
    \label{eq:unb_emb_i}
    \int_{M}
    \dnf
    \abs{u}^{r}
    \di\msr
    \le
    \Big(
    \int_{M}
    \abs{u}^{p^{*}}
    \ipf(\vol(d(x)))^{-p^{*}}
    \di\msr
    \Big)^{\frac{r}{p^{*}}}
    J^{1-\frac{r}{p^{*}}}
    \\
    \le
    \gamma
    \Big(
    \int_{M}
    \abs{\grad u}^{p}
    \di\msr
    \Big)^{\frac{r}{p}}
    J^{1-\frac{r}{p^{*}}}
    \,,
  \end{multline}
  where
  \begin{multline}
    \label{eq:unb_emb_ii}
    J
    :=
    \int_{M}
    \dnf(d(x))^{\frac{p^{*}}{p^{*}-r}}
    \ipf(\vol(d(x)))^{\frac{rp^{*}}{p^{*}-r}}
    \di \msr
    \\
    \le
    \gamma
    +
    \int_{M\setminus B_{1}}
    \dnf(d(x))^{\frac{p^{*}}{p^{*}-r}}
    \ipf(\vol(d(x)))^{\frac{rp^{*}}{p^{*}-r}}
    \di \msr
    \,.
  \end{multline}
  We majorize the last integral as
  \begin{multline}
    \label{eq:unb_emb_iii}
    \int_{1}^{+\infty}
    \dnf(\tau)^{\frac{p^{*}}{p^{*}-r}}
    \ipf(\vol(\tau))^{\frac{rp^{*}}{p^{*}-r}}
    \der{\vol}{\tau}(\tau)
    \di\tau
    \\
    \le
    \gamma
    \int_{1}^{+\infty}
    \dnf(\tau)^{\frac{p^{*}}{p^{*}-r}}
    \tau^{\frac{rp^{*}}{p^{*}-r}-1}
    \vol(\tau)^{1-\frac{rp^{*}}{\SpDim(p^{*}-r)}}
    \di\tau
    \,,
  \end{multline}
  where we have used the definition of $\ipf$ and \eqref{eq:M_inc}, \eqref{eq:M_isoup}. Finally, the right hand side of \eqref{eq:unb_emb_iii} is bounded by
  \begin{equation}
    \label{eq:unb_emb_iv}
    \int_{1}^{+\infty}
    \tau^{\frac{p^{*}(r-\alpha)}{p^{*}-r}-1+\beta-\frac{\beta rp^{*}}{\SpDim(p^{*}-r)}}
    \di\tau
    \,,
  \end{equation}
  according to our assumptions on $\vol$ and $\dnf$. The last integral converges in the cases given in the statement, as a direct inspection shows.
\end{proof}

\begin{proof}[Proof of Theorem]
  We use the notation introduced in Section~\ref{s:sup}, with the exception that while we still denote $s=p(1+\theta)/(p+m+\theta-2)<p$, $\theta>0$, we choose $p<r<p^{*}$. We may apply Lemma~\ref{l:unb_emb}, since \eqref{eq:unb_emb_vol} with $\beta=p$ follows under our assumptions from \eqref{eq:aux_vold_n}, so that $p<r<p^{*}$ is in the admissible range for Lemma~\ref{l:unb_emb}. Then we have at every time level, since $s<p<r$,
  \begin{multline}
    \label{eq:unb_i}
    \int_{M}
    \dnf
    v_{n+1}^{s}
    \di\msr
    \le
    \Big(
    \int_{M}
    \dnf
    v_{n+1}^{r}
    \di\msr
    \Big)^{\frac{s}{r}}
    \msd(\{u>k_{n+1}\})^{1-\frac{s}{r}}
    \\
    \le
    \gamma
    \Big(
    \int_{M}
    \abs{\grad v_{n+1}}^{p}
    \di\msr
    \Big)^{\frac{s}{p}}
    \msd(\{u>k_{\infty}\})^{1-\frac{s}{r}}
    \,.
  \end{multline}
  Then we apply to \eqref{eq:sup_i} the estimate \eqref{eq:unb_i} and Young inequality; with the notation \eqref{eq:sup_In}, we arrive at
  \begin{multline}
    \label{eq:unb_ii}
    I_{n}
    \le
    \eps
    \int_{\tau_{n+1}}^{t}
    \int_{M}
    \abs{\grad v_{n+1}}^{p}
    \di\msr
    \di\tau
    \\
    +
    \gamma
    \eps^{-\frac{s}{p-s}}
    \frac{2^{\frac{n\ell p}{p-s}}}{\sigma^{\frac{\ell p}{p-s}}}
    t^{-\frac{s}{p-s}}
    \sup_{\tau_{\infty}<\tau<t}
    \msd(\{u(\tau)>k_{\infty}\})^{\frac{p(r-s)}{r(p-s)}}
    \,.
  \end{multline}
  Then an iterative argument essentially identical to the one employed in \eqref{eq:sup_vi}--\eqref{eq:sup_kj} leads us, for a suitable choice of $\eps>0$, to
  \begin{multline}
    \label{eq:unb_iv}
    \sup_{\tau_{0}<\tau<t}
    \int_{M}
    \dnf
    \ppos{u(\tau)-k_{0}}^{1+\theta}
    \di\msr
    \\
    \le
    \gamma
    \sigma^{-\frac{\ell (p+m+\theta-2)}{p+m-3}}
    t^{-\frac{1+\theta}{p+m-3}}
    \sup_{\tau_{\infty}<\tau<t}
    \msd(\{u(\tau)>k_{\infty}\})^{1+\frac{(r-p)(1+\theta)}{r(p+m-3)}}
    \,.
  \end{multline}
  Note that the last exponent is greater than $1$ owing to our choice $r>p$.
  
  We proceed as in Section~\ref{s:sup}, introducing $h_{j}$, $y_{j}$ and $Y_{i}=y_{2i}$ as there. When we take into account that
  \begin{equation*}
    \msd(\{u(\tau)>h_{j}\})
    \le
    2^{j+2}
    k^{-1}
    y_{j-1}
    \,,
  \end{equation*}
  we get (remember that we set $k_{\infty}=h_{j}$, $k_{0}=h_{j+1}$)
  \begin{equation}
    \label{eq:unb_v}
    Y_{i+1}
    \le
    \gamma
    4^{\big(\frac{\ell (p+m+\theta-2)}{p+m-3}+\theta+1\big)i}
    k^{-(\theta +1)\big( 1 + \frac{r-p}{r(p+m-3)}\big)}
    t^{-\frac{1+\theta}{p+m-3}}
    Y_{i}^{1+\frac{(r-p)(1+\theta)}{r(p+m-3)}}
    \,,
  \end{equation}
  for $i\ge 0$. It follows from \cite[Lemma 5.6, Chapt. II]{LSU} that $Y_{i}\to 0$ as $i\to+\infty$, i.e., $u(t)\le k$, if
  \begin{equation}
    \label{eq:unb_vi}
    k^{-1-\frac{r-p}{r(p+m-3)}}
    t^{-\frac{1}{p+m-3}}
    Y_{0}^{\frac{r-p}{r(p+m-3)}}
    \le
    \gamma_{0}
    \,,
  \end{equation}
  for a suitable constant $\gamma_{0}>0$ depending on the parameters. We bound the integral appearing in $Y_{0}$ (at each time level) as
  \begin{equation*}
    \begin{split}
      &\int_{M}
      \dnf
      \Big(u-\frac{3}{4}k\Big)_{+}
      \di\msr
      \le
      \Big(
      \int_{M}
      \dnf
      \Big(u-\frac{3}{4}k\Big)_{+}^{q+1}
      \di\msr
      \Big)^{\frac{1}{q+1}}
      \msd\Big(\Big\{u>\frac{3}{4}k\Big\}\Big)^{\frac{q}{q+1}}
      \\
      &\quad
      \le
      \Big(
      \int_{M}
      \dnf
      \Big(u-\frac{3}{4}k\Big)_{+}^{q+1}
      \di\msr
      \Big)^{\frac{1}{q+1}}
      \Big(
      (4k^{-1})^{q+1}
      \int_{M}
      \dnf
      \Big(u-\frac{k}{2}\Big)_{+}^{q+1}
      \di\msr
      \Big)^{\frac{q}{q+1}}
      \\
      &\quad
      \le
      (
      4k^{-1}
      )^{q}
      \int_{M}
      \dnf
      u^{q+1}
      \di\msr
      \,,
    \end{split}
  \end{equation*}
  for $q>0$. It follows then from \eqref{eq:unb_vi} (with an equality) that
  \begin{equation}
    \label{eq:unb_j}
    \norma{u(t)}{\infty}
    \le
    \gamma
    t^{-\frac{r}{\Ka}}
    \sup_{t/4<\tau<t}
    E_{q+1}(\tau)^{\frac{r-p}{\Ka}}
    \,,
    \qquad
    t>0
    \,,
  \end{equation}
  for $E_{q+1}$ defined as in \eqref{eq:ES_not}, and
  \begin{equation*}
    \Ka
    =
    r(p+m-3)
    +
    (r-p)(q+1)
    \,.
  \end{equation*}
  We are left with the task of estimating $E_{q+1}(\tau)$; this will be accomplished by appealing again to Lemma~\ref{l:unb_emb}, where we select
  \begin{equation}
    \label{eq:unb_jj}
    0
    <
    r'
    :=
    \frac{p(1+q)}{p+m+q-2}
    <
    p
    <
    p^{*}
    \,.
  \end{equation}
  We obtain from the differential equation \eqref{eq:pde}, for $w=u^{(p+m+q-2)/p}$ the equality in
  \begin{multline}
    \label{eq:unb_jjj}
    \frac{1}{q+1}
    \der{E_{q+1}}{t}
    =
    -
    \Big(
    \frac{p}{p+m+\theta-2}
    \Big)^{p}
    \int_{M}
    \abs{\grad w}^{p}
    \di\msr
    \le
    -
    \gamma
    E_{q+1}(t)^{\frac{p+m+q-2}{1+q}}
    \,,
  \end{multline}
  where the inequality follows from an application of \eqref{eq:unb_emb_n} with $r$ replaced with the $r'$ given in \eqref{eq:unb_jj}. On integrating \eqref{eq:unb_jjj} we get
  \begin{equation}
    \label{eq:unb_jv}
    E_{q+1}(t)
    \le
    \gamma
    t^{-\frac{1+q}{p+m-3}}
    \,,
    \qquad
    t>0
    \,;
  \end{equation}
  actually we integrate over $(t_{0},t)$ and then let $t_{0}\to 0+$ in order to circumvent possible problems with the local summability of the initial data. Finally we substitute \eqref{eq:unb_jv} in \eqref{eq:unb_j} to prove the claim of the Theorem.
\end{proof}

\section{Interface blow up}
\label{s:ibl}

Let us reason by contradiction, and assume that the support of $u(t)$ stays bounded for all times.

Then we get for all times and for a fixed $\theta>0$, by means of H\"older and Hardy inequality
\begin{multline}
  \label{eq:ibl_i}
  \int_{M}
  \dnf
  u
  \di\msr
  \le
  \Big(
  \int_{M}
  d(x)^{-p}
  u^{p+m+\theta-2}
  \di \msr
  \Big)^{\frac{1}{p+m+\theta-2}}
  I(\theta)^{\frac{p+m+\theta-3}{p+m+\theta-2}}
  \\
  \le
  \gamma
  \Big(
  \int_{M}
  \abs{\grad u^{\frac{p+m+\theta-2}{p}}}^{p}
  \di \msr
  \Big)^{\frac{1}{p+m+\theta-2}}
  I(\theta)^{\frac{p+m+\theta-3}{p+m+\theta-2}}
  \,,
\end{multline}
where, when we recall \eqref{eq:M_inc}
\begin{equation*}
  \begin{split}
  I(\theta)
  &=
  \int_{M}
  \big(
  d(x)^{p}
  \dnf(x)^{p+m+\theta-2}
  \big)^{\frac{1}{p+m+\theta-3}}
  \di\msr
  \\
  &\le
  \SpDim
  \int_{0}^{+\infty}
  \big(
  \tau^{p}
  \dnf(\tau)^{p+m+\theta-2}
  \big)^{\frac{1}{p+m+\theta-3}}
  \frac{\vol(\tau)}{\tau}
  \di\tau
  \\
  &=
  \SpDim
  \int_{0}^{+\infty}
  \big(
  \tau^{p}
  \dnf(\tau)
  \big)^{-\frac{\theta}{(p+m-3)(p+m+\theta-3)}}
  \fpsf(\tau)^{\frac{1}{p+m-3}}
  \frac{\di\tau}{\tau}
  <
  +\infty
  \,,
  \end{split}
\end{equation*}
by assumption \eqref{eq:ibl_n}, if $\theta$ is chosen suitably small.

By means of calculation in all similar we get
\begin{multline}
  \label{eq:ibl_ii}
  \int_{M}
  \dnf
  u^{1+\theta}
  \di\msr
  \le
  \Big(
  \int_{M}
  d(x)^{-p}
  u^{p+m+\theta-2}
  \di \msr
  \Big)^{\frac{1+\theta}{p+m+\theta-2}}
  J(\theta)^{\frac{p+m-3}{p+m+\theta-2}}
  \\
  \le
  \gamma
  \Big(
  \int_{M}
  \abs{\grad u^{\frac{p+m+\theta-2}{p}}}^{p}
  \di \msr
  \Big)^{\frac{1+\theta}{p+m+\theta-2}}
  J(\theta)^{\frac{p+m-3}{p+m+\theta-2}}
  \,,
\end{multline}
where
when we recall \eqref{eq:M_inc}
\begin{equation*}
  \begin{split}
  J(\theta)
  &=
  \int_{M}
  \big(
  d(x)^{p(1+\theta)}
  \dnf(x)^{p+m+\theta-2}
  \big)^{\frac{1}{p+m-3}}
  \di\msr
  \\
  &\le
  \SpDim
  \int_{0}^{+\infty}
  \big(
  \tau^{p}
  \dnf(\tau)^{p+m+\theta-2}
  \big)^{\frac{1}{p+m-3}}
  \frac{\vol(\tau)}{\tau}
  \di\tau
  \\
  &=
  \SpDim
  \int_{0}^{+\infty}
  \big(
  \tau^{p}
  \dnf(\tau)
  \big)^{\frac{\theta}{p+m-3}}
  \fpsf(\tau)^{\frac{1}{p+m-3}}
  \frac{\di \tau)}{\tau}
  <
  +\infty
  \,,
  \end{split}
\end{equation*}
again by assumption \eqref{eq:ibl_n}, for a suitable choice of $\theta$.

On using \eqref{eq:ibl_ii} and the differential equation \eqref{eq:pde}, we obtain
\begin{multline}
  \label{eq:ibl_iii}
  \frac{1}{1+\theta}
  \der{}{t}
  \int_{M}
  \dnf
  u(t)^{1+\theta}
  \di\msr
  =
  -
  \Big(
  \frac{p}{p+m+\theta-2}
  \Big)^{p}
  \int_{M}
  \abs{\grad u^{\frac{p+m+\theta-2}{p}}}^{p}
  \di\msr
  \\
  \le
  -
  \gamma
  \Big(
  \int_{M}
  \dnf
  u(t)^{1+\theta}
  \di\msr
  \Big)^{\frac{p+m+\theta-2}{1+\theta}}
  \,,
\end{multline}
whence
\begin{equation}
  \label{eq:ibl_iv}
  \int_{M}
  \dnf
  u(t)^{1+\theta}
  \di\msr
  \le
  \gamma
  t^{-\frac{1+\theta}{p+m-3}}
  \,,
  \qquad
  t>0
  \,.
\end{equation}
However, \eqref{eq:ibl_i} and H\"older inequality yield
\begin{equation}
  \label{eq:ibl_v}
  \int_{t}^{t+1}
  \int_{M}
  \dnf
  u
  \di\msr
  \di\tau
  \le
  \Big(
  \int_{t}^{t+1}
  \int_{M}
  \abs{\grad u^{\frac{p+m+\theta-2}{p}}}^{p}
  \di\msr
  \di\tau
  \Big)^{\frac{1}{p+m+\theta-2}}
  \,.
\end{equation}
Again integrating the equality in \eqref{eq:ibl_iii}, we get
\begin{equation}
  \label{eq:ibl_vi}
  \int_{t}^{t+1}
  \int_{M}
  \abs{\grad u^{\frac{p+m+\theta-2}{p}}}^{p}
  \di\msr
  \di\tau
  \le
  \gamma
  \int_{M}
  \dnf
  u(t)^{1+\theta}
  \di\msr
  \,,
\end{equation}
and finally on combining \eqref{eq:ibl_iv} with \eqref{eq:ibl_vi} we obtain
\begin{equation}
  \label{eq:ibl_vii}
  \int_{M}
  \dnf
  u_{0}
  \di\msr
  =
  \int_{t}^{t+1}
  \int_{M}
  \dnf
  u(t)
  \di\msr
  \di\tau
  \le
  \gamma
  t^{-\frac{1+\theta}{(p+m+3)(p+m+\theta-2)}}
  \,,
\end{equation}
at least for all the times $t$ such that the compactness of the support  of $u(\tau)$ holds true over $(0,t+1)$. Indeed it is known, as we recalled in Remark~\ref{r:mass_subcr}, that such a property implies conservation of mass. But, \eqref{eq:ibl_vii} clearly is inconsistent as $t\to+\infty$, completing our argument.

\bibliographystyle{abbrv}
\bibliography{paraboli,pubblicazioni_andreucci}
\end{document}